\numberwithin{equation}{section}
\theoremstyle{plain}
\newtheorem{theorem}[subsubsection]{Theorem}
\newtheorem{lemma}[subsubsection]{Lemma}
\newtheorem{prop}[subsubsection]{Proposition}
\newtheorem{cor}[subsubsection]{Corollary}
\theoremstyle{definition}
\newtheorem{defn}[subsubsection]{Definition}
\newtheorem{remark}[subsubsection]{Remark}
\def\CC{\mathbb{C}}
\def\NN{\mathbb{N}}
\def\RR{\mathbb{R}}
\def\ZZ{\mathbb{Z}}
\newcommand\cA{\mathcal{A}}
\newcommand\cB{\mathcal{B}}
\newcommand\cE{\mathcal{E}}
\newcommand\cF{\mathcal{F}}
\newcommand\cG{\mathcal{G}}
\newcommand\cM{\mathcal{M}}
\newcommand\cO{\mathcal{O}}
\newcommand\cU{\mathcal{U}}
\newcommand\cW{\mathcal{W}}
\newcommand\cY{\mathcal{Y}}
\newcommand\frW{\mathfrak{W}}
\newcommand\frg{\mathfrak{g}}
\newcommand{\coker}{\textup{coker}}
\newcommand\id{\textup{id}}
\renewcommand{\Im}{\textup{Im}}
\newcommand{\Ind}{\textup{Ind}}
\newcommand\Rep{\textup{Rep}}
\newcommand\Span{\textup{Span}}
\newcommand\Hom{\textup{Hom}}
\newcommand{\Ext}{\textup{Ext}}
\newcommand\gl{\mathfrak{gl}}
\renewcommand\sl{\mathfrak{sl}}
\newcommand{\btimes}{\boxtimes}
\newcommand{\jiao}[1]{\langle{#1}\rangle}
\newcommand\quash[1]{}
\newcommand\xr{\xrightarrow}
\newcommand\one{\mathbf{1}}
\renewcommand\a\alpha
\renewcommand\b\beta
\newcommand\g\gamma
\renewcommand\d\delta
\newcommand\D\Delta
\newcommand{\om}{\omega}
\begin{document}
    \bibliographystyle{alpha}

\title{Tensor structure on the Kazhdan-Lusztig category \\ for affine $\mathfrak{gl}(1|1)$}
\author{Thomas Creutzig, Robert McRae and Jinwei Yang}
\date{}

\maketitle

\begin{abstract}
We show that the Kazhdan-Lusztig category $KL_k$ of level-$k$ finite-length modules with highest-weight composition factors for the affine Lie superalgebra $\widehat{\mathfrak{gl}(1|1)}$  has vertex algebraic braided tensor supercategory structure, and that its full subcategory $\mathcal{O}_k^{fin}$ of objects with semisimple Cartan subalgebra actions is a tensor subcategory. We show that every simple $\widehat{\mathfrak{gl}(1|1)}$-module in $KL_k$ has a projective cover in $\cO_k^{fin}$, and we determine all fusion rules involving simple and projective objects in $\cO_k^{fin}$. Then using Knizhnik-Zamolodchikov equations, we prove that $KL_k$ and $\mathcal{O}_k^{fin}$ are rigid. As an application of the tensor supercategory structure on $\mathcal{O}_k^{fin}$, we study certain module categories for the affine Lie superalgebra $\widehat{\sl(2|1)}$ at levels $1$ and $-\frac{1}{2}$. In particular, we obtain a tensor category of $\widehat{\sl(2|1)}$-modules at level $-\frac{1}{2}$ that includes relaxed highest-weight modules and their images under spectral flow.
\end{abstract}

\tableofcontents

\section{Introduction}

Affine Lie (super)algebras and their representations play meaningful roles in various areas of both mathematics and physics. Increasingly, the representation categories of interest are neither finite nor semisimple, and such categories are expected to give rise to interesting non-semisimple topological field theories and invariants of knots and links. They are also expected to relate to quantum group module categories via non-semisimple Kazhdan-Lusztig correspondences.

We are concerned  with representation categories of the affine Lie superalgebra $\widehat{\gl(1|1)}$ at level $k$, or equivalently of the affine vertex operator superalgebra $V_k(\gl(1|1))$, whose associated knot and link invariants are expected to be Alexander-Conway polynomials. In physics, $V_k(\gl(1|1))$ is the chiral algebra of a prototypical logarithmic conformal field theory, namely the WZW theory of the supergroup $GL(1|1)$. After describing our main results, we will explain these connections in detail.

\subsection{Results}

In this paper, we study two representation categories for $\widehat{\gl(1|1)}$. The first is the Kazhdan-Lusztig category $KL_k$ of level-$k$ finite-length modules with highest-weight composition factors. Equivalently, this is the category of finite-length grading-restricted generalized $V_k(\gl(1|1))$-modules, where ``generalized'' means that these modules decompose as direct sums of generalized eigenspaces for the Virasoro algebra zero-mode $L_0$. The second category we consider is the full subcategory $\cO_k^{fin}$ of $KL_k$ consisting of modules with semisimple Cartan subalgebra actions. Using the sufficient conditions given in \cite{CY} for the existence of the logarithmic tensor categories constructed in \cite{HLZ0}-\cite{HLZ8}, we prove in Theorems \ref{KLktencat} and \ref{thm:Okfin_tens_cat}  that both $KL_k$ and $\cO_k^{fin}$ have the vertex and braided tensor supercategory structures of \cite{HLZ0}-\cite{HLZ8} (see \cite{CKM} for a description of the supercategory structure in the superalgebra generality).

There are two classes of simple objects in $KL_k$ and $\cO_k^{fin}$. The typical modules $\widehat{V}^k_{n,e}$ for $n\in\CC$, $e/k\notin\ZZ$ are irreducible Verma $\widehat{\gl(1|1)}$-modules; the label $(n,e)$ indicates Cartan subalgebra eigenvalues on a highest-weight vector. The atypical modules $\widehat{A}_{n, \ell k}^k$ for $n\in\CC$, $\ell\in\ZZ$ are then the unique irreducible quotients of the corresponding reducible Verma modules. In Section \ref{subsec:proj}, we show that all irreducible modules have projective covers in $\cO_k^{fin}$: the typical modules are their own projective covers, while each $\widehat{A}^k_{n,\ell k}$ has a length-$4$ projective cover $\widehat{P}^k_{n,\ell k}$.

%

We next determine tensor products of irreducible modules in $KL_k$ and $\cO_k^{fin}$ in Theorems \ref{thm:atyp_atyp_fusion}, \ref{thm:atyp_typ_fusion}, and \ref{thm:typ_typ_fusion}:
\begin{theorem}\label{maintheorem1}
The following are the tensor products of simple $\widehat{\gl(1|1)}$-modules:
\begin{enumerate}
\item For $n, n'\in\CC$ and $\ell, \ell'\in\ZZ$,
\begin{equation*}
 \widehat{A}^k_{n,\ell k}\boxtimes\widehat{A}^k_{n',\ell' k} \cong\widehat{A}^k_{n+n'-\varepsilon(\ell,\ell'),(\ell+\ell')k},
\end{equation*}
where the scalar $\varepsilon(\ell,\ell')$ is defined in \eqref{defofvar} and Theorem \ref{thm:atyp_atyp_fusion} below.

\item For $n,n'\in\CC$, $\ell\in\ZZ$, and $e'/k\notin\ZZ$,
\begin{equation*}
 \widehat{A}^k_{n,\ell k}\boxtimes\widehat{V}^k_{n',e'}\cong\widehat{V}^k_{n+n'-\varepsilon(\ell),e'+\ell k},
\end{equation*}
where the scalar $\varepsilon(\ell)$ is defined in \eqref{defofvar} below.

\item For $n,n'\in\CC$ and $e/k, e'/k\notin\ZZ$,
 \begin{equation*}
  \widehat{V}^k_{n,e}\boxtimes\widehat{V}^k_{n',e'}\cong\left\lbrace\begin{array}{lll}
  \widehat{V}_{n+n'+\frac{1}{2},e+e'}^k\oplus\widehat{V}_{n+n'-\frac{1}{2},e+e'}^k & \text{if} & (e+e')/k\notin\ZZ \\
         \widehat{P}^k_{ n+n'+\varepsilon((e+e')/k),e+e'} & \text{if} & (e+e')/k\in\ZZ                                               \\                                                   \end{array}
\right. .
\end{equation*}

\end{enumerate}
\end{theorem}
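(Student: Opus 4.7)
The plan is to treat the three parts in sequence, using the simple-current structure of the atypical modules for Parts~(1) and~(2), and splitting Part~(3) into a semisimple subcase and a projective-cover subcase. In each part, the strategy is to identify the top level of the putative tensor product using the finite-dimensional $\gl(1|1)$-tensor rules ($A_n \otimes A_{n'} = A_{n+n'}$, $A_n \otimes V_{n',e'} = V_{n+n',e'}$, and $V_{n,e} \otimes V_{n',e'}$ either a sum of two typicals or a projective indecomposable), and then bootstrap from the top to the full affine tensor product via intertwining-operator constructions, character/composition-factor arguments, and the projective-cover structure established in Section~\ref{subsec:proj}.

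For Part~(1), I would first show that each atypical module $\widehat{A}^k_{n,\ell k}$ is invertible in the braided tensor supercategory, by constructing a nonzero intertwining operator of type $\binom{\widehat{A}^k_{n+n'-\varepsilon(\ell,\ell'),(\ell+\ell')k}}{\widehat{A}^k_{n,\ell k}\ \widehat{A}^k_{n',\ell' k}}$. This can be done by descending from an intertwiner between the corresponding generalized Verma modules via the natural coproduct on $\widehat{\gl(1|1)}$, and checking that the singular vectors of the reducible Vermas lie in its kernel. The shift $\varepsilon(\ell,\ell')$ is read off the normal-ordering relation between the Cartan currents and their induced action on the top level. Invertibility---witnessed by an explicit inverse atypical---then forces the tensor product to be simple with the asserted top-level weights.

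Part~(2) follows formally: since $\widehat{A}^k_{n,\ell k}$ is invertible, the functor $\widehat{A}^k_{n,\ell k} \boxtimes -$ is an autoequivalence, so $\widehat{A}^k_{n,\ell k} \boxtimes \widehat{V}^k_{n',e'}$ is simple; the hypothesis $e'/k \notin \ZZ$ implies $(e'+\ell k)/k \notin \ZZ$, so the result is typical, and its Cartan eigenvalues on the top identify it with $\widehat{V}^k_{n+n'-\varepsilon(\ell),e'+\ell k}$. For Part~(3), when $(e+e')/k \notin \ZZ$ both proposed summands are irreducible Vermas and hence projective in $\cO_k^{fin}$, so it suffices to compute composition factors: the finite $\gl(1|1)$-tensor rule applied at the top level gives the two typicals on the right-hand side, and $(e+e')/k \notin \ZZ$ precludes atypical composition factors on character grounds. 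Projectivity of the summands then forces the extension to split. When $(e+e')/k \in \ZZ$, the candidate summands become reducible Vermas and the tensor product grows to length four; to identify it with $\widehat{P}^k_{n+n'+\varepsilon((e+e')/k),e+e'}$ I would reduce to a base case $n=n'=0$ using Part~(2) (tensoring with atypicals shifts Cartan labels freely), and then exhibit an isomorphism via the universal property of the projective cover $\widehat{P}^k$ together with a matching count of composition factors, head, and socle.

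The main obstacle is this last step: showing that the length-four tensor product is exactly the indecomposable $\widehat{P}^k$, rather than an extension with the wrong internal structure or a decomposable sum, and correctly pinning down the scalar $\varepsilon((e+e')/k)$. This scalar encodes the self-extensions that appear when two typicals fuse into atypical composition factors, and requires analysis beyond the top-level finite tensor structure. The cleanest route is probably through the Knizhnik-Zamolodchikov equation for a four-point function with two typical insertions: solving the KZ equation and analyzing the monodromy around coincident points should simultaneously exhibit the projective-cover structure and yield the precise value of $\varepsilon((e+e')/k)$.
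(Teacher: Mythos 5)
Your overall strategy---exploit the simple-current structure of the atypicals, descend to finite-dimensional $\gl(1|1)$-module tensor products, and handle the typical-times-typical case via projectivity---matches the paper's high-level plan, but there are several places where the proposal either glosses over the key tool or proposes a genuinely different (and heavier) route, and one place where the reduction is set up incorrectly.

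The central tool that your proposal never clearly isolates is the bijection between $\gl(1|1)$-homomorphisms $M_1\otimes M_2\to M_3^*$ and intertwining operators of type $\binom{(\widehat M_3^k)'}{\widehat M_1^k\,\widehat M_2^k}$ (the paper's Theorem~\ref{thm:gen_fus_rules}, an affine-superalgebra case of the Frenkel--Zhu/Li/Huang--Yang correspondence). Without this you only have Proposition~\ref{prop:piY_surjective}, which gives a surjection \emph{from} $M_1\otimes M_2$ \emph{onto} the top of $W_1\boxtimes W_2$---an upper bound. Your Part~(3), first subcase, says ``the finite $\gl(1|1)$-tensor rule applied at the top level gives the two typicals,'' but a priori the top of $W_1\boxtimes W_2$ is only a \emph{quotient} of $V_{n,e}\otimes V_{n',e'}$; you need the converse direction (an IO coming \emph{out of} $W_1\boxtimes W_2$, built from a $\gl(1|1)$-hom into a dual) to show the top is all of it. The vague phrase ``precludes atypical composition factors on character grounds'' does not replace this: what the paper actually does is construct a surjection $\widehat V^k_{n,e}\boxtimes\widehat V^k_{n',e'}\twoheadrightarrow\widehat P^k$, use projectivity of $\widehat P^k$ to split it off as a direct summand, and then argue the complement has zero degree-$0$ piece and hence vanishes. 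That last step is precisely what the $\NN$-gradability structure buys and is not a character count.

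A few smaller points. In Part~(1), constructing a nonzero IO into the claimed atypical only gives a surjection from the tensor product; to conclude the fusion rule you need the source simple, and invoking ``invertibility witnessed by an explicit inverse atypical'' is circular until you have at least one base case in hand. The paper bootstraps: first the $\ell=0$ case (where the $\gl(1|1)$-top is one-dimensional, so the argument closes by itself and shows $\widehat A^k_{n,0}$ is a simple current), then $\ell,\ell'=\pm1$ (using the explicit singular vectors in $\widehat V^k_{0,\pm k}$, Lemma~\ref{lem:sing_vector}, to pin down the kernel of $\pi(\cY_\boxtimes)$), and only then associativity. Your Part~(2) similarly needs those singular-vector computations to determine \emph{which} of the two typical summands of $V_{0,\pm k}\otimes V_{n',e'}$ survives; ``its Cartan eigenvalues on the top identify it'' is not by itself a proof, since the top is a quotient of a two-dimensional family of possibilities. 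In Part~(3), second subcase, the reduction to $n=n'=0$ is not the useful one: tensoring with $\widehat A^k_{m,0}$ cannot change $e+e'$, so you stay stuck with $(e+e')/k\in\ZZ\setminus\{0\}$ where $\widehat P^k_{n+n'+\varepsilon(\ell),\ell k}$ is harder to access directly. The paper instead reduces to $e+e'=0$ by factoring $\widehat V^k_{n,e}\cong\widehat A^k_{\varepsilon(\ell),\ell k}\boxtimes\widehat V^k_{n,-e'}$ (via Part~(2)) and then uses that $\widehat A^k_{\varepsilon(\ell),\ell k}\boxtimes\widehat P^k_{n+n'}$ is a length-four projective surjecting onto $\widehat A^k_{n+n'+\varepsilon(\ell),\ell k}$, hence its projective cover. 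Finally, your fallback of solving KZ equations to pin down $\varepsilon$ is a genuinely different route from the paper's; the paper reserves KZ analysis entirely for the rigidity proof and never needs it for the fusion rules, which follow from the lighter-weight top-level plus projectivity machinery sketched above.
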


These fusion rules follow from the relationship (developed in \cite{FZ1, Li, FZ2, HY, MY}, among other references) between vertex algebraic intertwining operators among $\widehat{\gl(1|1)}$-modules and $\gl(1|1)$-homomorphisms between lowest-conformal-weight spaces of $\widehat{\gl(1|1)}$-modules. In particular, if $W_1$ and $W_2$ are two simple modules in $KL_k$, the canonical tensor product intertwining operator of type $\binom{W_1\boxtimes W_2}{W_1\,W_2}$ restricts to a $\gl(1|1)$-homomorphism onto the lowest-conformal-weight space of $W_1\boxtimes W_2$, which can then be induced to a  homomorphism from a generalized Verma $\widehat{\gl(1|1)}$-module into $W_1\boxtimes W_2$. On the other hand, we can get homomorphisms coming out of $W_1\boxtimes W_2$ if we can construct intertwining operators using suitable $\gl(1|1)$-module homomorphisms. Such considerations provide enough information to determine the tensor product modules in the above theorem; for tensor products involving the atypical irreducibles $\widehat{A}^k_{n,\ell k}$, $\ell\in\ZZ\setminus\lbrace 0\rbrace$, we also use information coming from explicit singular vectors in the reducible Verma modules $\widehat{V}^k_{0,\pm k}$.

%

Finally, in Theorem \ref{rigidityofcategoryO}, we prove that $KL_k$ and $\cO_k^{fin}$ are rigid  with duals given by contragredient modules; since $KL_k$ and $\cO_k^{fin}$ are also braided and have a natural twist isomorphism, this means they are braided ribbon tensor supercategories. To prove this result, we first show that simple modules in $KL_k$ are rigid, and then we use \cite[Thm. 4.4.1]{CMY2} to extend rigidity to all finite-length modules. Rigidity for the atypical irreducibles is easy: the fusion rules of Theorem \ref{maintheorem1} show that $\widehat{A}^k_{n,\ell k}$ is a simple current, so evaluation and coevaluation morphisms are isomorphisms.

To prove that each typical irreducible module $\widehat{V}^k_{n,e}$ is rigid, with contragredient dual $\widehat{V}^k_{-n,-e}$, we need to use explicit formulas for $4$-point correlation functions of the form
\begin{equation*}
 \phi(z)=\langle v_0,\cY_1(v_1,1)\cY_2(v_2, z)v_3\rangle,
\end{equation*}
where $\cY_1$ and $\cY_2$ are suitable intertwining operators involving $\widehat{V}^k_{n,e}$ and its contragredient, $v_1$ and $v_3$ are certain lowest-conformal-weight vectors in $\widehat{V}^k_{n,e}$, and $v_0, v_2$ are certain lowest-conformal-weight vectors in $\widehat{V}^k_{-n,-e}$. We use Knizhnik-Zamolodchikov equations to show in Theorem \ref{thm:diff_eq} that $\phi(z)$ satisfies the second-order regular-singular-point differential equation
\begin{align*}
 z(1-z)\phi''(z) &+\big[(4\Delta_{n,e}+1)-(8\Delta_{n,e}+1)z\big]\phi'(z)+4\Delta_{n,e}^2 z^{-1}\phi(z)\nonumber\\
 & +2\Delta_{n,e}(2\Delta_{n,e}-1)(1-z)^{-1}\phi(z)+\left[\left(\frac{e}{k}\right)^2-16\Delta_{n,e}^2\right]\phi(z)=0,
\end{align*}
where $\Delta_{n,e}=\frac{e}{k}\left(n+\frac{e}{2k}\right)$ is the lowest conformal weight of $\widehat{V}^k_{n,e}$. This differential equation can be solved explicitly in terms of hypergeometric functions, and we can then prove rigidity for $\widehat{V}^k_{n,e}$ with the help of well-known formulas relating series expansions of hypergeometric functions on different regions of $\CC\setminus\lbrace 0,1\rbrace$.

\subsection{On relaxed highest-weight modules}

The key result we use to establish the existence of vertex tensor category structure is \cite[Thm.~3.3.4]{CY}, which shows that $KL_k$ has the vertex algebraic braided tensor (super)category structure of \cite{HLZ0}-\cite{HLZ8} provided that every $C_1$-cofinite grading-restricted generalized $V_k(\gl(1|1))$-module has finite length. A similar result was proved earlier in \cite{CHY} for affine vertex operator algebras at admissible levels, plus rigidity in the simply-laced case \cite{C}.

However, the category of $C_1$-cofinite modules for affine vertex operator algebras at admissible level is rather small. Especially, it misses all relaxed highest-weight modules and their images under spectral flow: these modules have infinite-dimensional conformal weight spaces, and conformal weights need not be lower-bounded. This means that finiteness conditions assumed in \cite{HLZ0}-\cite{HLZ8} to prove existence of vertex tensor category structure do not hold, and one thus needs other strategies to obtain tensor categories that include relaxed highest-weight modules.

For example, a vertex operator (super)algebra $V$ might have a vertex operator subalgebra $U$ that has a module category $\mathcal C_U$ that admits vertex tensor category structure. If $V$ is an object in  a suitable completion of $\mathcal C_U$,
then one can use the theory of vertex operator (super)algebra extensions \cite{HKL, CKM} to obtain and study tensor categories of $V$-modules that lie in the completion of
$\mathcal C_U$.  For this reason, we have established that under reasonable assumptions, the completion of a vertex tensor category under direct limits inherits vertex tensor category structure \cite{CMY1}. In \cite{CMY2}, we applied this strategy to module categories for singlet vertex operator algebras that live in the direct limit completions of the $C_1$-cofinite module categories for the corresponding Virasoro vertex operator subalgebras. These Virasoro module categories have vertex tensor category structure by \cite{CJORY}.

%

In the present work, we apply the same idea to obtain vertex tensor categories of relaxed highest-weight modules for the simple affine vertex operator superalgebra of $\mathfrak{sl}(2|1)$ at level $-\frac{1}{2}$. The superalgebra $V_{-\frac{1}{2}}(\sl(2|1))$  is an infinite-order simple current extension of $V_1(\gl(1|1))$ \cite{CR1}, and this means that simple $V_{-\frac{1}{2}}(\sl(2|1))$-modules are certain infinite direct sums of $V_1(\gl(1|1))$-modules. As it turns out, we obtain simple $V_{-\frac{1}{2}}(\sl(2|1))$-modules with infinite-dimensional conformal weight spaces, so these are relaxed highest-weight modules. We have the following results:
\begin{enumerate}
\item Proposition \ref{prop:simple} characterizes all simple $V_{-\frac{1}{2}}(\sl(2|1))$-modules that are objects in the direct limit completion of $\cO_1^{fin}$ as inductions of simple $V_1(\gl(1|1))$-modules.
\item Equation \ref{eq:RHWch} illustrates the spectral-flow-twisted relaxed highest-weight property using the characters of the $V_{-\frac{1}{2}}(\sl(2|1))$-modules induced from typical $V_1(\gl(1|1))$-modules.
\item Proposition \ref{prop:proj} states that if a simple $V_1(\gl(1|1))$-module $S$ induces to a local $V_{-\frac{1}{2}}(\sl(2|1))$-module, then the induction of the projective cover $P_S$ of $S$  is the projective cover of the induction of $S$.
\item The induction functor is monoidal \cite[Sec. 2.7]{CKM} and preserves duals \cite[Sec. 2.8]{CKM}, so fusion rules and rigidity are inherited from  $\cO_1^{fin}$.
\end{enumerate}

Another example of a vertex operator superalgebra extension of $V_1(\gl(1|1))$ is a pair of $\beta\gamma$-ghosts tensored with two free fermions, which is just an additional order-$2$ simple current extension of $V_{-\frac{1}{2}}(\sl(2|1))$. The representation theory of $\beta\gamma$-ghosts was the subject of recent work by Allen and Wood \cite{AW}; in complete analogy to $V_{-\frac{1}{2}}(\sl(2|1))$, rigid vertex tensor category structure is  inherited from our tensor category for $V_1(\gl(1|1))$.

A third simple current extension of $V_1(\gl(1|1))$ yields $V_{1}(\sl(2|1))$, and we also study its module category obtained from  $\cO_1^{fin}$ in the last section of this work.
In this case, all $V_{1}(\sl(2|1))$-modules are ordinary, with finite-dimensional conformal weight spaces. This vertex superalgebra is however interesting from a number-theoretical point of view, as characters of atypical modules are mock modular forms \cite{BO}. If one includes Jacobi forms, then they are even mock Jacobi forms \cite{AC}. Note that the subject of \cite{AC} is a Verlinde formula associated to the mock modularity of various extensions of $V_1(\gl(1|1))$ that include $V_{1}(\sl(2|1))$.

\subsection{The WZW theory of $GL(1|1)$ and topological invariants}

In the early 1990s Rozansky and Saleur studied the Wess-Zumino-Witten theory of the Lie supergroup $GL(1|1)$ \cite{RS, RS3, RS2} in order to obtain invariants of $3$-manifolds and links. They were motivated by Witten's celebrated insight \cite{W} that the Jones polynomial arises from $SU(2)$ Chern-Simons theory, whose Hilbert space can be identified with the space of conformal blocks of the WZW theory of $SU(2)$ \cite{G}. In the $GL(1|1)$ analogue, Rozansky and Saleur  obtained Alexander-Conway  polynomials. Recall that any modular tensor category (that is, a non-degenerate semisimple finite braided ribbon tensor category) gives rise to invariants of compact $3$-manifolds \cite{Tu}. By now it is understood that non-semisimple non-finite categories can also give rise to $3$-manifold invariants via non-semisimple topological field theories (see for example \cite{CGP}). It is thus natural to expect that the invariants of Rozansky and Saleur can be reproduced from a topological field theory constructed from the tensor category $\cO_k^{fin}$ of modules for affine $\gl(1|1)$.


The WZW theory of $GL(1|1)$ was actually the first example of a logarithmic conformal field theory to be studied in detail. The term ``logarithmic'' here refers to logarithmic singularities in the correlation functions. Such singularities arise from non-semisimple action of the Virasoro zero-mode, and by now conformal field theories associated to non-semisimple module categories for vertex operator algebras are called logarithmic conformal field theories; see \cite{CR3} for an introduction.
The $GL(1|1)$ WZW theory has been further explored in the bulk  \cite{SS, CRo}  and boundary \cite{CQS, CS}. In particular, fusion rules were suggested by computations of correlation functions \cite{SS, CRo}, boundary states \cite{CQS}, the Verlinde formula \cite{CQS, CR2}, and the NGK algorithm \cite{CR2}. Our work here shows that these methods indeed predicted the correct fusion rules.

\subsection{Outlook}

The most popular vertex operator algebras with non-semisimple representation theory are probably the $(1, p)$ singlet and triplet algebras. In the first case $p=2$, the $(1, 2)$ singlet algebra is the Heisenberg coset of  $V_k(\gl(1|1))$, while the $(1, 2)$ triplet algebra is a simple current extension of the singlet. It is also the even subalgebra of a pair of symplectic fermions, which is the affine vertex operator superalgebra of $\mathfrak{psl}(1|1)$. The representation theory of the triplet algebra was first investigated by Feigin, Gainutdinov, Tipunin and Semikhatov \cite{FGST}: they explored the conjectural Kazhdan-Lusztig correspondence with the restricted quantum group of $\sl(2)$. This conjecture is correct on the level of linear categories \cite{NT},  and fusion rules for simple and projective modules also coincide \cite{TW, KS}. However the category of the quantum group is not braidable \cite{KS}, and it has only recently been realized that there is a quasi-Hopf algebra modification of the quantum group yielding braided tensor category structure \cite{CGR}. This quasi-Hopf modification is obtained by relating the restricted quantum group to the category of local modules for a simple current extension in the category of weight modules for the unrolled restricted quantum group. The latter category is conjecturally equivalent to the category of ordinary modules for the singlet vertex operator algebra \cite{CGP2, CMR}.

Thus clearly a central problem in this area is to prove the Kazhdan-Lusztig-type correspondences between triplet algebra categories and quasi-Hopf modifications of the restricted quantum groups. A variant of this conjecture is an equivalence of our tensor category $\cO_k^{fin}$ with a category of weight modules for $U_q(\gl(1|1))$. Very recently, Babichenko studied the connection between $U_q(\gl(1|1))$ and braided tensor categories associated to solutions of the Knizhnik-Zamolodchikov equation \cite{Ba}. As the original Kazhdan-Lusztig correspondence of \cite{KL1}-\cite{KL5} is a braided equivalence between categories of modules for an affine Lie algebra and for a corresponding quantum group, where the quantum group parameter $q$ is related to the level of the affine Lie algebra, it seems realistic that similar arguments can prove a correspondence between our $\cO_k^{fin}$ and a category of weight modules for $U_q(\gl(1|1))$. Using the relation between $V_k(\gl(1|1))$ and the $(1, 2)$ singlet and triplet algebras, the quantum group triplet and singlet correspondences should then follow.

The next natural question is whether one can understand higher-rank affine superalgebras and $W$-superalgebras. As a first step, one can consider simple current extensions of tensor products of $V_k(\gl(1|1))$, for example $V_1(\gl(n|n))$  as an extension of $n$ copies of $V_k(\gl(1|1))$. Here, one first needs to show that the category of ordinary modules for the tensor product of $V_k(\gl(1|1))$ with itself has vertex tensor category structure, that is, one needs an improvement of \cite[Thm. 5.2]{CKM2} that applies to non-semisimple module categories.

Much more generally, our results can be viewed as the simplest examples of exploiting trialities of $W$-superalgebras for understanding braided tensor categories. Namely, Feigin-Semikhatov triality \cite{FS} asserts that Heisenberg cosets of subregular $W$-algebras of $\sl(n)$ coincide with Heisenberg cosets of principal $W$-superalgebras of $\sl(n|1)$, and also with cosets by the even affine subalgebra of the affine vertex superalgebra of $\sl(n|1)$. These conjectures are proven in \cite{CGN, CL}, and there also the relation of the levels is stated precisely.  The $n=1$ version of this duality asserts that the Heisenberg coset of a pair of $\beta\gamma$-ghosts coincides with the $U(1)$-orbifold of the affine vertex superalgebra of $\mathfrak{psl}(1|1)$, also known as the symplectic fermion orbifold. Moreover, these conjectures can be improved to a Kazama-Suzuki-type duality that states that a Heisenberg coset of the subregular $W$-algebra tensored with a pair of fermions is the principal $W$-superalgebra, and conversely, a Heisenberg coset of the principal $W$-superalgebra tensored with a lattice vertex superalgebra is the subregular $W$-algebra \cite{CGN}. Here, the $n=1$ version is the relation between $\beta\gamma$-ghosts and $V_k(\gl(1|1))$; this duality was recently used in \cite{AP} to compute fusion rules.

 The $n=2$ version of this duality is the Kazama-Suzuki duality between the affine vertex algebra of $\sl(2)$ and the $N=2$ super Virasoro algebra; this was exploited by Feigin, Semikhatov, and Tipunin a while ago \cite{FST} and recently has received renewed attention \cite{CLRW, KoSa, Sa1, Sa2}.
All these works use the duality for understanding super Virasoro modules. In our opinion it is easier to prove vertex tensor category structure for $V_k(\gl(1|1))$ than for $\beta\gamma$-ghosts, so we hope that one can also establish and study vertex tensor category structure on certain module categories for the super Virasoro algebra, and then use the duality to understand dual categories of relaxed highest-weight modules of the affine vertex algebra of $\sl(2)$.

\vspace{5mm}

\noindent {\bf Acknowledgements}

\noindent TC acknowledges support from NSERC discovery grant RES0048511.

\section{Tensor supercategories of \texorpdfstring{$\widehat{\gl(1|1)}$}{gl(1|1)-hat}-modules}

In this section, we first review the basic representation theory of the Lie superalgebra $\gl(1|1)$ and its affinization $\widehat{\gl(1|1)}$, including the definitions of the categories $KL_k$ and $\cO_k^{fin}$ of $\widehat{\gl(1|1)}$-modules. We then show that $KL_k$ and $\cO_k^{fin}$ have vertex algebraic braided tensor supercategory structure, and we construct projective covers of irreducible modules in $\cO_k^{fin}$.

\subsection{Representations of \texorpdfstring{$\gl(1|1)$}{gl(1|1)}}\label{sec:gl(1|1)_reps}
The general linear superalgebra $\gl(1|1)$ consists of endomorphisms of the vector superspace $\CC^{1|1}$. It has basis
\[
N = \frac{1}{2}\begin{pmatrix}
1 &  0\\
0 & -1
\end{pmatrix},\;\;\;
E = \begin{pmatrix}
1 &  0\\
0 & 1
\end{pmatrix},\;\;\;
\psi^+ = \begin{pmatrix}
0 &  1\\
0 & 0
\end{pmatrix},\;\;\;
\psi^- = \begin{pmatrix}
0 &  0\\
1 & 0
\end{pmatrix},
\]
so that the even and odd subspaces of $\gl(1|1)$ are $\gl(1|1)_{\bar{0}} = \Span\{N, E\}$ and $\gl(1|1)_{\bar{1}} = \Span\{\psi^+, \psi^-\}$, respectively. The non-zero Lie superbrackets of basis elements are
\[
[N, \psi^{\pm}] = \pm \psi^{\pm},\;\;\; \{\psi^+, \psi^-\} = E.
\]
There is a nondegenerate even invariant supersymmetric bilinear form $\kappa(\cdot, \cdot)$ on $\gl(1|1)$ such that
\[
\kappa(N, E) = \kappa(E, N) = 1,\;\;\; \kappa(\psi^+, \psi^-) = -\kappa(\psi^-, \psi^+) = 1,
\]
with $\kappa$ vanishing on all other pairs of basis elements.
\begin{remark}\label{rem:bilinear}
There is a second invariant bilinear form $\kappa_2$ on $\gl(1|1)$ that vanishes on all pairs of basis elements except for $\kappa_2(N, N) = 1$. Moreover, $\gl(1|1)$ has automorphisms $\omega_{\lambda, \mu}$ for $\lambda \in \mathbb C$, $\mu \in \mathbb C \setminus \{0\}$ defined by
\begin{equation}\nonumber
\begin{split}
\omega_{\lambda, \mu}(N) = N + \lambda E, \qquad \omega_{\lambda, \mu}(\psi^\pm) = \mu\psi^\pm, \qquad \omega_{\lambda, \mu}(E) = \mu^2 E,
\end{split}
\end{equation}
so that every non-degenerate bilinear form of $\gl(1|1)$ is related to $\kappa$ by an automorphism. That is,
\[
\kappa(\omega_{\lambda, \mu}(a), \omega_{\lambda, \mu}(b)) = \mu^2 \kappa(a, b) + 2\lambda \kappa_2 (a, b)
\]
for any $a, b \in  \gl(1|1)$.
\end{remark}
Following the notation of \cite{CR2}, let $V_{n-\frac{1}{2},e}$ for $n,e\in\CC$ be the Verma module generated by a highest-weight vector $v$ such that
\[
N\cdot v = nv,\;\;\; E\cdot v = ev,\;\;\;\psi^+\cdot v=0.
\]
Since $\psi^{-}$ squares to zero in the universal enveloping algebra of $\gl(1|1)$, every Verma module has dimension $2$; thus $n$ is the average of the two $N$-eigenvalues of $V_{n,e}$.  The Verma module $V_{n, e}$ is irreducible if and only if $e \neq 0$. When $e = 0$, we denote the $1$-dimensional irreducible quotient of $V_{n,e}$ by $A_{n+\frac{1}{2}}$. The irreducibles with $e\neq 0$ are said to be {\em typical} and those with $e=0$ are said to be {\em atypical}. For each $n\in\CC$, there is a non-split exact sequence
\[
0 \rightarrow A_{n-\frac{1}{2}} \rightarrow V_{n,0} \rightarrow A_{n+\frac{1}{2}} \rightarrow 0.
\]
For $n\in\CC$, we also define the induced module $P_n=\cU(\gl(1|1))\otimes_{\cU(\gl(1|1)_{\bar{0}})} \CC v_n$, where $E\cdot v_n=0$ and $N\cdot v_n=nv_n$ (that is, $\CC v_n$ is the restriction of $A_n$ to $\gl(1|1)_{\bar{0}}$). The module $P_n$ is indecomposable but reducible and satisfies the non-split exact sequence
\begin{equation}\label{seq:Pn_ext}
0 \rightarrow V_{n+\frac{1}{2},0} \rightarrow P_n \rightarrow V_{n-\frac{1}{2},0} \rightarrow 0.
\end{equation}
It has Loewy diagram
\[
\begin{tikzpicture}[->,>=latex,scale=1.5]
\node (b1) at (1,0) {$A_{n}$};
\node (c1) at (-1.2, 1){$P_{n}$:};
   \node (a1) at (0,1) {$A_{n-1}$};
   \node (b2) at (2,1) {$A_{n+1}$};
    \node (a2) at (1,2) {$A_{n}$};
\draw[] (b1) -- node[left] {} (a1);
   \draw[] (b1) -- node[left] {} (b2);
    \draw[] (a1) -- node[left] {} (a2);
    \draw[] (b2) -- node[left] {} (a2);
\end{tikzpicture}
\]
and has basis $\{v_n, \psi^+ v_n, \psi^- v_n, \psi^+\psi^- v_n\}$.

Let $\cO$ be the category of finitely-generated $\gl(1|1)$-modules with semisimple $\gl(1|1)_{\bar{0}}$-actions (they have nilpotent $\gl(1|1)_{\bar{1}}$-actions automatically); see for example the exposition in \cite{B} for more details on this category. Every object in $\cO$ has finite length, and $\cO$ has enough projective objects. The typical irreducible modules $V_{n,e}$ for $n\in\CC$, $e\in\CC\setminus\lbrace 0\rbrace$ are their own projective covers, while $P_n$ for $n\in\CC$ is the projective cover of the atypical irreducible module $A_n$.


We remark that $\cO$ is more precisely a \textit{supercategory}: every object $M$ has a $\ZZ_2$-grading $M_{\bar{0}}\oplus M_{\bar{1}}$ such that the $\gl(1\vert 1)_{\bar{0}}$-action is even. The $\ZZ_2$-gradings on modules induce $\ZZ_2$-gradings on morphism spaces: a linear map $f: M\rightarrow N$ is a morphism of $\gl(1|1)$-modules of parity $\vert f\vert$ if
\begin{equation*}
 f(M_i)\subseteq N_{i+\vert f\vert}
\end{equation*}
for $i\in\ZZ_2$ and
\begin{equation*}
 a\cdot f(m) =(-1)^{\vert a\vert\vert f\vert} f(a\cdot m)
\end{equation*}
for $m\in M$ and homogeneous $a\in\gl(1|1)$. The Verma modules $V_{n,e}$, for example, have $\ZZ_2$-gradings such that $v$ is even and $\psi^- v$ is odd. We can reverse parities to obtain a different object $\Pi(V_{n,e})$ which is isomorphic to $V_{n,e}$ via an odd isomorphism.

\subsection{Representations of the affine Lie superalgebra $\widehat{\gl(1|1)}$}
The affine Lie superalgebra $\widehat{\gl(1|1)}$ associated with $\gl(1|1)$ and the bilinear form $\kappa(\cdot, \cdot)$ is the superspace $\gl(1|1)\otimes \CC[t,t^{-1}] \oplus \CC{\bf k}$, where $\CC[t,t^{-1}]$ and $\mathbf{k}$ are even, with bracket defined by
\[
[a \otimes t^r, b \otimes t^s] = [a, b]\otimes t^{r+s} + \kappa(a, b)r\delta_{r+s,0}{\bf k},
\]
\[
[a \otimes t^r, {\bf k}] = 0
\]
for $a, b \in \gl(1|1)$ and $r, s \in \ZZ$. The non-vanishing brackets of basis elements for $\widehat{\gl(1|1)}$ are
\[
[N_r, E_s] = r{\bf k}\delta_{r+s,0}, \;\;\; [N_r, \psi^{\pm}_s] = \pm \psi^{\pm}_{r+s}, \;\;\; \{\psi^+_r, \psi^-_s\} = E_{r+s}+r{\bf k}\delta_{r+s,0},
\]
where $a_r$ denotes $a\otimes t^r$ for $a \in \gl(1|1)$ and $r \in \ZZ$.

Given a $\gl(1|1)$-module $M$ and $k \in \CC$,  $M$ is a $\gl(1|1)\otimes \CC[t]\oplus \CC{\bf k}$-module with $\gl(1|1)\otimes t\CC[t]$ acting trivially and ${\bf k}$ acting as scalar multiplication by $k$. We then have the induced $\widehat{\gl(1|1)}$-module
\[
\widehat{M}^k = \cU(\widehat{\gl(1|1)})\otimes_{\cU(\gl(1|1)\otimes \CC[t]\oplus \CC{\bf k})}M.
\]
When $M$ is the trivial $\gl(1|1)$-module $A_0=\CC \one$, $\widehat{M}^k$ has a vertex superalgebra structure, which we denote by $V_k(\gl(1|1))$. For general $M$, the modules $\widehat{M}^k$ are also $V_k(\gl(1|1))$-modules, and $k$ is called the level of $\widehat{M}^k$.
\begin{remark}
The space of invariant bilinear forms of $\gl(1|1)$ is two-dimensional. \textit{A priori}, this means that one should consider a two-parameter family of affine vertex superalgebras $V_B(\gl(1|1))$ associated to $\gl(1|1)$, parameterized by bilinear forms $B$. As noted in Remark \ref{rem:bilinear} all non-degenerate bilinear forms are related by an automorphism of $\gl(1|1)$, and hence $V_B(\gl(1|1)) \cong V_1(\gl(1|1))$ if $B$ is non-degenerate. It thus suffices to restrict attention to $V_1(\gl(1|1))$. Since there is no additional complication for general non-zero $k$, we will stay with that notation.
\end{remark}

When $k\neq 0$, $V_k(\gl(1|1))$ is a vertex operator superalgebra with conformal element constructed in \cite{RS} using a modified Sugawara construction:
\begin{equation}
\om = \frac{1}{2k}(N_{-1}E_{-1}+E_{-1}N_{-1}-\psi^+_{-1}\psi^-_{-1}+\psi^-_{-1}\psi^+_{-1})\one + \frac{1}{2k^2}E_{-1}^2\one.
\end{equation}
Its associated vertex operator is given by
\begin{equation}
Y(\om, x) = \frac{1}{2k}:N(x)E(x)+E(x)N(x)-\psi^+(x)\psi^-(x)+\psi^-(x)\psi^+(x):+\frac{1}{2k^2}:E(x)E(x):.
\end{equation}
In particular,
\begin{align}
\label{ell-1}L_{-1} & = \frac{1}{k}\sum_{r\in \ZZ_{\geq 0}}(N_{-r-1}E_r +E_{-r-1}N_r-\psi_{-r-1}^+\psi_{r}^{-}+\psi_{-r-1}^{-}\psi_r^+)+\frac{1}{k^2}\sum_{r\in \ZZ_{\geq 0}}E_{-r-1}E_{r}, \\
\label{ell-0}L_{0} & = \frac{1}{k}\sum_{r\in \ZZ_{> 0}}(N_{-r}E_r +E_{-r}N_r-\psi_{-r}^+\psi_{r}^{-}+\psi_{-r}^{-}\psi_r^+)+\frac{1}{k^2}\sum_{r\in \ZZ_{> 0}}E_{-r}E_{r}\nonumber\\
& \qquad + \frac{1}{k}(N_0E_0 - \psi_{0}^+\psi_{0}^{-}) + \frac{1}{2k}E_0 + \frac{1}{2k^2}E_0^2.
\end{align}

Now we introduce two representation categories of $V_k(\gl(1|1))$ of main interest to us. For a precise definition of the supercategory of (grading-restricted generalized) modules for a vertex operator superalgebra, see \cite[Def. 3.1]{CKM}.
\begin{defn} ${}$
\begin{enumerate}
\item The Kazhdan-Lusztig category $KL_k$ is the supercategory of finite-length grading-restricted generalized $V_k(\gl(1|1))$-modules.

\item The supercategory $\cO_k^{fin}$ is the full subcategory of $KL_k$ consisting of modules on which $N_0$ and $E_0$ act semisimply.
\end{enumerate}
\end{defn}

We start by describing the simple objects in $KL_k$. First, just as for affine vertex operator algebras (see for example \cite[Sec. 6.2]{LL}), every irreducible (grading-restricted) $V_k(\gl(1|1))$-module is generated by its lowest conformal weight space, which must be an irreducible finite-dimensional $\gl(1|1)$-module. Thus every simple module in $KL_k$ is a quotient of a generalized Verma $\widehat{\gl(1|1)}$-module $\widehat{V}^k_{n,e}$ for $e\neq 0$ or $\widehat{A}^k_{n,0}$. The structure of these generalized Verma $\widehat{\gl(1|1)}$-modules was determined in \cite[Sec. 3.2]{CR2}. In particular,
\begin{enumerate}
 \item  The generalized Verma module $\widehat{V}_{n, e}^k$ is irreducible if and only if $e/k\notin\ZZ$.

 \item When $e=0$, the unique irreducible quotient of $\widehat{V}_{n,0}^k$ is $\widehat{A}_{n+\frac{1}{2},0}^k$, and there is a non-split exact sequence
 \begin{equation}\label{extatypical}
0 \rightarrow \widehat{A}_{n-\frac{1}{2}, 0}^k \rightarrow \widehat{V}_{n,0}^k \rightarrow \widehat{A}_{n+\frac{1}{2},0}^k \rightarrow 0.
\end{equation}

\item When $e/k\in\ZZ\setminus\lbrace 0\rbrace$, we use $\widehat{A}_{n,e}^k$ to denote the unique irreducible quotient of $\widehat{V}_{n,e}^k$, and there are non-split exact sequences
\begin{eqnarray}\label{exactsequencesimple}
&& 0 \rightarrow \widehat{A}_{n+1, e}^k \rightarrow \widehat{V}_{n,e}^k \rightarrow \widehat{A}_{n,e}^k \rightarrow 0\;\;\; (e/k = 1, 2, 3, \dots),\nonumber\\
&& 0 \rightarrow \widehat{A}_{n-1, e}^k \rightarrow \widehat{V}_{n,e}^k \rightarrow \widehat{A}_{n,e}^k \rightarrow 0\;\;\; (e/k = -1, -2, -3, \dots).
\end{eqnarray}
\end{enumerate}
Thus $KL_k$ has two classes of simple objects: the \textit{typical} irreducible modules $\widehat{V}_{n,e}^k$ for $n\in\CC$, $e/k\notin\ZZ$, and the \textit{atypical} irreducible modules $\widehat{A}_{n,e}$ for $n\in\CC$, $e/k\in\ZZ$. Note that as a module for itself, $V_k(\gl(1|1))$ is isomorphic to $\widehat{A}_{0,0}^k$; in particular, $V_k(\gl(1|1))$ is a simple vertex operator superalgebra. All simple modules in $KL_k$ are $C_1$-cofinite,
and they are also objects of $\cO_k^{fin}$.

We now discuss some properties of $\cO_k^{fin}$ that we will use in the following sections. First, since $E_0$ is central in $\widehat{\gl(1|1)}$ and acts semisimply on modules in $\cO_k^{fin}$, we have a direct sum decomposition
\[
\cO_k^{fin} = \bigoplus_{e \in \CC}(\cO_k^{fin})_{e}
\]
where $(\cO_k^{fin})_e$ is the full subcategory of modules in $\cO_k^{fin}$ on which $E_0$ acts by the scalar multiplication $e$. In particular, there are no non-zero morphisms between modules in $(\cO_k^{fin})_{e_1}$ and $(\cO_k^{fin})_{e_2}$ when $e_1\neq e_2$.

Now suppose $W$ is a module in $(\cO_k^{fin})_e$ for some $e\in\CC$. If $v_{n,e}\in W$ is some highest-weight vector for $\widehat{\gl(1|1)}$ with $N_0$-eigenvalue $n+\frac{1}{2}$ (in particular $\psi^+_0\cdot v_{n,e}=0$), then \eqref{ell-0} shows that $v_{n,e}$ is an $L_0$-eigenvector with conformal weight
\begin{equation}\label{eqn:Delta_n,e}
 \Delta_{n,e}=\frac{e}{k}\left(n+\frac{e}{2k}\right).
\end{equation}
That is, the minimal conformal weights of (grading-restricted) modules in $(\mathcal{O}_k^{fin})_e$ have the form $\Delta_{n,e}$ for $n\in\CC$; more precisely, the conformal weights of such modules lie in $\cup_i (\Delta_{n_i,e}+\NN)$ for finitely many $n_i\in\CC$. In the case $e=0$, this means:
\begin{lemma}\label{lem:usefullemma_1}
 For any non-zero module $W$ in $(\cO_k^{fin})_0$:
 \begin{enumerate}
  \item The unique minimal conformal weight space of $W$ is $W_{[0]}$.
  \item $W$ is generated by $W_{[0]}$ as a $\widehat{\gl(1|1)}$-module.
 \end{enumerate}
\end{lemma}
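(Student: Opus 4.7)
My plan is to use the explicit Sugawara formula \eqref{ell-0} together with the vanishing of $E_0$ on $W$ to force any ``highest-weight'' vector in $W$ to have $L_0$-eigenvalue zero; then both parts of the lemma will follow.

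First I would establish that every module $W$ in $(\cO_k^{fin})_0$ has all generalized $L_0$-eigenvalues in $\NN$. Since $W$ has finite length in $(\cO_k^{fin})_0$, its composition factors are atypical simples $\widehat{A}^k_{n,0}$. Each such simple is generated by its $1$-dimensional lowest-weight space $A_n$, on which $E_0$ and $\psi_0^\pm$ act as zero, so \eqref{ell-0} gives $L_0 = 0$ on $A_n$, and applying negative modes only raises conformal weight by positive integers. The observation that any generalized $L_0$-eigenvalue of an extension is a generalized $L_0$-eigenvalue of either the sub or the quotient then propagates this to $W$.

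For part (1), suppose for contradiction that the minimal conformal weight $n \in \NN$ of $W$ satisfies $n \geq 1$, and pick any nonzero $v \in W_{[n]}$. By minimality, $a_m v \in W_{[n-m]} = 0$ for every $a \in \gl(1|1)$ and every $m > 0$. The finite-dimensional $\gl(1|1)$-submodule $\gl(1|1)\cdot v \subseteq W_{[n]}$ has $E_0$ acting as zero, so decomposes into $N_0$-eigenspaces; since $\psi_0^+$ is odd and hence nilpotent, I can find a nonzero $N_0$-eigenvector $v'$ in the kernel of $\psi_0^+$. Then $a_m v' = 0$ for all $m > 0$, $\psi_0^+ v' = 0$, and $E_0 v' = 0$, so formula \eqref{ell-0} collapses to
\begin{equation*}
L_0 v' = -\tfrac{1}{k}\,\psi_0^+\psi_0^- v' = -\tfrac{1}{k}\bigl(E_0 - \psi_0^-\psi_0^+\bigr) v' = 0.
\end{equation*}
Since $v' \in W_{[n]}$ satisfies $(L_0 - n)^N v' = 0$ for some $N \geq 1$, combining with $L_0 v' = 0$ gives $(-n)^N v' = 0$, and hence $n = 0$, contradicting $n \geq 1$.

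For part (2), let $\widetilde{W} = \cU(\widehat{\gl(1|1)})\cdot W_{[0]}$. Because negative modes only raise conformal weight, no elements of negative conformal weight appear, so $\widetilde{W}_{[0]} = W_{[0]}$ and consequently $(W/\widetilde{W})_{[0]} = 0$. But $W/\widetilde{W}$ is still an object of $(\cO_k^{fin})_0$, so applying part (1) to it forces $W/\widetilde{W} = 0$. The main obstacle I anticipate is the preliminary step of showing that generalized $L_0$-eigenvalues of $W$ lie in $\NN$: everything else hinges on the clean Sugawara calculation above, but this bookkeeping with generalized eigenspaces under extensions needs to be done carefully.
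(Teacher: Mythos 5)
Your argument is correct and follows essentially the same route as the paper: the paper's proof of part (1) invokes the discussion surrounding equation \eqref{eqn:Delta_n,e} (that conformal weights of modules in $(\cO_k^{fin})_e$ lie in $\cup_i(\Delta_{n_i,e}+\NN)$ and that the minimal ones arise from highest-weight vectors, computed via the Sugawara formula \eqref{ell-0}), which is exactly the combination of your preliminary $\NN$-grading step and your highest-weight-vector calculation; and part (2) is word-for-word the same quotient argument. The only difference is that you spell out the $\NN$-grading via composition series and the existence of a $\psi_0^+$-annihilated $N_0$-eigenvector more explicitly than the paper, which simply treats these as established in the preceding paragraph.
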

\begin{proof}
 The first part is immediate from the $e=0$ case of \eqref{eqn:Delta_n,e}. For the second part, $W/\widehat{\gl(1|1)}\cdot W_{[0]}$ is a module in $(\cO_k^{fin})_0$ with vanishing conformal weight-$0$ space, so the first part implies $W/\widehat{\gl(1|1)}\cdot W_{[0]}=0$.
\end{proof}

Now recall that the \textit{contragredient} of a generalized module $W=\bigoplus_{h\in\CC} W_{[h]}$ for a vertex operator (super)algebra $V$ is a module structure on the graded dual $W'=\bigoplus_{h\in\CC} W_{[h]}^*$. For superalgebras, $W'$ has a $\ZZ_2$-grading given by $W'_{i} =(W_i)'$ for $i\in\ZZ_2$, and we define the vertex operator $Y_{W'}$ by
\begin{equation}\label{eqn:contra_vrtx_op}
 \langle Y_{W'}(v,x)w',w\rangle =(-1)^{\vert v\vert\vert w'\vert}\langle w', Y_W(e^{xL_1}(-x^{-2})^{L_0} v, x^{-1})w\rangle,
\end{equation}
following the convention of \cite{CKM}. (Note that for vertex operator superalgebras, there are several different but equivalent definitions in the literature for the contragredient module vertex operator; see \cite[Rem. 3.5]{CKM}.) The contragredient of an irreducible $V$-module is irreducible (see for example \cite[Prop. 5.3.2]{FHL}), and for a $V$-module $W$, there is a natural even isomorphism $\delta_W: W\rightarrow W''$ defined by
\begin{equation*}
 \langle\delta_W(w),w'\rangle =(-1)^{\vert w\vert\vert w'\vert}\langle w',w\rangle
\end{equation*}
for $w\in W$, $w'\in W'$. Moreover, taking contragredients defines an even contravariant functor on $V$-modules, with the contragredient of a parity-homogeneous morphism $f: W_1\rightarrow W_2$ defined by
\begin{equation*}
 \langle f'(w_2'), w_1\rangle =(-1)^{\vert f\vert\vert w_2'\vert}\langle w_2',f(w_1)\rangle
\end{equation*}
for $w_1\in W_1$ and parity-homogeneous $w_2'\in W_2'$. It is straightforward to show that the contragredient functor preserves exactness of sequences involving parity-homogeneous homomorphisms.

For $V_k(\gl(1|1))$, \eqref{eqn:contra_vrtx_op} implies that the actions of $\widehat{\gl(1|1)}$ and the Virasoro algebra on a contragredient module $W'$ are given by
\begin{align}\label{eqn:aff_contra}
 & \langle a_r' w', w\rangle  =-(-1)^{\vert a\vert\vert w'\vert}\langle w', a_{-r} w\rangle\\
 & \langle L'(n)w',w\rangle  =\langle w',L(-n)w\rangle\nonumber
\end{align}
for $a\in\gl(1|1)$, $r,n\in\ZZ$, $w'\in W'$, and $w\in W$. In particular, the lowest conformal weight space of the contragredient $(\widehat{M}^k)'$ of a generalized Verma module is the $\gl(1|1)$-module $M^*$. Using this observation, we can determine the contragredients of many modules in $\cO_k^{fin}$:
\begin{prop}\label{prop:contra}
 Contragredients of $V_k(\gl(1|1))$-modules are as follows:
 \begin{enumerate}
  \item $(\widehat{A}^k_{n,\ell k})'\cong\widehat{A}^k_{-n,-\ell k}$ for $n\in\CC$ and $\ell\in\ZZ$.
  \item $(\widehat{V}^k_{n,e})'\cong\widehat{V}^k_{-n,-e}$ for $n\in\CC$ and $e/k\notin\ZZ$.
  \item $(\widehat{P}^k_{n})'\cong\widehat{P}^k_{-n}$ for $n\in\CC$.
 \end{enumerate}
\end{prop}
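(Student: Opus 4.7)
The plan is to identify each contragredient via its lowest conformal weight space. The key observation, derived from \eqref{eqn:aff_contra}, is that for an induced module $\widehat{M}^k$ the lowest conformal weight space of $(\widehat{M}^k)'$ is $M^*$ as a $\gl(1|1)$-module, and it is annihilated by all positive modes $a'_r$ ($r>0$), since those modes lower conformal weight by $r$.

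For parts (1) and (2), I would first compute the $\gl(1|1)$-duals of the relevant inducing modules. A direct calculation with zero modes yields $A_n^* \cong A_{-n}$; and in $V_{n,e}^*$ the dual vector $(\psi^- v)^*$ is annihilated by $\psi^+$ and has $N$-eigenvalue $-n+\tfrac{1}{2}$ and $E$-eigenvalue $-e$, so it generates a Verma isomorphic to $V_{-n,-e}$. Since $\widehat{V}^k_{n,e}$ (for $e/k\notin\ZZ$) and $\widehat{A}^k_{n,\ell k}$ are simple, so are their contragredients. By the universal property of generalized Verma modules applied to the inclusion of the dual lowest weight space, there is a nonzero (hence surjective, by irreducibility of the target) $\widehat{\gl(1|1)}$-module map from $\widehat{V}^k_{-n,-e}$, respectively from the Verma covering the dual lowest weight space ($\widehat{V}^k_{-n-1/2,0}$ if $\ell=0$, $\widehat{V}^k_{-n,-\ell k}$ if $\ell\neq 0$), into the contragredient. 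In case (2) this is automatically an isomorphism by irreducibility of both sides; in case (1) it factors through the unique irreducible quotient $\widehat{A}^k_{-n,-\ell k}$, yielding the desired isomorphism.

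For part (3), I would leverage the Loewy structure of $\widehat{P}^k_n$ established in Section \ref{subsec:proj}: a length-$4$ indecomposable with simple top and simple socle both $\widehat{A}^k_{n,0}$ and middle composition factors $\widehat{A}^k_{n\pm 1,0}$. The contragredient functor is exact on parity-homogeneous morphisms, preserves length and indecomposability (the latter via the natural isomorphism $\delta_W: W\isom W''$, well-defined on grading-restricted modules in $\cO_k^{fin}$), and sends $\widehat{A}^k_{m,0}$ to $\widehat{A}^k_{-m,0}$ by part (1). Thus $(\widehat{P}^k_n)'$ is a length-$4$ indecomposable whose top and socle are both $\widehat{A}^k_{-n,0}$ and whose middle factors are $\widehat{A}^k_{-n\mp 1,0}$, matching the Loewy structure of $\widehat{P}^k_{-n}$. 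By projectivity of $\widehat{P}^k_{-n}$, the quotient map of $(\widehat{P}^k_n)'$ onto its simple top lifts to a map $\widehat{P}^k_{-n}\to (\widehat{P}^k_n)'$; this lift is surjective by Nakayama for finite-length indecomposables with simple top, and therefore an isomorphism by equality of length.

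The main obstacle is part (3), which hinges on the explicit socle and Loewy diagram of $\widehat{P}^k_n$ furnished by its construction in Section \ref{subsec:proj}; once that structure is in hand, the identification $(\widehat{P}^k_n)'\cong\widehat{P}^k_{-n}$ follows from standard bookkeeping with contragredients. The computations in parts (1) and (2) are essentially applications of the zero-mode formula \eqref{eqn:aff_contra} together with the irreducibility of simple modules under contragredient duality.
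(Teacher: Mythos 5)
Your argument for parts (1) and (2) matches the paper's: identify the lowest conformal weight space of the contragredient as the $\gl(1|1)$-dual of the original, then use simplicity of the contragredient and the universal property of generalized Verma modules to pin it down. This is essentially verbatim what the paper does.

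For part (3) you take a genuinely different route. The paper lifts the even category-$\cO$ isomorphism $P_n^* \cong P_{-n}$ to an even homomorphism $f: \widehat{P}^k_{-n}\to(\widehat{P}^k_n)'$ via the universal property of generalized Verma modules, and then invokes Lemma \ref{lem:usefullemma_1} (nonzero objects of $(\cO_k^{fin})_0$ have nonzero weight-$0$ space) to kill $\ker f$ and $\coker f$. You instead argue from the Loewy structure of $\widehat{P}^k_n$ (Proposition \ref{prop:Pnk_Loewy}), exactness of the contragredient functor, part (1), and the projectivity of $\widehat{P}^k_{-n}$, concluding via Nakayama and a length count. Both arguments are correct. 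The trade-off: the paper's proof needs only Lemma \ref{lem:usefullemma_1} and the universal property of induced modules, both available at the point where Proposition \ref{prop:contra} sits; your proof additionally relies on the Loewy diagram and on projectivity of $\widehat{P}^k_{-n}$ in $\cO_k^{fin}$, which are only established later in Section \ref{subsec:proj}. That is not a circularity --- those later results do not depend on Proposition \ref{prop:contra} --- but it does mean your argument would require reorganizing the exposition or at least a forward reference. As a small stylistic point, you also don't strictly need the full Loewy-diagram match: once you know $(\widehat{P}^k_n)'$ has length $4$ and simple top $\widehat{A}^k_{-n,0}$ (the latter from the socle of $\widehat{P}^k_n$ being simple), projectivity plus Nakayama plus length already forces the isomorphism, so the middle-factor bookkeeping is dispensable.
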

\begin{proof}
 For $W$ an irreducible $V_k(\gl(1|1))$-module, $W'$ is an irreducible $V_k(\gl(1|1))$-module whose lowest conformal weight space is the $\gl(1|1)$-dual of the lowest conformal weight space of $W$. Thus the first two cases of the proposition follow from the identities $A_n^*\cong A_{-n}$ for $n\in\CC$ and $V_{n,e}^*\cong V_{-n,-e}$ for $e\neq 0$ (by an odd isomorphism).

 For the third case, we use the even isomorphism $P_n^*\cong P_{-n}$ in category $\cO$. By the universal property of induced $\widehat{\gl(1|1)}$-modules, this isomorphism extends to an even homomorphism $f: \widehat{P}^k_{-n}\rightarrow(\widehat{P}^k_n)'$ which is an isomorphism on lowest conformal weight spaces. Since $\ker f$ and $\coker f$ are objects of $(\cO_k^{fin})_0$ with $(\ker f)_{[0]}=0=(\coker f)_{[0]}$, Lemma \ref{lem:usefullemma_1} implies that $\ker f=0=\coker f$, so that $f$ is an isomorphism.
\end{proof}

%
%

\subsection{Tensor supercategory structure}

We now establish vertex and braided tensor supercategory structure on $KL_k$ and on its subcategory $\cO_k^{fin}$. The key result we use is \cite[Thm.~3.3.4]{CY}, which shows that $KL_k$ has the vertex algebraic braided tensor (super)category structure of \cite{HLZ0}-\cite{HLZ8} provided that every lower-bounded $C_1$-cofinite $V_k(\gl(1|1))$-module is a finite-length generalized module. We remark that although the results of \cite{HLZ0}-\cite{HLZ8} and \cite{CY} on vertex tensor category structure are proved for vertex operator algebras, careful examination of the proofs shows that the results generalize to module supercategories for superalgebras. For a description of the vertex and braided tensor supercategory structures on a module supercategory for a vertex operator superalgebra, see \cite[Sec. 3.3]{CKM}.

%
\begin{theorem}\label{KLktencat}
The supercategory $KL_k$ has vertex algebraic braided tensor supercategory structure.
\end{theorem}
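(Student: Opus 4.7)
The strategy is to apply \cite[Thm.~3.3.4]{CY}, which reduces the existence of the vertex and braided tensor supercategory structure on $KL_k$ to a single finite-length condition: every $C_1$-cofinite grading-restricted generalized $V_k(\gl(1|1))$-module $W$ must have finite length. The entire task therefore becomes verifying this finite-length property.

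The first main step is to exploit the centrality of $E_0$ to reduce to the case when $W$ has a single generalized $E_0$-eigenvalue. Since $E_0$ is central in $\widehat{\gl(1|1)}$, it preserves $C_1(W)$ and descends to the finite-dimensional quotient $W/C_1(W)$, where it has only finitely many generalized eigenvalues $e_1,\dots,e_r$. Using that $V_k(\gl(1|1))$ is strongly generated in conformal weight $1$ by $\gl(1|1)$, I claim that any graded lift $U\subseteq W$ of $W/C_1(W)$ generates $W$ as a $\widehat{\gl(1|1)}$-module: setting $V:=\widehat{\gl(1|1)}\cdot U$, one has $(W/V)/C_1(W/V)=0$ by construction, which for a lower-bounded module forces $W/V=0$ (otherwise the lowest conformal weight space of $W/V$, being disjoint from $C_1(W/V)$, would survive in the $C_1$-quotient). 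Since $E_0$ commutes with every mode of $\widehat{\gl(1|1)}$, its generalized eigenvalues on $W=\widehat{\gl(1|1)}\cdot U$ agree with those on the finite-dimensional $U$; hence $W=\bigoplus_{i=1}^{r}W^{(e_i)}$ is a finite direct sum of $C_1$-cofinite $\widehat{\gl(1|1)}$-submodules, and I may assume $W=W^{(e)}$ for a fixed $e$.

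Next I construct a finite composition series of $W$ by iteratively peeling off irreducible submodules. The lowest conformal weight space $W_{[h_0]}$ is finite-dimensional and annihilated by all modes $a_n$ with $a\in\gl(1|1)$ and $n>0$ (such modes strictly lower conformal weight). Because $(\psi^+_0)^2=0$, the subspace $W_{[h_0]}\cap\ker\psi^+_0$ is nonzero; using the identity $\psi^+_0(N_0-n)^j=(N_0-n-1)^j\psi^+_0$ derived from $[N_0,\psi^+_0]=\psi^+_0$, elementary linear algebra on this finite-dimensional subspace yields a joint eigenvector $v$ of $N_0$ and $E_0$ still satisfying $\psi^+_0 v=0$. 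Such $v$ is a $\widehat{\gl(1|1)}$-highest-weight vector of some weight $(n,e)$, so $\langle v\rangle$ is a quotient of the generalized Verma module $\widehat{V}^k_{n,e}$; this Verma module has finite length by the structure described in \eqref{extatypical}--\eqref{exactsequencesimple}, and is itself irreducible when $e/k\notin\ZZ$. Consequently $W$ contains a nonzero irreducible submodule of type $\widehat{V}^k_{n',e}$ or $\widehat{A}^k_{n',e}$, and the quotient of $W$ by this submodule is again $C_1$-cofinite, grading-restricted, and supported in a single generalized $E_0$-eigenvalue, so the procedure iterates.

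To see that the iteration terminates I monitor the invariant $\dim W/C_1(W)\in\ZZ_{\geq 0}$. Every spanning element of $C_1(W)$ has conformal weight strictly greater than $h_0$, so $W_{[h_0]}\cap C_1(W)=0$, and the chosen highest-weight vector $v$ therefore has nonzero image in $W/C_1(W)$. Right exactness of the $C_1$-quotient functor then yields
\[
\dim(W/\langle v\rangle)/C_1(W/\langle v\rangle)=\dim W/C_1(W)-\dim\langle v\rangle/(\langle v\rangle\cap C_1(W))<\dim W/C_1(W),
\]
so the iteration terminates in finitely many steps, producing the desired finite composition series of $W$. The step I expect to be most technically delicate is the termination argument in this last paragraph, specifically the verification that $\dim W/C_1(W)$ behaves well under quotienting by the generated submodule; once this is secured, the rest of the proof is a fairly routine combination of highest-weight theory and the explicit structural description of $\widehat{\gl(1|1)}$-Verma modules recalled earlier in the section.
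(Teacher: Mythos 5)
Your proposal is correct in substance and follows the same overall strategy as the paper: both reduce the problem via \cite[Thm.~3.3.4]{CY} to showing that lower-bounded $C_1$-cofinite $V_k(\gl(1|1))$-modules have finite length, and both obtain this via a finite filtration whose subquotients are quotients of generalized Verma modules $\widehat{V}^k_{n,e}$, which have length at most $2$ by \eqref{extatypical}--\eqref{exactsequencesimple}. Where you differ is that the paper simply cites the proof of \cite[Thm.~3.3.5]{CY} for the existence of this filtration, whereas you re-derive it from scratch: you extract a highest-weight vector from the lowest conformal weight space using the nilpotency of $\psi^+_0$ and simultaneous (generalized) triangularization of the commuting operators $N_0,E_0$, and you use $\dim W/C_1(W)$ as a strictly decreasing nonnegative invariant to force termination. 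This is more self-contained but adds no new content beyond what is already contained in the cited theorem; the preliminary reduction to a single generalized $E_0$-eigenvalue is harmless but also unnecessary, since $N_0$ and $E_0$ commute on any finite-dimensional joint-invariant subspace.

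One small inconsistency worth fixing: in the iteration step you say to quotient $W$ by an \emph{irreducible} submodule contained in $\langle v\rangle$, but in the termination step you compute the drop in $\dim W/C_1(W)$ for the quotient by $\langle v\rangle$ itself. These are not interchangeable: when $\langle v\rangle$ is a reducible quotient of $\widehat{V}^k_{n,\ell k}$ with $\ell\neq 0$, its irreducible socle (e.g.\ $\widehat{A}^k_{n+1,\ell k}$ for $\ell\geq 1$) lives entirely in conformal weights $\geq h_0+1$, so it may be contained in $C_1(W)$ and quotienting by it need not decrease the invariant. The argument is fine as long as you consistently quotient by the finite-length submodule $\langle v\rangle$; this produces a finite filtration by finite-length pieces, which is all that is needed (it refines to a composition series). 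You should also note that $v$ may be chosen parity-homogeneous (because $W_{[h_0]}\cap\ker\psi^+_0$ is $\ZZ_2$-graded), so that $\langle v\rangle$ is a $\ZZ_2$-graded submodule; this is needed for the supercategory setting.
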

\begin{proof}
By \cite[Thm. 3.3.4]{CY}, we just need to show that every lower-bounded $C_1$-cofinite $V_k(\gl(1|1))$-module has finite length. From the proof of \cite[Thm.~3.3.5]{CY}, such a module $W$ has a finite filtration
\[
0 \subset W_1 \subset W_2 \subset \cdots \subset W_n \subset W_{n+1} = W
\]
such that each $W_i$ is a $\ZZ_2$-graded submodule of $W$ and such that each $W_{i+1}/W_i$ for $i = 1, \dots, n$ is a quotient of a Verma modules $\widehat{V}_{n,e}^k$ for some $n,e \in \CC$ (or its parity-reversed version). As the exact sequences \eqref{extatypical} and \eqref{exactsequencesimple} show that each $\widehat{V}_{n,e}^k$ has length at most $2$, $W$ has finite length.
\end{proof}


Now the subcategory $\cO_k^{fin}$ inherits vertex and braided tensor supercategory structure from $KL_k$, provided it is closed under the $P(z)$-tensor products of \cite{HLZ3}:
\begin{theorem}\label{thm:Okfin_tens_cat}
 The supercategory $\mathcal{O}_k^{fin}$ has vertex algebraic braided tensor supercategory structure.
\end{theorem}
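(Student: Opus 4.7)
My plan is to verify that $\cO_k^{fin}$ is closed under the $P(z)$-tensor product $\boxtimes_{P(z)}$ of $KL_k$. Once this is done, all of the vertex algebraic braided tensor supercategory structure (the unit $V_k(\gl(1|1))\cong\widehat{A}^k_{0,0}$, the associativity and braiding isomorphisms, etc.) will inherit from $KL_k$ via Theorem \ref{KLktencat}: $\cO_k^{fin}\subseteq KL_k$ is visibly closed under submodules, quotients, and finite direct sums, since each of these operations preserves semisimplicity of $N_0$ and $E_0$.

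To prove closure under $\boxtimes_{P(z)}$, I fix $W_1,W_2\in\cO_k^{fin}$, set $W_3 := W_1\boxtimes_{P(z)} W_2\in KL_k$, and let $\cY$ be the canonical logarithmic intertwining operator of type $\binom{W_3}{W_1\,W_2}$. The first step is to apply the commutator formula from the Jacobi identity for logarithmic intertwining operators, which gives
\[
 [a_0^{W_3},\,\cY(w_1,x)] \,=\, \cY(a_0 w_1, x), \qquad a\in\{N,E\},\; w_1\in W_1,
\]
because $N_{-1}\one$ and $E_{-1}\one$ are even degree-one vectors in $V_k(\gl(1|1))$ and $\binom{0}{i}=\delta_{i,0}$ kills all but the $i=0$ term. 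Thus $N_0$ and $E_0$ act on $\cY$ as even derivations; if $w_1,w_2$ are joint eigenvectors for $N_0$ and $E_0$ with eigenvalue pairs $(n_i,e_i)$, then every coefficient (in complex powers of $x$ and in $\log x$) of $\cY(w_1,x)w_2$ is a joint eigenvector with eigenvalues $(n_1+n_2,\,e_1+e_2)$. Since $W_1$ and $W_2$ are spanned by such joint eigenvectors, $\Im\,\cY\subseteq W_3$ is a sum of joint eigenspaces.

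The second step is to propagate semisimplicity from $\Im\,\cY$ to all of $W_3$. By the HLZ construction, $W_3$ is generated by $\Im\,\cY$ under the $\widehat{\gl(1|1)}$-action. Since $E_0$ is central in $\widehat{\gl(1|1)}$, its eigenvalues on $\Im\,\cY$ extend to the entire generated submodule. For $N_0$, the brackets $[N_0,N_r]=[N_0,E_r]=0$ and $[N_0,\psi^{\pm}_r]=\pm\psi^{\pm}_r$ show that each generator of $\widehat{\gl(1|1)}$ carries a joint $(N_0,E_0)$-eigenvector to another joint eigenvector (with $N_0$-eigenvalue shifted by $0$ or $\pm 1$). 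Hence $W_3$ is a sum of joint $N_0$-, $E_0$-eigenspaces and lies in $\cO_k^{fin}$.

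The most delicate ingredient is the precise form of the Jacobi identity for logarithmic intertwining operators in the super setting, which must be extracted from \cite{HLZ2}--\cite{HLZ8} (using the superalgebra conventions from \cite{CKM}). Once that formula is in place, everything reduces to the observation that $\binom{0}{i}=0$ for $i\ge 1$, so neither Virasoro primarity of $N$ nor the $L_1$-action plays any role.
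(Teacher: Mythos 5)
Your approach is essentially the same as the paper's: show that $\cO_k^{fin}$ is closed under the $P(z)$-tensor product of $KL_k$, and then observe that, being a full subcategory, $\cO_k^{fin}$ inherits the braided tensor supercategory structure because the $P(z)$-tensor product object in $KL_k$ automatically satisfies the universal property in $\cO_k^{fin}$ (equivalently, all structure morphisms take place between objects of $\cO_k^{fin}$). The only difference is in the precise formula you invoke: the paper applies the formula for the action of the modes $v_m$ on $w_1\boxtimes_{P(z)} w_2$ from \cite[Eqn.~4.34]{HLZ3}, while you use the $r=0$ commutator formula for the canonical logarithmic intertwining operator $\cY_\boxtimes$ of type $\binom{W_1\boxtimes_{P(z)} W_2}{W_1\,W_2}$; these are equivalent statements. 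One small inefficiency in your argument: your ``second step'' is unnecessary, because by \cite[Prop.~4.23]{HLZ3} the module $W_1\boxtimes_{P(z)} W_2$ is not merely \emph{generated by} the span of coefficients of $\cY_\boxtimes(w_1,x)w_2$ -- it is \emph{equal} to that span. Your first step already exhibits all these coefficients as joint $(N_0,E_0)$-eigenvectors when $w_1,w_2$ are joint eigenvectors, so $W_1\boxtimes_{P(z)} W_2$ is spanned by joint eigenvectors and there is nothing left to propagate. That said, the propagation argument you give is correct, so including it does no harm.
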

\begin{proof}
 Suppose $W_1$ and $W_2$ are any two objects of $\cO_k^{fin}$ and $(W_1\boxtimes_{P(z)} W_2,\boxtimes_{P(z)})$ is their $P(z)$-tensor product in $KL_k$, with $\boxtimes_{P(z)}$ the canonical even $P(z)$-intertwining map of type $\binom{W_1\boxtimes_{P(z)} W_2}{W_1\,W_2}$. It is clear that if $W_1\boxtimes_{P(z)} W_2$ is an object of $\cO_k^{fin}$, then $(W_1\boxtimes_{P(z)} W_2,\boxtimes_{P(z)})$ also satisfies the universal property of \cite[Def. 4.15]{HLZ3} for a $P(z)$-tensor product in $\cO_k^{fin}$. Thus the vertex algebraic braided tensor supercategory structure on $KL_k$ will restrict to such structure on $\cO_k^{fin}$.

 It remains to show that $E_0$ and $N_0$ act semisimply on $W_1\boxtimes_{P(z)} W_2$. By (the superalgebra generalization of) \cite[Eqn. 4.34]{HLZ3},
 \begin{equation*}
v_m(w_1\btimes_{P(z)}w_2) = (-1)^{\vert v\vert\vert w_1\vert}w_1\btimes_{P(z)}v_mw_2 + \sum_{i\geq 0}\binom{m}{i}z^{m-i}(v_iw_1)\btimes_{P(z)} w_2
\end{equation*}
for homogeneous $v\in V_k(\gl(1|1))$, $w_1\in W_1$, $w_2\in W_2$, and $m\in\ZZ$. In particular,
\[
X_0(w_1\btimes_{P(z)}w_2) = X_0w_1\btimes_{P(z)}w_2 + w_1 \btimes_{P(z)}X_0w_2
\]
for $X=E,N$. Thus because $W_1\boxtimes_{P(z)} W_2$ is spanned by projections of vectors $w_1\boxtimes_{P(z)} w_2$ to the conformal weight spaces of $W_1\boxtimes_{P(z)} W_2$ (see \cite[Prop. 4.23]{HLZ3}) and because $E_0$ and $N_0$ act semisimply on $W_1$ and $W_2$, we see that $W_1\boxtimes_{P(z)} W_2$ is an object of $\cO_k^{fin}$.
\end{proof}
\begin{remark}\label{rem:useful}
 The proof of the preceding theorem shows more specifically that if $W_1$ is an object of the subcategory $(\cO_k^{fin})_{e_1}$ for $e_1\in\CC$ and $W_2$ is an object of $(\cO_k^{fin})_{e_2}$, then $W_1\boxtimes_{P(z)} W_2$ is an object of $(\cO_k^{fin})_{e_1+e_2}$. We shall use this fact in computing fusion rules for modules in $\cO_k^{fin}$.
\end{remark}

In the braided tensor supercategory structure on $KL_k$ and $\cO_k^{fin}$, the tensor product bifunctor is given by the $P(1)$-tensor product $\boxtimes_{P(1)}$, which for simplicity we will denote by $\boxtimes$. For any two modules $W_1$, $W_2$ in $KL_k$ or $\cO_k^{fin}$, the canonical $P(1)$-intertwining map $\boxtimes$ corresponds to an even (logarithmic) intertwining operator $\cY_\boxtimes(\cdot,x)$ of type $\binom{W_1\boxtimes W_2}{W_1\,W_2}$ such that
\begin{equation*}
 \cY_\boxtimes(w_1,1)w_2 = w_1\boxtimes w_2
\end{equation*}
for $w_1\in W_1$ and $w_2\in W_2$; here the substitution $x\mapsto 1$ is realized using the branch $\log 1=0$ of logarithm. The pair $(W_1\boxtimes W_2,\cY_\boxtimes)$ then satisfies the following universal property: for any intertwining operator $\cY$ of type $\binom{W_3}{W_1\,W_2}$ with $W_3$ an object of $KL_k$, there is a unique $V_k(\gl(1|1))$-module homomorphism $f: W_1\boxtimes W_2\rightarrow W_3$ such that $f\circ\cY_\boxtimes=\cY$.

For more details on vertex algebraic braided tensor supercategory structure, in particular the unit and associativity isomorphisms which we will use briefly in Section \ref{subsec:rigidity}, see \cite{HLZ8} or the exposition in \cite[Sec. 3.3]{CKM}.

%
%
%

\subsection{Projective objects}\label{subsec:proj}

In this section, we show that every irreducible $V_k(\gl(1|1))$-module has a projective cover in $\cO_k^{fin}$, beginning with the atypical irreducible modules.

We will show below that for $n\in\CC$, the generalized Verma module $\widehat{P}_n^k$ is the projective cover of $\widehat{A}^k_{n,0}$. Since induction is an exact functor between module categories for Lie superalgebras (thanks to the Poincar\'{e}-Birkhoff-Witt Theorem for superalgebras), \eqref{seq:Pn_ext} induces to the exact sequence
\begin{equation}\label{proj1}
0 \rightarrow \widehat{V}_{n+\frac{1}{2}, 0}^k \rightarrow \widehat{P}_{n}^k \rightarrow \widehat{V}_{n-\frac{1}{2}, 0}^k \rightarrow 0
\end{equation}
for $n\in\CC$.

%
\begin{prop}\label{prop:Pnk_Loewy}
The Loewy diagram of $\widehat{P}_{n}^k$ is
\[
\begin{tikzpicture}[->,>=latex,scale=1.5]
\node (b1) at (1,0) {$\widehat{A}^k_{n, 0}$};
\node (c1) at (-1.2, 1){$\widehat{P}_{n}^k$:};
   \node (a1) at (0,1) {$\widehat{A}^k_{n+1, 0}$};
   \node (b2) at (2,1) {$\widehat{A}^k_{n-1, 0}.$};
    \node (a2) at (1,2) {$\widehat{A}^k_{n, 0}$};
\draw[] (b1) -- node[left] {} (a1);
   \draw[] (b1) -- node[left] {} (b2);
    \draw[] (a1) -- node[left] {} (a2);
    \draw[] (b2) -- node[left] {} (a2);
\end{tikzpicture}
\]
Moreover, $\widehat{P}_{n}^k$ is a logarithmic module with $L_0$-block size $2$.
\end{prop}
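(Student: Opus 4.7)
The plan is to exploit the exact sequences \eqref{proj1} and \eqref{extatypical} together with explicit computations in the degree-zero subspace $P_n \subset \widehat{P}_n^k$, whose basis $\{v_n,\psi^+_0 v_n,\psi^-_0 v_n,\psi^+_0\psi^-_0 v_n\}$ I will use throughout. From the two short exact sequences one immediately reads off the composition factors of $\widehat{P}_n^k$: the simple $\widehat{A}^k_{n,0}$ with multiplicity two, together with $\widehat{A}^k_{n+1,0}$ and $\widehat{A}^k_{n-1,0}$ each once. The composition $\widehat{P}_n^k \twoheadrightarrow \widehat{V}^k_{n-\frac{1}{2},0}\twoheadrightarrow \widehat{A}^k_{n,0}$ gives a non-zero map to $\widehat{A}^k_{n,0}$. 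To see this is the unique simple quotient, note that any simple quotient is some $\widehat{A}^k_{m,0}$ (since $E_0 = 0$ on $\widehat{P}_n^k$); morphisms preserve generalized $L_0$-eigenspaces, and $L_0$ acts semisimply on $\widehat{A}^k_{m,0}$, so the image of the cyclic generator $v_n$ must land in the $1$-dimensional lowest conformal weight space of $\widehat{A}^k_{m,0}$, forcing $m = n$ by matching $N_0$-eigenvalues. Hence $\widehat{P}_n^k$ is indecomposable with top $\widehat{A}^k_{n,0}$.

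Next, for the socle, any simple submodule of $\widehat{P}_n^k$ is some $\widehat{A}^k_{m,0}$ generated by a $\widehat{\gl(1|1)}$-highest-weight vector of actual $L_0$-eigenvalue zero, which necessarily lies in $P_n$ and is a $\gl(1|1)$-highest-weight vector there. The space of $\gl(1|1)$-highest-weight vectors in $P_n$ (i.e.\ $\ker\psi^+_0$) is spanned by $\psi^+_0 v_n$ and $\psi^+_0\psi^-_0 v_n$, and a short computation using \eqref{ell-0} shows that $L_0$ kills both. Since $\psi^-_0(\psi^+_0 v_n) = -\psi^+_0\psi^-_0 v_n \neq 0$ while $\psi^-_0(\psi^+_0\psi^-_0 v_n) = 0$, only $\psi^+_0\psi^-_0 v_n$ generates a simple atypical submodule, namely $\widehat{A}^k_{n,0}$, which is therefore the socle. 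Passing to $\widehat{P}_n^k/\mathrm{soc}(\widehat{P}_n^k)$, the identities $\psi^-_0\psi^+_0 v_n = -\psi^+_0\psi^-_0 v_n$ and $(\psi^-_0)^2 = 0$ show that the images of $\psi^+_0 v_n$ and $\psi^-_0 v_n$ are highest-weight vectors additionally annihilated by $\psi^-_0$, with $N_0$-eigenvalues $n+1$ and $n-1$ respectively, so they generate distinct simple submodules $\widehat{A}^k_{n+1,0}$ and $\widehat{A}^k_{n-1,0}$. These are non-isomorphic and hence have trivial intersection, so their sum is direct; counting composition factors, this direct sum exhausts the socle of $\widehat{P}_n^k/\mathrm{soc}(\widehat{P}_n^k)$, and the remaining composition factor $\widehat{A}^k_{n,0}$ is the top, producing the claimed Loewy diagram.

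Finally, for the logarithmic structure, formula \eqref{ell-0} gives $L_0 v_n = -\frac{1}{k}\psi^+_0\psi^-_0 v_n$, which is non-zero in $P_n$, whereas $\psi^+_0\psi^-_0 v_n$ is an actual $L_0$-eigenvector of eigenvalue zero. This exhibits an $L_0$-Jordan block of size at least two. For the matching upper bound, the nilpotent part $N_{L_0} := L_0 - L_0^{\mathrm{ss}}$ commutes with every affine mode $X_r$, since both $[L_0,X_r]$ and $[L_0^{\mathrm{ss}},X_r]$ equal $-rX_r$ on conformal weight spaces. Because $v_n$ generates $\widehat{P}_n^k$, verifying $N_{L_0}^2 = 0$ on the whole module reduces to the single computation $N_{L_0}^2 v_n = \frac{1}{k^2}(\psi^+_0\psi^-_0)^2 v_n = 0$, which holds because $(\psi^+_0)^2 = 0$ and $E_0 v_n = 0$. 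The main technical obstacle will be ensuring the middle Loewy layer is genuinely a direct sum rather than an indecomposable extension, but this is forced once the two distinct simple submodules $\widehat{A}^k_{n\pm 1,0}$ of $\widehat{P}_n^k/\mathrm{soc}(\widehat{P}_n^k)$ are exhibited.
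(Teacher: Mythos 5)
Your proof is correct and takes the same approach as the paper, which reads off the Loewy structure of $\widehat{P}_n^k$ from the $\gl(1|1)$-module structure of the bottom weight space $P_n$ together with the exact sequences \eqref{extatypical} and \eqref{proj1}. You fill in considerably more detail than the paper's terse citation — in particular, the observation that the nilpotent part $L_0 - L_0^{\mathrm{ss}}$ commutes with all affine modes, so that $L_0$-block size exactly $2$ follows from the single computation $L_0^2 v_n = 0$ on the cyclic generator, is a rigorous justification of a step the paper leaves implicit.
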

\begin{proof}
The Loewy diagram of $\widehat{P}_{n}^k$ follows from the exact sequences \eqref{extatypical}, \eqref{proj1} and the Loewy diagram of $P_n$. To prove that $\widehat{P}_{n}^k$ is logarithmic, take a generator $v_n$ of the lowest conformal weight space  $(\widehat{P}_{n}^k)_{[0]}$, which is the $\gl(1|1)$-module $P_n$ (see Sec. \ref{sec:gl(1|1)_reps}). Then from the expression \eqref{ell-0} for $L_0$, we get $L_0v_n = -\frac{1}{k}\psi^+\psi^-v_n\neq 0$, while $L_0^2v_n = 0$. So $v_n$ is a generalized eigenvector for $L_0$ with block size $2$, and since $v_n$ generates $\widehat{P}_n^k$ as a $V_k(\gl(1|1))$-module, $\widehat{P}_n^k$ is a logarithmic module with $L_0$-block size $2$.
\end{proof}

To obtain the projective covers of the atypical irreducibles $\widehat{A}_{n,\ell k}^k$, $\ell\in\ZZ\setminus\lbrace 0\rbrace$, we need the {\em spectral flow} automorphisms $\sigma^\ell$  of $\widehat{\gl(1|1)}$:
\begin{equation}\label{spectralflow}
\sigma^{\ell}(N_r) = N_r,\;\;\; \sigma^{\ell}(E_r) = E_r - \ell{\bf k}\delta_{r,0},\;\;\; \sigma^{\ell}(\psi^{\pm}_r) = \psi^{\pm}_{r\mp\ell},\;\;\;\sigma^\ell(\mathbf{k})=\mathbf{k}.
\end{equation}
For $\ell\in\ZZ$, $\sigma^\ell$ extends to a vertex superalgebra automorphism of $V_k(\gl(1|1))$, and for a $V_k(\gl(1|1))$-module $W$, the spectral flow module $\sigma^\ell(W)$ has the same underlying superspace of $W$ but has vertex operator
\begin{equation*}
 Y_{\sigma^\ell(W)}(v,x)=Y_W(\sigma^{-\ell}(v),x)
\end{equation*}
for $v\in V_k(\gl(1|1))$.

For $n\in\CC$ and $\ell \in -\ZZ_{+}$, we define
\[
\widehat{P}_{n-\ell-\frac{1}{2}, \ell k}^k : = \sigma^{\ell}(\widehat{P}_{n}^k).
\]
Since it was shown in \cite[Sec. 3.2]{CR2} that $\sigma^{\ell}(\widehat{V}_{n,0}^k) = \widehat{V}_{n-\ell, \ell k}^k$, applying $\sigma^{\ell}$ to \eqref{proj1} yields the exact sequence
\begin{equation}\label{proj2}
0 \rightarrow \widehat{V}_{n-\ell+\frac{1}{2}, \ell k}^k \rightarrow \widehat{P}_{n-\ell-\frac{1}{2}, \ell k}^k \rightarrow \widehat{V}_{n-\ell-\frac{1}{2}, \ell k}^k \rightarrow 0.
\end{equation}
For $\ell \in \ZZ_{+}$, we also need the \textit{conjugation} automorphism of $\widehat{\gl(1|1)}$ defined by
\[
w(N_r) = -N_r,\;\;\; w(E_r) = -E_r,\;\;\; w(\psi_r^+) = \psi_r^-,\;\;\; w(\psi_r^-) = -\psi_r^+,\;\;\;w(\mathbf{k})=\mathbf{k}.
\]
We then define
\[
\widehat{P}_{-n-\ell+\frac{1}{2}, \ell k}^k := \sigma^{\ell}(w(\widehat{P}_{n}^k))
\]
for $\ell\in\ZZ_+$. Since $\sigma^{\ell}(w(\widehat{V}_{n,0}^k)) = \widehat{V}_{-n-\ell, \ell k}^k$ (again see \cite[Sec. 3.2]{CR2}), applying $\sigma^{\ell}\circ w$ to \eqref{proj1} yields the exact sequence
\begin{equation}\label{proj3}
0 \rightarrow \widehat{V}_{-n-\ell-\frac{1}{2}, \ell k}^k \rightarrow \widehat{P}_{-n-\ell+\frac{1}{2}, \ell k}^k \rightarrow \widehat{V}_{-n-\ell+\frac{1}{2}, \ell k}^k \rightarrow 0.
\end{equation}
If we use the notation
\begin{equation}\label{defofvar}
\varepsilon(\ell) = \begin{cases} \;\;\,\frac{1}{2} & \text{if}\ \ell \in \ZZ_{+}     \\ \;\;\, 0 & \text{if}\ \ell = 0\; \\ -\frac{1}{2} & \text{if}\ \ell \in -\ZZ_+ \end{cases} ,
\end{equation}
the exact sequences (\ref{proj2}) and (\ref{proj3}) can be written uniformly:
\begin{equation}\label{proj4}
0 \rightarrow \widehat{V}_{n-\ell-\varepsilon(\ell), \ell k}^k \rightarrow \widehat{P}_{n-\ell+\varepsilon(\ell), \ell k}^k \rightarrow \widehat{V}_{n-\ell+\varepsilon(\ell), \ell k}^k \rightarrow 0
\end{equation}
for $n\in\CC$, $\ell\in\ZZ\setminus\lbrace 0\rbrace$.

We will sometimes use the notational convention $\widehat{P}_{n,0}^k$ for $\widehat{P}_{n}^k$. Now we can prove:
\begin{theorem}
For $n\in\CC$ and $\ell\in\ZZ$, the modules $\widehat{P}_{n-\ell+\varepsilon(\ell),\ell k}^k$ are projective in $\cO_k^{fin}$.
\end{theorem}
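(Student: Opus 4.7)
The plan is to first establish projectivity of $\widehat{P}^k_n=\widehat{P}^k_{n,0}$ in the block $(\cO_k^{fin})_0$, and then transport to general $\ell$ using the definitions $\widehat{P}^k_{n-\ell-\frac{1}{2},\ell k}=\sigma^\ell(\widehat{P}^k_n)$ (for $\ell<0$) and $\widehat{P}^k_{-n-\ell+\frac{1}{2},\ell k}=\sigma^\ell(w(\widehat{P}^k_n))$ (for $\ell>0$). The starting observation is that $P_n=(\widehat{P}^k_n)_{[0]}$: by Proposition \ref{prop:Pnk_Loewy}, $L_0$ acts nilpotently on the canonical copy of $P_n$ in $\widehat{P}^k_n$, while $[L_0,a_r]=-ra_r$ shows that any product $a_{-r_1}\cdots a_{-r_s}v$ with $v\in P_n$ and $r_i>0$ has strictly positive generalized $L_0$-eigenvalue $r_1+\cdots+r_s$. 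Combined with Lemma \ref{lem:usefullemma_1}, which forces conformal weights of modules in $(\cO_k^{fin})_0$ to be non-negative, the weight-zero space of $\widehat{P}^k_n$ must equal $P_n$.

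Next I would run a standard induction--restriction argument to deduce projectivity. Given a surjection $\pi: W_1\twoheadrightarrow W_2$ in $(\cO_k^{fin})_0$ and a morphism $f: \widehat{P}^k_n\to W_2$, the restriction $f|_{P_n}$ lands in $(W_2)_{[0]}$ (since $f$ is $L_0$-intertwining and $L_0$ acts with generalized eigenvalue $0$ on $P_n\subseteq\widehat{P}^k_n$), giving a $\gl(1|1)$-morphism $\bar f:P_n\to(W_2)_{[0]}$. The functor $W\mapsto W_{[0]}$ is exact on $(\cO_k^{fin})_0$: left exactness is immediate, while surjectivity $\pi_{[0]}:(W_1)_{[0]}\twoheadrightarrow(W_2)_{[0]}$ follows by lifting $w_2\in(W_2)_{[0]}$ to any $w_1\in W_1$ and projecting $w_1$ onto its weight-zero component in the generalized $L_0$-eigenspace decomposition (using non-negativity of conformal weights). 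Since $P_n$ is projective in $\cO$, we can lift $\bar f$ through $\pi_{[0]}$ to a $\gl(1|1)$-morphism $\bar g: P_n\to(W_1)_{[0]}$. The universal property of the induced module $\widehat{P}^k_n$, together with the fact that the positive modes $a_r$ ($r>0$) annihilate $(W_1)_{[0]}$ by the same non-negativity argument, extends $\bar g$ uniquely to a $\widehat{\gl(1|1)}$-morphism $g: \widehat{P}^k_n\to W_1$, and comparing $\pi\circ g$ with $f$ on the generators $P_n$ forces $\pi\circ g=f$. Thus $\widehat{P}^k_n$ is projective in $(\cO_k^{fin})_0$, and hence in $\cO_k^{fin}$ since the block decomposition $\cO_k^{fin}=\bigoplus_e(\cO_k^{fin})_e$ has no morphisms between distinct blocks.

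Finally, $\sigma^\ell$ and $w$ come from vertex superalgebra automorphisms of $V_k(\gl(1|1))$, so they induce auto-equivalences of its module supercategory. These restrict to auto-equivalences of $\cO_k^{fin}$ --- preserving finite length and the semisimplicity of $N_0$ and $E_0$, merely shifting $E_0$-eigenvalues by $\pm\ell k$ --- and hence preserve projectivity. Applying them to $\widehat{P}^k_n$ as in the definitions recalled above immediately yields projectivity of $\widehat{P}^k_{n-\ell+\varepsilon(\ell),\ell k}$ for every $\ell\in\ZZ\setminus\{0\}$. The main technical obstacle is the first half of the plan --- establishing $(\widehat{P}^k_n)_{[0]}=P_n$, the exactness of $(\cdot)_{[0]}$, and the extension step in the universal property --- all of which hinge on the non-negativity of conformal weights within $(\cO_k^{fin})_0$; once that control is in place, projectivity at $\ell=0$ is a routine lifting argument and the extension to general $\ell$ is immediate.
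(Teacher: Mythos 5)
Your proof is correct and follows essentially the same route as the paper's: reduce to the block $(\cO_k^{fin})_0$, pass to lowest conformal weight spaces where $P_n$ is projective in category $\cO$, extend the lift via the universal property of the generalized Verma module, and transport to $\ell\neq 0$ via $\sigma^\ell$ and $w$. The only organizational differences are minor (you spell out exactness of $W\mapsto W_{[0]}$ and the identification $(\widehat{P}^k_n)_{[0]}=P_n$, and phrase the transport step in terms of auto-equivalences rather than applying spectral flow directly to the lifting diagram), and the appeal to Lemma 2.2.4 for "non-negativity" is slightly loose --- what you actually need is that conformal weights in $(\cO_k^{fin})_0$ lie in $\NN$, which is the observation preceding the lemma --- but the substance matches the paper.
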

\begin{proof}
 Consider a diagram
 \begin{equation}\label{diag:P_proj}
  \xymatrix{
  & \widehat{P}_{n-\ell+\varepsilon(\ell),\ell k}^k \ar[d]^q \\
  A \ar[r]_{\pi} & B \\
  }
 \end{equation}
in $\cO_k^{fin}$ where $\pi$ is surjective and we may assume $q\neq 0$. We first consider the case $\ell=0$. Since there are no non-zero morphisms between the subcategories $(\cO_k^{fin})_e$ for different $e$'s, the diagram restricts to a surjection $\pi^0: A^0\twoheadrightarrow B^0$ and to $q:\widehat{P}_n^k\rightarrow B^0$ where $A^0=\ker_A E_0$ and $B^0=\ker_B E_0$. Then the diagram further restricts to a diagram
\begin{equation*}
  \xymatrix{
  &P_n \ar[d]^{q_{[0]}} \\
  A^0_{[0]} \ar[r]_{\pi_{[0]}} & B^0_{[0]} \\
  }
 \end{equation*}
 of $\gl(1|1)$-module homomorphisms, with $\pi_{[0]}$ surjective.

 Now since $N_0$ also acts semisimply on $A$ and $B$, $A^0_{[0]}$ and $B^0_{[0]}$ are objects of the category $\cO$. Since $P_n$ is projective in $\cO$, there is a $\gl(1|1)$-module map $\widetilde{q}_{[0]}: P_n\rightarrow A^0_{[0]}$ such that $q_{[0]}=\pi_{[0]}\circ\widetilde{q}_{[0]}$. Then Lemma \ref{lem:usefullemma_1} implies that positive modes from $\widehat{\gl(1|1)}$ annihilate $A^0_{[0]}$, so the universal property of generalized Verma $\widehat{\gl(1|1)}$-modules shows that $\widetilde{q}_{[0]}$ extends to a homomorphism
 \begin{equation*}
  \widetilde{q}: \widehat{P}_n^k\rightarrow A^0\hookrightarrow A.
 \end{equation*}
Since $P_n$ generates $\widehat{P}_n^k$, it follows that $q=\pi\circ\widetilde{q}$, and we have shown that $\widehat{P}_n^k$ is projective in $\cO_k^{fin}$.

 Now for $\ell\neq 0$, we apply $\sigma^{-\ell}$ or $w^{-1}\circ\sigma^{-\ell}$ to the diagram \eqref{diag:P_proj} to get
 \begin{equation*}
  \xymatrix{
  & \widehat{P}_{\pm n}^k \ar[d]^q \\
  \widetilde{A} \ar[r]_{\pi} & \widetilde{B} \\
  }
 \end{equation*}
 with $\pi$ still surjective. Since the spectral flows $\widetilde{A}$ and $\widetilde{B}$ are still objects of $\cO_k^{fin}$ and since $\widehat{P}_{\pm n}^k$ is projective in $\cO_k^{fin}$, we get $\widetilde{q}$ such that $q=\pi\circ\widetilde{q}$. Applying $\sigma^\ell$ or $\sigma^\ell\circ w$ then shows that $\widetilde{q}$ defines a homomorphism $\widehat{P}_{n-\ell+\varepsilon(\ell),\ell k}^k\rightarrow A$, so $\widehat{P}_{n-\ell+\varepsilon(\ell),\ell k}^k$ is projective in $\cO_k^{fin}$.
\end{proof}

Now we can prove that the projective modules $\widehat{P}_{n-\ell+\varepsilon(\ell),\ell k}^k$ are projective covers:
\begin{theorem}\label{projectivecovers}
 For $n\in\CC$ and $\ell\in\ZZ$, $\widehat{P}_{n-\ell+\varepsilon(\ell),\ell k}^k$ is a projective cover  in $\cO_k^{fin}$ of the atypical irreducible module $\widehat{A}_{n-\ell+\varepsilon(\ell),\ell k}^k$.
\end{theorem}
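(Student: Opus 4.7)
The plan is to leverage the projectivity of $\widehat{P}^k_{n-\ell+\varepsilon(\ell),\ell k}$ established in the previous theorem, and to reduce the projective-cover claim to showing that this module admits a unique simple quotient isomorphic to $\widehat{A}^k_{n-\ell+\varepsilon(\ell),\ell k}$. Recall that if $P$ is projective in an abelian category of finite-length objects and $p : P \twoheadrightarrow S$ is a surjection onto a simple $S$, then $p$ is essential---so that $P$ is a projective cover of $S$---precisely when $\ker p$ contains every proper submodule of $P$, i.e., when $P$ has a unique maximal submodule, or equivalently a simple cosocle.

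First I would handle the case $\ell = 0$. By Proposition \ref{prop:Pnk_Loewy}, the Loewy diagram of $\widehat{P}^k_n$ has cosocle consisting of a single copy of the simple module $\widehat{A}^k_{n,0}$. Since the cosocle is the largest semisimple quotient of this length-$4$ module, $\widehat{P}^k_n$ has a unique maximal submodule (the radical), whose quotient is $\widehat{A}^k_{n,0}$. The canonical surjection $\widehat{P}^k_n \twoheadrightarrow \widehat{A}^k_{n,0}$, induced from the $\gl(1|1)$-quotient $P_n \twoheadrightarrow A_n$ on lowest-conformal-weight spaces, therefore has superfluous kernel, so it is essential. Combined with projectivity, this shows $\widehat{P}^k_n$ is a projective cover of $\widehat{A}^k_{n,0}$.

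For $\ell \neq 0$, I would transport the $\ell = 0$ conclusion across the automorphisms used to build $\widehat{P}^k_{n-\ell+\varepsilon(\ell),\ell k}$: namely $\sigma^\ell$ when $\ell \in -\ZZ_+$ and $\sigma^\ell \circ w$ when $\ell \in \ZZ_+$. Both $\sigma^\ell$ and $w$ are level-preserving automorphisms of $\widehat{\gl(1|1)}$, so the induced twist functors are autoequivalences of the category of level-$k$ modules, and they restrict to autoequivalences of $\cO_k^{fin}$ because spectral flow only shifts $E_0$ by a scalar while $w$ negates $E_0$ and $N_0$. As autoequivalences they preserve projectivity, simplicity, Loewy structure, and essentiality of epimorphisms. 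In particular, the cosocle of $\widehat{P}^k_{n-\ell+\varepsilon(\ell),\ell k}$ is obtained by applying the appropriate twist to the cosocle $\widehat{A}^k_{n,0}$ of $\widehat{P}^k_n$; by the labelling conventions of the paper (and formulas analogous to the Verma-module identifications $\sigma^\ell(\widehat{V}^k_{n,0}) = \widehat{V}^k_{n-\ell,\ell k}$ and $\sigma^\ell(w(\widehat{V}^k_{n,0})) = \widehat{V}^k_{-n-\ell,\ell k}$ recalled from \cite[Sec. 3.2]{CR2}), this simple cosocle is $\widehat{A}^k_{n-\ell+\varepsilon(\ell),\ell k}$, and the essentiality argument from the $\ell = 0$ case carries over. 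The main obstacle I anticipate is thus pure bookkeeping: checking that the shifts of the $(n,e)$-labels produced by $\sigma^\ell$ and $\sigma^\ell \circ w$ match the definitions of $\widehat{P}^k_{n-\ell+\varepsilon(\ell),\ell k}$ and of $\varepsilon(\ell)$, with the piecewise sign $\varepsilon(\ell) = \pm\tfrac{1}{2}$ arising precisely from the asymmetric cosocle placement in the Loewy diagrams of atypical Verma modules at $e/k > 0$ versus $e/k < 0$.
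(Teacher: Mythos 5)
Your proposal is correct and takes essentially the same route as the paper: projectivity from the preceding theorem, the unique maximal submodule of $\widehat{P}_n^k$ from its Loewy diagram for $\ell=0$, and spectral flow/conjugation to transport the conclusion to general $\ell$. The only cosmetic difference is that the paper phrases the reduction by applying $\sigma^{-\ell}$ (or $w^{-1}\circ\sigma^{-\ell}$) to the lifting diagram and directly checking surjectivity of the lift, whereas you invoke that these twist functors are autoequivalences of $\cO_k^{fin}$ preserving essential epimorphisms; these are equivalent formulations of the same argument.
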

\begin{proof}
 The exact sequences \eqref{extatypical}, \eqref{exactsequencesimple}, \eqref{proj1}, and \eqref{proj4} show that there is a surjective map $\pi:\widehat{P}^k_{n-\ell+\varepsilon(\ell),\ell k}\rightarrow \widehat{A}^k_{n-\ell+\varepsilon(\ell),\ell k}$. Then if $q: P\rightarrow\widehat{A}^k_{n-\ell+\varepsilon(\ell),\ell k}$ is any surjective morphism in $\cO_k^{fin}$ with $P$ projective, there is a homomorphism $\widetilde{q}: P\rightarrow\widehat{P}^k_{n-\ell+\varepsilon(\ell),\ell k}$ such that the diagram
 \[\xymatrix{
                &         P \ar[d]^{q}   \ar@{-->}[ld]_{\widetilde{q}}  \\
  \widehat{P}^k_{n-\ell+\varepsilon(\ell),\ell k}  \ar[r]_{\pi} & \widehat{A}^k_{n-\ell+\varepsilon(\ell),\ell k}             }
\]
commutes. We need to show that $\widetilde{q}$ is surjective.

Applying $\sigma^{-\ell}$ or $w^{-1}\circ\sigma^{-\ell}$ to the diagram reduces the surjectivity of $\widetilde{q}$ to the $\ell=0$ case. But if $\ell=0$, the Loewy diagram of $\widehat{P}_n^k$ from Proposition \ref{prop:Pnk_Loewy} shows that $\ker\pi$ is the unique maximal proper submodule of $\widehat{P}_n^k$. Since $q\neq 0$, the image of $\widetilde{q}$ is not contained in $\ker\pi$, and we conclude $\widetilde{q}$ is surjective.
\end{proof}

We now turn to the typical irreducibles $\widehat{V}_{n,e}^k$ for $e/k \notin \ZZ$; we will show that they are projective and thus are their own projective covers in $\cO_k^{fin}$. Since they are irreducible and since every module in $\cO_k^{fin}$ has finite length, it is sufficient to show that when $e/k\notin\ZZ$, then
\begin{equation}\label{eqn:no_ext}
 \Ext^1_{\cO_k^{fin}}( \widehat{V}_{n,e}^k, W)=0
\end{equation}
for any irreducible $V_k(\gl(1|1))$-module $W$. From the direct sum decomposition $\cO_k^{fin}$, it is clear that an extension of $\widehat{V}_{n,e}^k$ by $W$ must split unless perhaps $W$ is an object of $(\cO_k^{fin})_e$; so we may assume $W=\widehat{V}_{n',e}^k$ for some $n'\in\CC$. Furthermore, the commutation relations for $N_0$ show that there is a direct sum decomposition
\begin{equation*}
 (\cO_k^{fin})_e=\bigoplus_{\overline{n}\in\CC/\ZZ} (\cO_k^{fin})_{\overline{n},e}
\end{equation*}
where $(\cO_k^{fin})_{\overline{n},e}$ is the full subcategory consisting of modules in $(\cO_k^{fin})_{e}$ on which $N_0$ acts by eigenvalues from the coset $\overline{n}$. Thus we may assume $W$ in \eqref{eqn:no_ext} is isomorphic to $V_{n',e}$ with $n'-n\in\ZZ$. We divide the proof of \eqref{eqn:no_ext} into the cases $n'\neq n$ and $n'=n$ in the two following lemmas:

%
%
\begin{lemma}\label{typicaldifferent}
If $e/k\notin \ZZ$ and $(n,e) \neq (n',e')$, then $\Ext^1_{\cO_k^{fin}}(\widehat{V}_{n,e}^k, \widehat{V}_{n',e'}^k) = 0$.
\end{lemma}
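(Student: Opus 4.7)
The plan is to show that every short exact sequence
\begin{equation*}
0\to\widehat{V}_{n',e'}^k\xrightarrow{\iota}M\xrightarrow{\pi}\widehat{V}_{n,e}^k\to 0
\end{equation*}
in $\cO_k^{fin}$ splits. By the universal property of the generalized Verma module $\widehat{V}_{n,e}^k$, a splitting $s:\widehat{V}_{n,e}^k\to M$ of $\pi$ is determined by a vector $\tilde v_n\in M$ satisfying $\pi(\tilde v_n)=v_n$, $N_0\tilde v_n=(n+\tfrac{1}{2})\tilde v_n$, $E_0\tilde v_n=e\tilde v_n$, $L_0\tilde v_n=\Delta_{n,e}\tilde v_n$, $\psi^+_0\tilde v_n=0$, and $a_r\tilde v_n=0$ for every positive affine mode $a_r$. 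The task is to construct such a $\tilde v_n$.

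I first perform a sequence of reductions using central actions. Since $E_0$ is central and acts semisimply on $M\in\cO_k^{fin}$, when $e\neq e'$ one has $M=\ker(E_0-e)\oplus\ker(E_0-e')$ as $\widehat{\gl(1|1)}$-modules, which immediately produces a splitting; so I may assume $e=e'$. Because affine modes shift $N_0$-eigenvalues by integers while $N_0$ acts semisimply on $M$, decomposing $M$ by cosets of $N_0$-eigenvalues modulo $\ZZ$ shows that I may also assume $m:=n'-n\in\ZZ\setminus\{0\}$. Similarly, affine modes shift $L_0$ generalized eigenvalues by integers, so if $t:=\frac{em}{k}=\Delta_{n',e}-\Delta_{n,e}\notin\ZZ$ the cosets $\Delta_{n,e}+\ZZ$ and $\Delta_{n',e}+\ZZ$ are disjoint and the corresponding decomposition of $M$ into $\widehat{\gl(1|1)}$-submodules again yields the splitting.

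The remaining cases are $t\in\ZZ\setminus\{0\}$. Suppose first $t>0$, so $\Delta_{n,e}<\Delta_{n',e}$ and the $L_0$-spectrum of $M$ lies in $\Delta_{n,e}+\NN$. Since $(\widehat V_{n',e}^k)_{[\Delta_{n,e}]}=0$, the exact sequence of generalized $\Delta_{n,e}$-eigenspaces gives $M_{[\Delta_{n,e}]}\cong(\widehat V_{n,e}^k)_{\Delta_{n,e}}=V_{n,e}$ as $\gl(1|1)$-modules with $L_0$ acting by the scalar $\Delta_{n,e}$. The preimage $\tilde v_n$ of $v_n$ then automatically has the required Cartan and $\psi^+_0$ eigenvalues and is a genuine $L_0$-eigenvector; for any positive mode $a_r$, the vector $a_r\tilde v_n$ has generalized $L_0$-eigenvalue $\Delta_{n,e}-r\notin\Delta_{n,e}+\NN$, hence vanishes. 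A brief calculation with the brackets $[N_r,\psi^-_0]=-\psi^-_r$, $\{\psi^+_r,\psi^-_0\}=E_r$, $[E_r,\psi^-_0]=0$, $\{\psi^-_r,\psi^-_0\}=0$, and $[L_r,\psi^-_0]=0$ confirms that positive modes also annihilate $\psi^-_0\tilde v_n$, so the $\gl(1|1)$-submodule generated by $\tilde v_n$ is isomorphic to $V_{n,e}$ and killed by positive modes; the universal property then supplies $s$.

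For $t<0$ the lowest conformal weight of $M$ is $\Delta_{n',e}$ rather than $\Delta_{n,e}$, so the direct construction above fails. Instead I apply Proposition \ref{prop:contra}: the contragredient of the short exact sequence reads
\begin{equation*}
0\to\widehat V_{-n,-e}^k\to M'\to\widehat V_{-n',-e}^k\to 0,
\end{equation*}
whose associated parameter is $\frac{(-e)(-n-(-n'))}{k}=-t>0$, so the preceding paragraph splits $M'$ and hence $M$. The main technical issue is precisely this overlapping-spectrum case $t\in\ZZ$; the contragredient trick is what converts the unfavorable situation $t<0$ into the favorable one in which the lowest-weight space of $M$ canonically isolates a lift of $v_n$.
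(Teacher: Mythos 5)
Your proof follows essentially the same route as the paper's: reduce by the central $E_0$-action to $e'=e$, reduce by $N_0$-cosets to $n'-n\in\ZZ$ (the paper does this reduction in the discussion preceding the lemma), reduce by $L_0$-cosets to $\Delta_{n',e}-\Delta_{n,e}\in\ZZ$, then split into the two cases according to which Verma module has the smaller lowest conformal weight, handling the favorable case directly via the lowest-weight space and the unfavorable case by passing to contragredients. The computational detail in the favorable case is fine, and the arithmetic check that the contragredient sequence has the favorable parameter $-t>0$ is correct.

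The one genuine omission is in the contragredient step. Before applying the contragredient functor to the exact sequence $0\to\widehat V_{n',e}^k\xrightarrow{\iota}M\xrightarrow{\pi}\widehat V_{n,e}^k\to 0$, one must know that $\iota$ and $\pi$ are \emph{parity-homogeneous}: in this supercategory a morphism is an arbitrary sum of an even and an odd part, the contragredient $f'$ is defined (via \eqref{eqn:contra_vrtx_op} and the $(-1)^{\vert f\vert\vert w'\vert}$ sign) only for homogeneous $f$, and only for homogeneous $f$ is it clear that $\ker$ and $\Im$ are $\ZZ_2$-graded so that exactness is preserved. The paper proves this explicitly: the restriction of $\iota$ to the lowest conformal weight space of $M$ is a $\gl(1|1)$-isomorphism between two copies of the simple module $V_{n',e}$, hence is even or odd (since $\Hom_{\gl(1|1)}(V_{n',e},V_{n',e})$ is one-dimensional in each parity but only one parity can be nonzero given fixed gradings); because that space generates $\widehat V_{n',e}^k$, $\iota$ itself is homogeneous, whence $\ker\pi=\Im\,\iota$ is graded and $\pi$ is homogeneous too. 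A second, smaller imprecision in the same spirit: since $(\widehat V_{n,e}^k)'\cong\widehat V_{-n,-e}^k$ only by an \emph{odd} isomorphism, the contragredient sequence is really $0\to\Pi(\widehat V_{-n,-e}^k)\to M'\to\Pi(\widehat V_{-n',-e}^k)\to 0$ up to parity shifts, which does not affect the splitting argument but should be tracked. Finally, "splits $M'$ and hence $M$" deserves one more line: one applies the contragredient functor again and uses the natural even isomorphism $\delta_W:W\to W''$, as the paper does. None of these is hard to repair, but the parity-homogeneity check in particular is a real step that your argument silently assumes.
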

\begin{proof}
We need to prove that any exact sequence
\begin{equation}\label{typicalexact}
0 \rightarrow \widehat{V}_{n',e'}^k \xr{q} A \xr{\pi} \widehat{V}_{n,e}^k \rightarrow 0
\end{equation}
splits when $A$ is a module in $\cO_k^{fin}$. As discussed above, we may assume $e'=e$, so that $n'\neq n$. Since $A$ has a direct sum decomposition $A=\bigoplus_{\overline{h}\in\CC/\ZZ} A_{\overline{h}}$ where $A_{\overline{h}}=\bigoplus_{h\in\overline{h}} A_{[h]}$, \eqref{typicalexact} must split unless perhaps lowest conformal weights satisfy $\Delta_{n,e}-\Delta_{n',e}\in\ZZ$. By \eqref{eqn:Delta_n,e},
\begin{equation*}
 \Delta_{n,e}-\Delta_{n',e}=\frac{e}{k}(n-n')\neq 0,
\end{equation*}
so we may assume either $\Delta_{n,e}-\Delta_{n',e}<0$ or $\Delta_{n,e}-\Delta_{n',e}>0$.

If $\Delta_{n,e}-\Delta_{n',e}<0$, then the lowest conformal weight space of $A$ is isomorphic to $V_{n,e}$ as a $\gl(1|1)$-module, and $\pi$ restricts to an isomorphism on lowest conformal weight spaces. Thus the universal property of induced $\widehat{\gl(1|1)}$-modules yields a homomorphism $\sigma:\widehat{V}^k_{n,e}\rightarrow A$ splitting the sequence.

If $\Delta_{n,e}-\Delta_{n',e}>0$, then $q$ restricts to a $\gl(1|1)$-module isomorphism from $V_{n',e'}$ onto the lowest conformal weight space of $A$, and this restriction must be even or odd. Since $V_{n',e'}$ generates $\widehat{V}_{n',e'}^k$, it follows that $q$ is parity-homogeneous and $\Im\,q=\ker\pi$ is $\ZZ_2$-graded. Then $\pi$ factors as
\begin{equation*}
 A\twoheadrightarrow A/\ker\pi\xrightarrow{\cong} \widehat{V}^k_{n,e}
\end{equation*}
where the first map is even and second is even or odd because $A/\ker\pi$ is either $\widehat{V}^k_{n,e}$ or its parity-reversed version. Thus $\pi$ is also parity-homogeneous, and we may apply the contragredient functor to get an exact sequence
\begin{equation*}
 0\rightarrow(\widehat{V}^k_{n,e})'\xr{\pi'} A'\xr{q'}(\widehat{V}^k_{n',e})'\rightarrow 0.
\end{equation*}
Since $e/k\notin\ZZ$, $(\widehat{V}^k_{n,e})'\cong\widehat{V}_{-n,-e}^k$ and $(\widehat{V}^k_{n',e})'\cong\widehat{V}^k_{-n',-e}$ by Proposition \ref{prop:contra}. Then the previous case implies that the contragredient sequence splits, that is, we have $\sigma': (\widehat{V}_{n',e}^k)'\rightarrow A'$ such that $q'\circ\sigma'=\id$. Taking contragredients again and applying the natural isomorphism $\delta$, we see that
\begin{equation*}
 \sigma := \delta_{\widehat{V}^k_{n',e}}^{-1}\circ\sigma''\circ\delta_A: A\rightarrow \widehat{V}^k_{n',e}
\end{equation*}
satisfies $\sigma\circ q=\id$. Thus $\mathrm{im}\,q$ is a direct summand of $A$ isomorphic to $\widehat{V}_{n',e}^k$, and there is an isomorphism $\widehat{V}_{n,e}^k\cong\ker\sigma$ splitting \eqref{typicalexact}.
\end{proof}

\begin{lemma}\label{typicalself}
If $e \neq 0$, then $\Ext_{\cO_k^{fin}}^1(\widehat{V}^k_{n,e}, \widehat{V}^k_{n,e}) = 0$
\end{lemma}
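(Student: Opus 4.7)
The plan is to restrict the extension to its lowest conformal weight space, where the $N_0$-weight structure is so rigid that one can produce an even lift of the top highest-weight vector which automatically satisfies every defining relation of a generalized Verma module generator; universality of $\widehat{V}^k_{n,e}$ then gives a splitting homomorphism.

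More concretely, consider any extension $0 \to \widehat{V}^k_{n,e} \xr{q} A \xr{\pi} \widehat{V}^k_{n,e} \to 0$ in $\cO_k^{fin}$ with $e \neq 0$. Both sub and quotient have minimal conformal weight $\Delta_{n,e}$ by \eqref{eqn:Delta_n,e}, so the same holds for $A$. Since the zero modes $N_0, E_0, \psi_0^{\pm}$ preserve conformal weight while modes $a_r$ with $r > 0$ strictly decrease it, the generalized $L_0$-eigenspace $A_{[\Delta_{n,e}]}$ is a $\gl(1|1)$-submodule of $A$ that fits into a short exact sequence
\begin{equation*}
0 \to V_{n,e} \to A_{[\Delta_{n,e}]} \to V_{n,e} \to 0,
\end{equation*}
and in particular its $N_0$-eigenvalues are confined to $\{n-\tfrac12,\, n+\tfrac12\}$.

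Let $v$ denote the even highest-weight generator of the top copy of $\widehat{V}^k_{n,e}$. Semisimplicity of $N_0$ together with evenness of $\pi$ allows us to choose an even lift $\tilde{v} \in A_{[\Delta_{n,e}]}$ with $\pi(\tilde{v}) = v$ and $N_0 \tilde{v} = (n+\tfrac12)\tilde{v}$. Then $\psi_0^+ \tilde{v}$ would lie in the $N_0 = n+\tfrac32$-weight space of $A_{[\Delta_{n,e}]}$, which is zero by the previous paragraph, so $\psi_0^+ \tilde{v} = 0$. All positive modes of $\widehat{\gl(1|1)}$ annihilate $\tilde{v}$ because $\Delta_{n,e}$ is the minimal conformal weight of $A$. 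Substituting these facts into the formula \eqref{ell-0} gives $L_0 \tilde{v} = \Delta_{n,e} \tilde{v}$ algebraically, so $\tilde{v}$ is a genuine $L_0$-eigenvector satisfying all the defining relations of the generator of the generalized Verma module $\widehat{V}^k_{n,e}$.

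The universal property of generalized Verma modules therefore yields a unique even $V_k(\gl(1|1))$-module homomorphism $\sigma: \widehat{V}^k_{n,e} \to A$ with $\sigma(v) = \tilde{v}$. Since $\pi \circ \sigma$ is then an even endomorphism of $\widehat{V}^k_{n,e}$ fixing the generator $v$, it must equal $\id$, so $\sigma$ splits the sequence. I do not foresee a serious obstacle in this approach; the decisive observation is that the confinement of $N_0$-eigenvalues on $A_{[\Delta_{n,e}]}$ to $\{n\pm\tfrac12\}$ forces $L_0$ to act semisimply on any lift of $v$ in the $N_0 = n+\tfrac12$ weight space, ruling out the Jordan-block phenomenon that could in principle obstruct lifting a $\gl(1|1)$-map to a $\widehat{\gl(1|1)}$-map.
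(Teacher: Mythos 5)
Your proof is correct and follows the same overall strategy as the paper's: restrict the extension to the lowest conformal weight space $A_{[\Delta_{n,e}]}$, split the resulting short exact sequence of $\gl(1|1)$-modules, and then use the universal property of generalized Verma modules to promote the splitting to a $\widehat{\gl(1|1)}$-homomorphism. Where the paper simply cites the fact (established in Section \ref{sec:gl(1|1)_reps}) that $V_{n,e}$ is projective in $\mathcal{O}$ for $e\neq 0$ to split the sequence $0\to V_{n,e}\to A_{[\Delta_{n,e}]}\to V_{n,e}\to 0$, you produce the splitting by hand: since the $N_0$-eigenvalues on $A_{[\Delta_{n,e}]}$ are confined to $\{n\pm\frac12\}$, the $N_0=n+\frac32$ weight space vanishes, so any $N_0$-eigenvector lift $\tilde v$ of $v$ is automatically killed by $\psi_0^+$, i.e.\ is a $\gl(1|1)$-highest-weight vector. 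This is a more elementary and self-contained route to the same splitting; it has the small advantage of not depending on the earlier projectivity result, but the small disadvantage of not generalizing as readily (for $\Ext^1(\widehat V^k_{n,e},\widehat V^k_{n',e})$ with $n'-n\in\{\pm1\}$, which is handled in Lemma \ref{typicaldifferent}, the weight-space vanishing you rely on fails). One minor point: your emphasis on $L_0$ acting semisimply on $\tilde v$, while a correct computation via \eqref{ell-0}, is not what the universal property of generalized Verma modules requires---it needs only that $\tilde v$ generates a $\gl(1|1)$-submodule annihilated by the positive modes; the $L_0$-eigenvalue then comes for free from the Sugawara construction.
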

\begin{proof}
Assume that we have an exact sequence
\begin{equation}\label{typicalsplit0}
0 \rightarrow \widehat{V}^k_{n,e} \rightarrow A \rightarrow \widehat{V}^k_{n,e} \rightarrow 0.
\end{equation}
where $A$ is a module in $\cO_k^{fin}$. Then there is an exact sequence of $\gl(1|1)$-modules
\begin{equation}\label{typicalsplit}
0 \rightarrow V_{n,e} \rightarrow A_{[\Delta_{n,e}]} \rightarrow V_{n,e} \rightarrow 0,
\end{equation}
where $A_{[\Delta_{n,e}]}$ is the lowest conformal weight space of $A$. Since $A$ is a module in $\cO_k^{fin}$, $A_{[\Delta_{n,e}]}$ is a $\gl(1|1)$-module in the category $\cO$. Then since $V_{n,e}$ is projective in $\cO$ for $e\neq 0$, (\ref{typicalsplit}) splits. By the universal property of generalized Verma modules, the splitting homomorphism $V_{n,e}\rightarrow A_{[\Delta_{n,e}]}$ then extends to a unique $\widehat{\gl(1|1)}$-homomorphism $\widehat{V}_{n,e}^k \rightarrow A$ that splits \eqref{typicalsplit0}.
%
\end{proof}

Combining Lemmas \ref{typicaldifferent} and \ref{typicalself} with the discussion preceding Lemma \ref{typicaldifferent}, we conclude:
\begin{theorem}
For $n \in \CC$ and $e/k \notin \ZZ$, the typical irreducible module $\widehat{V}_{n,e}^k$ is projective in $\cO_k^{fin}$.
\end{theorem}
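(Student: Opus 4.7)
The plan is to reduce the statement to showing $\Ext^1_{\cO_k^{fin}}(\widehat{V}^k_{n,e}, W) = 0$ for every simple module $W$ in $\cO_k^{fin}$, and then to handle this by combining the $E_0$-eigenspace decomposition with Lemmas \ref{typicaldifferent} and \ref{typicalself}, exactly as outlined by the authors in the paragraph preceding Lemma \ref{typicaldifferent}.

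First I would note that every module in $\cO_k^{fin}$ has finite length, since $\cO_k^{fin}$ is a full subcategory of $KL_k$ and finite length was established in the proof of Theorem \ref{KLktencat}. Consequently, an object $P$ is projective in $\cO_k^{fin}$ if and only if $\Ext^1_{\cO_k^{fin}}(P, W) = 0$ for every simple $W$; this is a standard dimension-shifting argument using the long exact sequence applied to short exact sequences $0 \to W_1 \to W \to W_2 \to 0$ with $W_2$ simple and $W_1$ of strictly smaller length. So we need only check the Ext-vanishing against simples.

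Next, I would use the decomposition $\cO_k^{fin} = \bigoplus_{e \in \CC} (\cO_k^{fin})_e$ coming from the central action of $E_0$: since there are no non-zero morphisms (and in particular no non-split extensions) between modules with distinct $E_0$-eigenvalues, any potentially non-split extension of $\widehat{V}^k_{n,e}$ by a simple $W$ forces $W$ to lie in $(\cO_k^{fin})_e$. Because $e/k \notin \ZZ$, the hypothesis rules out all atypical irreducibles $\widehat{A}^k_{n',\ell k}$ (whose $E_0$-eigenvalue $\ell k$ satisfies $\ell k/k \in \ZZ$), leaving only typical simples $W \cong \widehat{V}^k_{n',e}$ (or their parity reversals, which the same arguments handle since the lemmas apply to parity-homogeneous morphisms).

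Finally, I would dispatch the remaining two cases using the prior lemmas: Lemma \ref{typicaldifferent} gives $\Ext^1_{\cO_k^{fin}}(\widehat{V}^k_{n,e}, \widehat{V}^k_{n',e}) = 0$ when $n' \neq n$, and Lemma \ref{typicalself} gives the vanishing when $n' = n$. Combining these yields $\Ext^1_{\cO_k^{fin}}(\widehat{V}^k_{n,e}, W) = 0$ for every simple $W$, proving projectivity. Since the main obstacle (the contragredient-duality trick needed to handle the case $\Delta_{n,e} > \Delta_{n',e}$ in Lemma \ref{typicaldifferent}, and the projectivity of $V_{n,e}$ in the finite category $\cO$ in Lemma \ref{typicalself}) has already been resolved in those lemmas, the final step is essentially a bookkeeping combination with no further substantive difficulty.
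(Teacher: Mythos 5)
Your proposal follows essentially the same route as the paper: reduce to $\Ext^1$-vanishing against irreducibles via finite length, use the $E_0$-eigenvalue decomposition to restrict to typical simples with the same $E_0$-eigenvalue, and dispatch the two remaining cases with Lemmas \ref{typicaldifferent} and \ref{typicalself}. One small correction: finite length of objects in $KL_k$ (hence $\cO_k^{fin}$) is part of the \emph{definition} of $KL_k$, not something established in the proof of Theorem \ref{KLktencat}; the paper also records an additional $N_0$-eigenvalue decomposition, but since Lemma \ref{typicaldifferent} is stated for arbitrary $(n',e')\neq(n,e)$, that refinement is not needed for the theorem itself and your omission of it is harmless.
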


\begin{cor}
 Every irreducible $V_k(\gl(1|1))$-module has a projective cover in $\cO_k^{fin}$.
\end{cor}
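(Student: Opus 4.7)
The corollary is essentially a bookkeeping consequence of the results proved in this subsection. The plan is to observe that there are exactly two classes of simple objects in $\cO_k^{fin}$, and a projective cover has already been exhibited for each class.

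First I would recall from the discussion after \eqref{exactsequencesimple} that every simple $V_k(\gl(1|1))$-module falls into one of two families: the typical irreducibles $\widehat{V}^k_{n,e}$ with $n\in\CC$ and $e/k\notin\ZZ$, or the atypical irreducibles $\widehat{A}^k_{n,\ell k}$ with $n\in\CC$ and $\ell\in\ZZ$. So it suffices to handle these two cases separately.

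For the atypical case, Theorem \ref{projectivecovers} states that for any $m\in\CC$ and $\ell\in\ZZ$, writing $m=n-\ell+\varepsilon(\ell)$ for a suitable $n\in\CC$, the module $\widehat{P}^k_{m,\ell k}$ is a projective cover of $\widehat{A}^k_{m,\ell k}$ in $\cO_k^{fin}$. Since $m$ ranges over all of $\CC$ as $n$ does (for each fixed $\ell$), this gives a projective cover of every atypical simple module.

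For the typical case, the theorem immediately preceding the corollary asserts that $\widehat{V}^k_{n,e}$ is itself projective in $\cO_k^{fin}$ whenever $e/k\notin\ZZ$. Since a simple projective module is trivially its own projective cover (the identity map is a projective cover in the sense that no proper submodule surjects onto the simple quotient), we conclude that $\widehat{V}^k_{n,e}$ is a projective cover of itself. Combining the two cases yields the corollary; there is no real obstacle here beyond invoking the two theorems already proved.
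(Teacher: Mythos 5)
Your proof is correct and takes essentially the same route the paper intends: the two classes of simple objects (atypical $\widehat{A}^k_{n,\ell k}$ and typical $\widehat{V}^k_{n,e}$) are covered respectively by Theorem \ref{projectivecovers} and the theorem immediately preceding the corollary, and the reindexing $m = n-\ell+\varepsilon(\ell)$ indeed exhausts all atypical parameters. This is the standard bookkeeping the paper leaves implicit, so there is nothing to add.
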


\section{Fusion rules}

In this section, we compute tensor products of irreducible $\widehat{\gl(1|1)}$-modules. First we discuss some general results on intertwining operators and fusion rules for affine Lie superalgebras, and then we proceed to results in the  affine $\gl(1|1)$ case.

\subsection{Determining fusion rules for affine Lie superalgebras}\label{subsec:gen_fus_rules}

Here we collect some general results on determining fusion rules for affine vertex operator superalgebras. For vertex operator algebras, the general theory is developed in \cite{FZ1, Li} for the non-logarithmic case and \cite{HY} for the logarithmic case.

Let $\frg$ be a finite-dimensional Lie superalgebra with corresponding affine Lie superalgebra $\widehat{\frg}$, and let $V^k(\frg)$ be the level-$k$ affine vertex operator superalgebra. A $V^k(\frg)$-module $W$ is $\NN$-\textit{gradable} if it has an $\NN$-grading $W=\bigoplus_{i\in\NN} W(i)$ compatible with the $\ZZ_2$-grading such that
\begin{equation*}
 a_r\cdot W(i)\subseteq W(i-r)
\end{equation*}
for $a\in\frg$, $r\in\ZZ$, and $i\in\NN$, where $W(i)=0$ for $i<0$. Note that an $\NN$-grading satisfying these properties need not be unique.

Now suppose $\cY$ is an even or odd (possibly logarithmic) $V^k(\frg)$-module  intertwining operator of type $\binom{W_3}{W_1\,W_2}$. The intertwining operator Jacobi identity (see \cite[Def. 3.7]{CKM} for the proper sign factors in the superalgebra generality) implies the following commutator and iterate formulas:
\begin{equation}\label{eqn:intw_op_comm}
 (-1)^{\vert\cY\vert\vert a\vert} a_r\cY(w_1,x) -(-1)^{\vert a\vert\vert w_1\vert}\cY(w_1,x)a_r =\sum_{i\geq 0} \binom{r}{i} x^{r-i}\cY(a_i w_1,x)
\end{equation}
and
\begin{equation}\label{eqn:intwo_op_it}
 \cY(a_r w_1,x) =(-1)^{\vert\cY\vert\vert a\vert}\sum_{i\geq 0}\binom{r}{i} (-x)^i a_{r-i}\cY(w_1,x) -(-1)^{\vert a\vert\vert w_1\vert}\sum_{i\geq 0}\binom{r}{i}(-x)^{r-i}\cY(w_1,x)a_i
\end{equation}
for homogeneous $a\in\frg$, $w_1\in W_1$, and $r\in\ZZ$.

Suppose $W_1$ and $W_2$ are generalized Verma modules $\widehat{M}_1^k$ and $\widehat{M}_2^k$, respectively, and that $W_3$ is $\NN$-gradable such that each $W_3(i)$ is the sum of finitely many generalized $L_0$-eigenspaces. This means that the substitution $x\mapsto 1$ in $\cY$ is well defined (using the branch of logarithm $\log 1=0)$, and $\cY(w_1,1)w_2$ is a vector in the algebraic completion $\prod_{i\in\NN} W_3(i)$. Then the $r=0$ case of the commutator formula \eqref{eqn:intw_op_comm} implies that $\cY$ induces a $\frg$-module homomorphism
\begin{equation*}
 \pi(\cY): M_1\otimes M_2\rightarrow W_3(0)
\end{equation*}
with parity $\vert\cY\vert$ defined by
\begin{equation*}
 \pi(\cY)(m_1\otimes m_2) = \pi_0(\cY(m_1,1)m_2)
\end{equation*}
for $m_1\in M_1$, $m_2\in M_2$, where $\pi_0$ denotes projection onto $W_3(0)$ with respect to the $\NN$-grading of $W_3$. The following is essentially a special case of \cite[Prop. 24]{TW}, which is attributed to Nahm \cite{Na}:
\begin{prop}\label{prop:piY_surjective}
 If $\cY$ is a surjective intertwining operator, then $\pi(\cY)$ is a surjective $\frg$-module homomorphism.
\end{prop}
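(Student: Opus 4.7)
The plan is to show, by induction, that every coefficient of $x^n(\log x)^j$ in the $W_3(0)$-projection $\pi_0\cY(w_1,x)w_2$ lies in $N := \pi(\cY)(M_1\otimes M_2)$, for all $w_1\in W_1$ and $w_2\in W_2$. Surjectivity of $\cY$, meaning that the coefficients of $\cY(w_1,x)w_2$ span $W_3$ as $w_1,w_2$ vary, then forces their $\pi_0$-projections to span $W_3(0)$, giving $N = W_3(0)$ as required.

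For the induction I would use the total $\NN$-grade $g(w_1)+g(w_2)$, where $g$ denotes the $\NN$-grading on the generalized Verma modules $\widehat{M}_1^k$ and $\widehat{M}_2^k$. Consider the inductive step with $w_1 = a_{-p}\tilde{w}_1$ for some homogeneous $a\in\frg$ and $p\geq 1$ (the symmetric case $w_2 = a_{-p}\tilde{w}_2$ is treated with the commutator formula \eqref{eqn:intw_op_comm} in place of the iterate formula). Applying the iterate formula \eqref{eqn:intwo_op_it} as a formal-series identity in $x$ expresses $\cY(a_{-p}\tilde{w}_1,x)w_2$ as a sum of two families of terms: one of the form $a_{-p-i}\cY(\tilde{w}_1,x)w_2$ and one of the form $\cY(\tilde{w}_1,x)(a_i w_2)$. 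The terms in the first family vanish under $\pi_0$ because $a_{-p-i}$ with $p+i\geq 1$ strictly raises the $\NN$-grading and hence maps out of $W_3(0)$. Each term of the second family involves the pair $(\tilde{w}_1, a_i w_2)$, whose total $\NN$-grade is at most $g(w_1)+g(w_2)-p < g(w_1)+g(w_2)$, so the inductive hypothesis applies coefficient-by-coefficient. (The sums over $i$ truncate because $a_i w_2 = 0$ for all sufficiently large $i$.)

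The base case $w_1 = m_1\in M_1$, $w_2 = m_2\in M_2$ requires showing that every coefficient of $\pi_0\cY(m_1,x)m_2$ lies in $N$, not merely the particular combination $\pi_0\cY(m_1,1)m_2 = \pi(\cY)(m_1\otimes m_2)$. The key observation is that $N$, being the image of a $\frg$-module homomorphism, is a $\frg$-submodule of $W_3(0)$, and since $[L_0,a_0]=0$ for every $a\in\frg$, it is automatically stable under the action of $L_0$ on $W_3(0)$. Using the standard $L_0$-conjugation formula for intertwining operators together with the decomposition of $W_3(0)$ into generalized $L_0$-eigenspaces, each individual $x^n(\log x)^j$-coefficient of $\pi_0\cY(m_1,x)m_2$ is realized by applying a power of the nilpotent part of $L_0$ to an $L_0$-generalized-eigencomponent of $\pi(\cY)(m_1\otimes m_2)$, and both operations preserve $N$. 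The main anticipated obstacle is purely technical, namely tracking the parity signs coming from the superalgebra structure and carefully expanding the $L_0$-conjugation in the logarithmic case when $L_0$ acts non-semisimply on $W_3(0)$; the essential geometric content---that negative modes strictly raise the $\NN$-grading while zero modes commute with $L_0$---is what makes the argument go through.
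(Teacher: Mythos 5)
Your proof is correct in its conclusion and uses the same two essential ingredients as the paper (the commutator formula \eqref{eqn:intw_op_comm} to absorb negative modes on $w_2$, the iterate formula \eqref{eqn:intwo_op_it} to absorb them on $w_1$, with $\pi_0$ killing the strictly-grade-raising terms), but it is organized differently in two ways. First, you track every coefficient of $x^n(\log x)^j$ in $\pi_0\cY(w_1,x)w_2$, whereas the paper works entirely at the substitution $x=1$ (with $\log 1 = 0$), showing only that $\pi_0(\cY(w_1,1)w_2)\in\Im\,\pi(\cY)$; this makes the paper's base case $\pi_0(\cY(m_1,1)m_2)=\pi(\cY)(m_1\otimes m_2)$ true by definition, with nothing left to check. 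Second, you run a single induction on the total $\NN$-grade $g(w_1)+g(w_2)$, whereas the paper runs a nested induction: first holding $w_1\in M_1$ fixed and inducting over $w_2\in\widehat{M}_2^k$, and then inducting over $w_1\in\widehat{M}_1^k$ while the statement already holds for all $w_2$. Both structures work; the paper's is leaner precisely because it never has to disentangle individual $x$-coefficients.

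That said, your base-case justification contains a genuine logical error. You argue that $N=\Im\,\pi(\cY)$ is $L_0$-stable ``since $[L_0,a_0]=0$ for every $a\in\frg$.'' But $[L_0,a_0]=0$ only says that $L_0|_{W_3(0)}$ is a $\frg$-module endomorphism of $W_3(0)$, and a $\frg$-submodule need not be stable under an arbitrary $\frg$-module endomorphism (take $W_3(0)=V\oplus V$ with $L_0$ swapping the two copies, $N=V\oplus 0$). The correct reason $N$ is $L_0$-stable is that in the Sugawara (or modified Sugawara) construction the positive-mode terms in $L_0$ annihilate $W_3(0)$, so $L_0|_{W_3(0)}$ equals the zero-mode part, i.e.\ a Casimir-like element of $\cU(\frg)$ acting via zero modes; any $\frg$-submodule is stable under $\cU(\frg)$. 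With that repair your argument is complete, but the paper's $x=1$ framing avoids having to make this point at all, which is a concrete reason to prefer it.
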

\begin{proof}
 Because $\cY$ is surjective, $W_3(0)$ is spanned by vectors of the form $\pi_0(\cY(w_1,1)w_2)$ for $w_1\in\widehat{M}_1^k$ and $w_2\in\widehat{M}_2^k$. Thus we need to show that
 \begin{equation*}
  \pi_0(\cY(w_1,1)w_2)\in\Span\lbrace \pi_0(\cY(m_1,1)m_2)\,\vert\,m_1\in M_1, m_2\in M_2\rbrace
 \end{equation*}
for all $w_1\in\widehat{M}_1^k$, $w_2\in\widehat{M}_2^k$. This is true by definition when $w_1\in M_1$ and $w_2\in M_2$.

Now assume that for some $w_2\in\widehat{M}_2^k$, we already know that $\pi_0(\cY(m_1,1)w_2)\in\Im\,\pi(\cY)$ for all $m_1\in M_1$. Then \eqref{eqn:intw_op_comm} implies that
\begin{align*}
 (-1)^{\vert a\vert\vert m_1\vert}\pi_0(\cY(m_1,1)a_{-r}w_2) & =(-1)^{\vert\cY\vert\vert a\vert} \pi_0(a_{-r}\cY(m_1,1)w_2)-\sum_{i\in\NN}\binom{-r}{i}\pi_0(\cY(a_i m_1,1)w_2)\nonumber\\
 & = -\pi_0(\cY(a_0 m_1,1)w_2)\in\Im\,\pi(\cY)
\end{align*}
for homogeneous $a\in\frg$, $m_1\in M_1$, and $r\in\ZZ_+$. Since $\widehat{M}_2^k$ is generated by $M_2$ under the action of the modes $a_{-r}$, this shows that $\pi_0(\cY(m_1,1)w_2)\in\Im\,\pi(\cY)$ for all $m_1\in M_1$ and all $w_2\in \widehat{M}_2^k$.

Now assume that for some homogeneous $w_1\in\widehat{M}_1^k$, we know  $\pi_0(\cY(w_1,1)w_2)\in\Im\,\pi(\cY)$ for all $w_2\in \widehat{M}_2^k$. Then \eqref{eqn:intwo_op_it} implies that
\begin{align*}
\pi_0( \cY( & a_{-r} w_1,1)w_2)\nonumber\\ &=(-1)^{\vert\cY\vert\vert a\vert}\sum_{i\geq 0}\binom{-r}{i} \pi_0(a_{-r-i}\cY(w_1,1)w_2)-(-1)^{\vert a\vert\vert w_1\vert}\sum_{i\geq 0}\binom{-r}{i}\pi_0(\cY(w_1,1)a_i w_2)\nonumber\\
& = -(-1)^{\vert a\vert\vert w_1\vert}\sum_{i\geq 0}\binom{-r}{i}\pi_0(\cY(w_1,1)a_i w_2)\in\Im\,\pi(\cY)
\end{align*}
for homogeneous $a\in\frg$ $r\in\ZZ_+$, and $w_2\in W_2$. Since $\widehat{M}_1^k$ is a generalized Verma module, this shows that $\pi_0(\cY(w_1,1)w_2)\in\Im\,\pi(\cY)$ for all $w_1\in \widehat{M}_1^k$, $w_2\in \widehat{M}^k_2$, as required.
\end{proof}

For $V^k(\frg)$-modules $\widehat{M}_1^k$, $\widehat{M}_2^k$ and $W_3$ as above, it turns out that the even linear map $\pi$ from intertwining operators of type $\binom{W_3}{\widehat{M}_1^k\,\widehat{M}_2^k}$ to $\Hom_\frg(M_1\otimes M_2, W_3(0))$ is an isomorphism if $W_3$ is the contragredient $(\widehat{M}_3^k)'$ of a generalized Verma module, in which case $W_3(0)=M_3^*$.
In fact, the following theorem is the affine Lie superalgebra case of (the superalgebra generalization of) \cite[Thm. 6.6]{HY}, which generalizes \cite[Thm. 2.11]{Li} and \cite[Lem. 2.19]{FZ2}  to logarithmic intertwining operators:
\begin{theorem}\label{thm:gen_fus_rules}
 Suppose $M_1$, $M_2$, and $M_3$ are finite-dimensional $\frg$-modules and $f: M_1\otimes M_2\rightarrow M_3^*$ is a $\frg$-module homomorphism. Then there is a unique intertwining operator $\cY$ of type $\binom{(\widehat{M}_3^k)'}{\widehat{M}_1^k\,\widehat{M}_2^k}$ such that $\pi(\cY)=f$.
\end{theorem}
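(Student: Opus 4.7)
My plan is to treat uniqueness and existence separately.

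For uniqueness, suppose $\cY_1, \cY_2$ are two intertwining operators of type $\binom{(\widehat{M}_3^k)'}{\widehat{M}_1^k\,\widehat{M}_2^k}$ with $\pi(\cY_1) = \pi(\cY_2)$; setting $\cY := \cY_1 - \cY_2$, we have $\pi(\cY) = 0$, and the goal is to show $\cY = 0$. It suffices to show that every matrix coefficient $\langle v, \cY(w_1, x) w_2 \rangle$ with $v \in \widehat{M}_3^k$, $w_1 \in \widehat{M}_1^k$, $w_2 \in \widehat{M}_2^k$ vanishes. The iterate formula \eqref{eqn:intwo_op_it} applied inductively to the PBW depth of $w_1$ reduces to $w_1 \in M_1$, and then the commutator formula \eqref{eqn:intw_op_comm} reduces further to $w_2 \in M_2$; an analogous commutation argument using the dual action \eqref{eqn:aff_contra} reduces $v$ to $M_3$. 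The remaining question is whether $\pi_0 \cY(m_1, x) m_2 \in M_3^*\{x\}[\log x]$ vanishes for $m_1 \in M_1$, $m_2 \in M_2$. But the $L_0$-commutator $[L_0, \cY(m_1, x)] = \cY(L_0 m_1, x) + x \tfrac{d}{dx} \cY(m_1, x)$ combined with the finite-dimensionality of $M_1, M_2, M_3^*$ forces this projection to be a finite sum of monomials of the form $x^{\Delta_3 - \Delta_1 - \Delta_2} (\log x)^j$ (with $j$ bounded by the $L_0$-nilpotency on $M_3^*$), whose full $x$-dependence is pinned down by the single value $\pi_0 \cY(m_1, 1) m_2 = \pi(\cY)(m_1 \otimes m_2) = 0$.

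For existence, I would reverse the uniqueness argument to construct $\cY$ from $f$. Step one: for $m_1 \in M_1$ and $m_2 \in M_2$, declare
\[
 \pi_0\cY(m_1, x) m_2 := x^{L_0^{(3)} - L_0^{(1)} - L_0^{(2)}} f(m_1 \otimes m_2) \in M_3^*\{x\}[\log x],
\]
with $L_0^{(i)}$ the $L_0$-action on $M_i$ (or $M_3^*$), its nilpotent part producing the requisite $\log x$-terms. Step two: use the commutator formula \eqref{eqn:intw_op_comm} as a \emph{definition} to extend $\cY(m_1, x) m_2$ from its lowest-weight projection to a full series in $(\widehat{M}_3^k)'\{x\}[\log x]$ (moving modes on the $(\widehat{M}_3^k)'$ side past $\cY$), and then use the iterate and commutator formulas, exactly as in the proof of Proposition \ref{prop:piY_surjective}, to extend $\cY(w_1, x) w_2$ to arbitrary $w_1 \in \widehat{M}_1^k$ and $w_2 \in \widehat{M}_2^k$. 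Step three: verify that the resulting $\cY$ is well-defined and satisfies the intertwining operator Jacobi identity.

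Step three is the main obstacle. Well-definedness requires checking that the various ways of applying the commutator and iterate formulas produce the same answer, and the full Jacobi identity must be propagated from the ``base case'' on $M_1 \otimes M_2 \otimes M_3$ (where it is immediate from $\frg$-equivariance of $f$ and the fact that the base formula intertwines $L_0$) outward to all of $\widehat{M}_1^k \otimes \widehat{M}_2^k$. This is precisely the content of \cite[Thm. 6.6]{HY} in the non-super setting, where it is carried out via generating-function manipulations and a double induction on PBW depth in the two input modules. I would follow that argument verbatim, inserting the Koszul sign factors dictated by \cite[Def. 3.7]{CKM} so that everything remains consistent in the superalgebra supercategory; no conceptually new ideas are required beyond careful sign bookkeeping.
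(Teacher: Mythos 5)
Your proposal matches the paper's approach: the paper also treats this statement as a special case of \cite[Thm.~6.6]{HY} (which generalizes \cite{Li} and \cite{FZ2}) and sketches the same construction --- recursive application of the commutator and iterate formulas \eqref{eqn:intw_op_comm}, \eqref{eqn:intwo_op_it} together with $L_0$-conjugation and the contragredient pairing \eqref{eqn:aff_contra}, defining $\cY$ first on $M_1\otimes M_2$ and then extending in the two affine variables as in Proposition \ref{prop:piY_surjective} and \cite{MY} --- while deferring well-definedness and the Jacobi identity to \cite{HY}. Your uniqueness argument just elaborates on the paper's one-line observation that the construction is forced by these same formulas, so the substance is the same.
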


To make the proof of \cite[Thm. 6.6]{HY} in the affine Lie superalgebra special case more concrete, we sketch a construction of $\cY$ from a homogeneous $f$. We first use \eqref{eqn:intw_op_comm} and \eqref{eqn:aff_contra} to define a sequence of linear maps
$$f_i: M_1\otimes M_2\rightarrow(\widehat{M}_3^k)'(i)=\widehat{M}_3^k(i)^*$$
recursively. We start with $f_0=f$, and then assuming $f_0,\ldots, f_{i-1}$ have been defined, we set
\begin{equation*}
 \langle f_i(m_1\otimes m_2), a_{-r} w_3\rangle =-(-1)^{\vert a\vert(\vert m_1\vert+\vert m_2\vert)}\langle f_{i-r}(a_0 m_1\otimes m_2),w_3\rangle
\end{equation*}
for homogeneous $m_1\in M_1$, $m_2\in M_2$, $a\in \frg$, $1\leq r\leq i$, and $w_3\in\widehat{M}_3^k(i-r)$. Once it is shown that the maps $f_i$ are well defined, we can then set
\begin{equation*}
 \cY(m_1,x)m_2 =\sum_{i\in\NN} x^{L_0}f_i(x^{-L_0} m_1\otimes x^{-L_0} m_2): M_1\otimes M_2\rightarrow(\widehat{M}_3^k)'[\log x]\lbrace x\rbrace.
\end{equation*}
Next, we use the method of \cite{MY} to extend $\cY$ first to $M_1\otimes\widehat{M}_2^k$, and then to $\widehat{M}_1^k\otimes\widehat{M}_2^k$. Both extensions are defined recursively: assuming we have already defined $\cY(m_1,x)w_2$ for $m_1\in M_1$, $w_2\in\widehat{M}_2^k$, we use the commutator formula \eqref{eqn:intw_op_comm} to define
\begin{align*}
 \cY(m_1,x)a_{-r}w_2 =(-1)^{\vert a\vert\vert m_1\vert}\left((-1)^{\vert\cY\vert\vert a\vert} a_{-r}\cY(w_1,x)w_2- x^{-r}\cY(a_0 m_1,x)w_2\right)
\end{align*}
for homogeneous $m_1\in M_1$, $a\in\frg$, and $r\in\ZZ_+$. Finally, assuming we have defined $\cY(w_1,x)w_2$ for some homogeneous $w_1\in\widehat{M}_1^k$ and all $w_2\in \widehat{M}^k_2$, we use \eqref{eqn:intwo_op_it} to define
\begin{align*}
 \cY(a_{-r} w_1,x)
 & =(-1)^{\vert f\vert \vert a\vert}\sum_{i\geq 0} \binom{-r}{i}(-x)^i a_{-r-i}\cY(w_1,x)\nonumber\\
 &\qquad-(-1)^{\vert a\vert\vert w_1\vert}\sum_{i\geq 0}\binom{-r}{i} (-x)^{-r-i}\cY(w_1,x)a_i
\end{align*}
for homogeneous $a\in\frg$ and $r\in\ZZ_+$. Once it is shown that these extensions are well defined, one can prove that $\cY$ is an intertwining operator, as in \cite[Thm. 6.2]{MY}. Note that $\cY$ has the same parity of $\vert f\vert$; the uniqueness of $\cY$ follows because the construction of $\cY$ is forced by the formulas \eqref{eqn:intw_op_comm}, \eqref{eqn:intwo_op_it}, and \eqref{eqn:contra_vrtx_op}.

\subsection{Fusion rules in $KL_k$ and $\cO_k^{fin}$}

In this section, we compute all tensor products of irreducible $V_k(\gl(1|1))$-modules. For fusion rules involving the irreducible modules $\widehat{A}^k_{0,\pm k}$, we will need the following lemma on the kernels of $\gl(1|1)$-module maps $\pi(\cY)$, where $\cY$ is an intertwining operator involving $\widehat{A}^k_{0,\pm k}$. In the statement, we use $v_{n,e}$ to denote a highest-weight vector in the $\gl(1|1)$-module $V_{n,e}$:
\begin{lemma}\label{lem:sing_vector} \hspace{2em}
 \begin{enumerate}
  \item Suppose $\cY$ is an even intertwining operator of type $\binom{W_3}{\widehat{A}^k_{0,k}\,W_2}$ where $W_2$ and $W_3$ are $\NN$-gradable $V_k(\gl(1|1))$-modules with $W_2(0)=V_{n',e'}$ for $e'\neq 0$. Then $v_{0,k}\otimes v_{n',e'}\in\ker\pi(\cY)$.

  \item Suppose $\cY$ is an even intertwining operator of type $\binom{W_3}{W_1\,\widehat{A}^k_{0,-k}}$ where $W_1$ and $W_3$ are $\NN$-gradable $V_k(\gl(1|1))$-modules with $W_1(0)=V_{n,e}$ for $e\neq 0$. Then $\psi^- v_{n,e}\otimes\psi^- v_{0,-k}\in\ker\pi(\cY)$.
 \end{enumerate}
\end{lemma}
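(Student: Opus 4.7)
The plan is to exploit singular vectors in the reducible Verma modules $\widehat{V}^k_{0,\pm k}$ that map to zero in their simple quotients $\widehat{A}^k_{0,\pm k}$. A direct computation using $\{\psi^+_r,\psi^-_s\} = E_{r+s}+r\mathbf{k}\delta_{r+s,0}$ verifies that $\psi^+_{-1}v_{0,k}$ is annihilated by $\psi^+_0$ and by every positive mode of $\widehat{\gl(1|1)}$, so (by \eqref{exactsequencesimple}) it generates the proper submodule $\widehat{A}^k_{1,k}\subset\widehat{V}^k_{0,k}$, giving $\psi^+_{-1}v_{0,k}=0$ in $\widehat{A}^k_{0,k}$. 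An analogous but slightly more intricate check shows that
\[
 w := \psi^-_{-1}v_{0,-k} + \tfrac{1}{k}E_{-1}\psi^-_0 v_{0,-k}
\]
is the singular vector generating the submodule $\widehat{A}^k_{-1,-k}\subset\widehat{V}^k_{0,-k}$; the coefficient $1/k$ is the unique value that cancels the two $E_{-1}v_{0,-k}$ contributions coming from $\psi^+_0 w$. Hence $w=0$ in $\widehat{A}^k_{0,-k}$.

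For part (1), the vanishing $\cY(\psi^+_{-1}v_{0,k},x)=0$ together with the iterate formula \eqref{eqn:intwo_op_it} (with $a=\psi^+$, $r=-1$) applied to $v_{n',e'}$ collapses, after using $\psi^+_i v_{n',e'}=0$ for $i\geq 0$, to
\[
 \sum_{i\geq 0} x^i\,\psi^+_{-1-i}\cY(v_{0,k},x)v_{n',e'} = 0.
\]
Setting $x=1$ and projecting onto $W_3(1)$ annihilates every $i\geq 1$ contribution (since $\psi^+_{-1-i}$ shifts $\NN$-grading by $i+1$ and $W_3(-i)=0$ for $i\geq 1$), leaving $\psi^+_{-1}u=0$ where $u:=\pi(\cY)(v_{0,k}\otimes v_{n',e'})\in W_3(0)$. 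By $\gl(1|1)$-equivariance of $\pi(\cY)$ and $\psi^+(v_{0,k}\otimes v_{n',e'})=0$, the vector $u$ is annihilated by $\psi^+_0$, and all positive affine modes act trivially on $W_3(0)$; thus $u$ is a $\widehat{\gl(1|1)}$-highest-weight vector of weight $(n'+1,k+e')$ and generates a submodule of $W_3$ that is a quotient of $\widehat{V}^k_{n'+\frac{1}{2},k+e'}$. In the latter Verma, $\psi^-_1\psi^+_{-1}v=(E_0-k)v=e'v$ is nonzero since $e'\neq 0$, so the submodule generated by $\psi^+_{-1}v$ contains $v$ and is the whole Verma. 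Consequently $\psi^+_{-1}u=0$ forces $u=0$.

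For part (2), since $w=0$ in $\widehat{A}^k_{0,-k}$ we have $\cY(\psi^-v_{n,e},x)w=0$. The commutator formula \eqref{eqn:intw_op_comm} moves $\psi^-_{-1}$ and $E_{-1}$ out of the second slot: using $\psi^-_i(\psi^-v_{n,e})=0$ for all $i\geq 0$ (which follows from $\{\psi^-,\psi^-\}=0$ and $\psi^-_i v_{n,e}=0$ for $i\geq 1$) together with $E_0(\psi^-v_{n,e})=e\,\psi^-v_{n,e}$ and $E_i(\psi^-v_{n,e})=0$ for $i\geq 1$, one obtains
\begin{align*}
 \cY(\psi^-v_{n,e},x)\psi^-_{-1}v_{0,-k} &= -\psi^-_{-1}\cY(\psi^-v_{n,e},x)v_{0,-k},\\
 \cY(\psi^-v_{n,e},x)E_{-1}\psi^-v_{0,-k} &= E_{-1}\cY(\psi^-v_{n,e},x)\psi^-v_{0,-k} - e x^{-1}\cY(\psi^-v_{n,e},x)\psi^-v_{0,-k}.
\end{align*}
Combining these via the formula for $w$, setting $x=1$, and projecting onto $W_3(0)$ kills the $\psi^-_{-1}$ and $E_{-1}$ terms (both raise $\NN$-grading by $1$ out of $W_3(-1)=0$), leaving $-\tfrac{e}{k}\,\pi(\cY)(\psi^-v_{n,e}\otimes\psi^-v_{0,-k}) = 0$; since $e,k\neq 0$, this gives the desired vanishing. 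The main obstacle is guessing and verifying the two-term singular vector $w$ in $\widehat{V}^k_{0,-k}$ (unlike the monomial $\psi^+_{-1}v_{0,k}$ in part (1)); beyond this, the argument reduces to bookkeeping of superalgebra signs in \eqref{eqn:intw_op_comm}--\eqref{eqn:intwo_op_it} and of the $\NN$-gradings.
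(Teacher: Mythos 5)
Your proof is correct and uses the same key idea as the paper's: the singular vectors $\psi^+_{-1}v_{0,k}$ and $(k\psi^-_{-1}+E_{-1}\psi^-_0)v_{0,-k}$ vanish in the quotients $\widehat{A}^k_{0,\pm k}$, and feeding this into the iterate/commutator formulas \eqref{eqn:intwo_op_it}, \eqref{eqn:intw_op_comm} forces the claimed vanishing after projecting onto the bottom $\NN$-graded piece. Your part (2) essentially coincides with the paper's computation. Your part (1) takes a minor variant route: the paper applies the iterate formula to $\cY(\psi^+_{-1}v_{0,k},1)\psi^-_0 v_{n',e'}$, so that after projecting onto $W_3(0)$ the surviving term is $\pi_0(\cY(v_{0,k},1)\psi^+_0\psi^-_0 v_{n',e'}) = e'\,\pi(\cY)(v_{0,k}\otimes v_{n',e'})$ in one stroke; you instead apply it to $\cY(\psi^+_{-1}v_{0,k},1)v_{n',e'}$, project onto $W_3(1)$ to get $\psi^+_{-1}u=0$ for $u=\pi(\cY)(v_{0,k}\otimes v_{n',e'})$, and then need the additional observation $\psi^-_1\psi^+_{-1}u=(E_0-k)u=e'u$ (which in effect repackages the paper's second term) to conclude $u=0$. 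Both routes are valid; the paper's choice of applying the iterate formula to $\psi^-_0 v_{n',e'}$ simply compresses your two steps into one.
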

\begin{proof}
 We use explicit singular vectors in $\widehat{V}^k_{0,\pm k}$: it is straightforward to show that
\begin{equation*}
 \psi_{-1}^+ v_{0,k}\in\widehat{V}^k_{0,k}\qquad\text{and}\qquad (k\psi^-_{-1}+E_{-1}\psi^-_0) v_{0,-k}\in\widehat{V}^k_{0,-k}
\end{equation*}
vanish in the irreducible quotients $\widehat{A}^k_{0,\pm k}$. Thus in the first case we use \eqref{eqn:intwo_op_it} to calculate
\begin{align*}
 0 & = \pi_0\left(\cY(\psi_{-1}^+ v_{0,k},1)\psi_0^- v_{n',e'}\right)\nonumber\\
 &=\pi_0\sum_{i\geq 0} \left(\psi^+_{-1-i}\cY(v_{0,k},1)\psi^-_0 v_{n',e'} +(-1)^{\vert\psi^+\vert\vert v_{0,k}\vert}\cY(v_{0,k},1)\psi^+_i \psi^-_0 v_{n',e'}\right)\nonumber\\
& = (-1)^{\vert v_{0,k}\vert}\pi_0\left(\cY(v_{0,k},1)E_0 v_{n',e'}\right) =e'(-1)^{\vert v_{0,k}\vert}\pi(\cY)(v_{0,k}\otimes v_{n',e'}).
\end{align*}
Since $e'\neq 0$, this shows that $v_{0,k}\otimes v_{n',e'}\in\ker\pi(\cY)$. For the second case, we use \eqref{eqn:intw_op_comm} to calculate
\begin{align*}
 0 & =\pi_0\left(\cY(\psi^-_0 v_{n,e},1)(k\psi^-_{-1}+E_{-1}\psi^-_0) v_{0,-k}\right)\nonumber\\
 & =k(-1)^{\vert\psi^-\vert(\vert\psi^-\vert+\vert v_{n,e}\vert)}\pi_0\bigg(\psi_{-1}^-\cY(\psi_0^- v_{n,e},1)v_{0,-k} -\sum_{i\geq 0} (-1)^i\cY(\psi^-_i\psi^-_0 v_{n,e},1)v_{0,-k}\bigg)\nonumber\\
 & \qquad +(-1)^{\vert E\vert(\vert\psi^-\vert+\vert v_{n,e}\vert)}\pi_0\bigg( E_{-1}\cY(\psi_0^- v_{n,e},1)\psi_0^- v_{0,-k}-\sum_{i\geq 0} (-1)^i\cY(E_i\psi_0^- v_{n,e},1)\psi^-_0 v_{0,-k}\bigg)\nonumber\\
 & =-e\,\pi(\cY)(\psi^- v_{n,e}\otimes\psi^- v_{0,-k}).
\end{align*}
Since $e\neq 0$, $\psi^- v_{n,e}\otimes\psi^- v_{0,-k}\in\ker\pi(\cY)$.
\end{proof}

First we compute the tensor products of atypical irreducible $V_k(\gl(1|1))$-modules:
\begin{theorem}\label{thm:atyp_atyp_fusion}
 The atypical irreducible $V_k(\gl(1|1))$-modules are simple currents with fusion rules
 \begin{equation*}
  \widehat{A}_{n,\ell k}^k\boxtimes\widehat{A}_{n',\ell' k}^k \cong \widehat{A}_{n+n'-\varepsilon(\ell,\ell'),(\ell+\ell')k}^k
 \end{equation*}
for $n,n'\in\CC$, $\ell,\ell'\in\ZZ$, where $\varepsilon(\ell,\ell')=\varepsilon(\ell)+\varepsilon(\ell')-\varepsilon(\ell+\ell')$.
\end{theorem}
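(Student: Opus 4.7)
The plan is to identify the lowest conformal weight space of $W := \widehat{A}^k_{n,\ell k}\boxtimes\widehat{A}^k_{n',\ell' k}$ as a $\gl(1|1)$-module, then conclude by generation and simplicity. By Remark \ref{rem:useful}, $W$ lies in $(\cO_k^{fin})_{(\ell+\ell')k}$.

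First I would lift the (tautologically surjective) tensor product intertwining operator $\cY_\boxtimes$ to a surjective intertwining operator $\widetilde{\cY}$ from $\widehat{M}_1^k\boxtimes\widehat{M}_2^k$, where $\widehat{M}_i^k$ is the induced module on the $1$-dimensional $A_n$ when $\ell_i=0$ (so $\widehat{M}_i^k=\widehat{A}^k_{n,0}$ itself, which is atypical and irreducible) and the generalized Verma $\widehat{V}^k_{n,\ell_i k}$ on the $2$-dimensional $V_{n,\ell_i k}$ when $\ell_i\neq 0$. Proposition \ref{prop:piY_surjective} then yields a surjective $\gl(1|1)$-homomorphism $\pi(\widetilde{\cY})\colon M_1\otimes M_2\twoheadrightarrow W_{\mathrm{top}}$ onto the lowest conformal weight space.

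Next I would analyze $M_1\otimes M_2$ directly. In the generic case $\ell,\ell',\ell+\ell'\neq 0$, computing the highest-weight vectors yields a decomposition
\[
V_{n,\ell k}\otimes V_{n',\ell' k}\;\cong\;V_{n+n'+\frac{1}{2},(\ell+\ell')k}\;\oplus\;V_{n+n'-\frac{1}{2},(\ell+\ell')k}
\]
as $\gl(1|1)$-modules (with analogous reducible analyses for $\ell+\ell'=0$ or $\ell=0$). The crucial input is that $\widetilde{\cY}$ must factor through the surjections $\widehat{V}^k_{n,\ell k}\twoheadrightarrow\widehat{A}^k_{n,\ell k}$ in each slot, which via the iterate and commutator formulas \eqref{eqn:intw_op_comm}--\eqref{eqn:intwo_op_it} forces $\pi(\widetilde{\cY})$ to annihilate an explicit combination of basis vectors built from the singular vectors that generate the maximal proper submodules of $\widehat{V}^k_{n,\ell k}$. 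These singular vectors can be produced by applying spectral flow $\sigma^\ell$ to the familiar atypical singular vector $\psi^-_0 v\in\widehat{V}^k_{n,0}$, in the same spirit as in Lemma \ref{lem:sing_vector}. The computation shows that this vanishing kills exactly one of the two Verma summands above, and tracking which summand survives as $\ell,\ell',\ell+\ell'$ range over the four sign combinations produces precisely the shift $-\varepsilon(\ell,\ell')$ in the $N$-eigenvalue label. Hence $W_{\mathrm{top}}\cong V_{n+n'-\varepsilon(\ell,\ell'),(\ell+\ell')k}$ in the typical case, and $A_{n+n'-\varepsilon(\ell,\ell')}$ when $\ell+\ell'=0$.

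For non-vanishing and the reverse inclusion, I would use Theorem \ref{thm:gen_fus_rules}: the natural $\gl(1|1)$-map onto the lowest weight space of $\widehat{A}^k_{n+n'-\varepsilon(\ell,\ell'),(\ell+\ell')k}$ lifts to an intertwining operator which, by the same singular-vector check run in reverse, descends to a nonzero operator of type $\binom{\widehat{A}^k_{n+n'-\varepsilon(\ell,\ell'),(\ell+\ell')k}}{\widehat{A}^k_{n,\ell k}\,\widehat{A}^k_{n',\ell' k}}$, hence a nonzero map $W\twoheadrightarrow\widehat{A}^k_{n+n'-\varepsilon(\ell,\ell'),(\ell+\ell')k}$. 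Finally, Lemma \ref{lem:usefullemma_1} (together with its straightforward analog in $(\cO_k^{fin})_e$ for $e\neq 0$ obtained from \eqref{eqn:Delta_n,e}) implies that $W$ is generated by $W_{\mathrm{top}}$ as a $\widehat{\gl(1|1)}$-module, so $W$ is a quotient of the simple atypical module with the prescribed lowest weight space, and the two surjections combine to give the desired isomorphism.

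The main obstacle is the singular-vector analysis: one must write down the generators of the maximal submodules of $\widehat{V}^k_{n,\ell k}$ for every $\ell\neq 0$, translate the vanishing of $\widetilde{\cY}$ on these submodules into a vanishing condition on $\pi(\widetilde{\cY})$ via \eqref{eqn:intw_op_comm}--\eqref{eqn:intwo_op_it}, and then check case by case that this kills the correct Verma summand. Assembling the sign cases is where the two-cocycle $\varepsilon(\ell,\ell')=\varepsilon(\ell)+\varepsilon(\ell')-\varepsilon(\ell+\ell')$ emerges.
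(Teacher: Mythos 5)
Your proposal takes a genuinely different route from the paper. The paper's proof is a bootstrapping argument: it first establishes the two special cases $\widehat{A}^k_{n,0}\boxtimes\widehat{A}^k_{n',\ell'k}$ and $\widehat{A}^k_{0,k}\boxtimes\widehat{A}^k_{0,-k}$, shows $\widehat{A}^k_{0,\pm k}$ are mutually inverse simple currents, and then reduces \eqref{eqn:ind_con} to identifying which of two Verma summands dies --- where simplicity of the tensor product (now known, since it is a product of simple currents) forces the kernel to be exactly one summand and Lemma~\ref{lem:sing_vector} (which involves only the \emph{level-one} singular vectors $\psi^+_{-1}v_{0,k}$ and $(k\psi^-_{-1}+E_{-1}\psi^-_0)v_{0,-k}$) picks out which. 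The general case then follows by associativity. You instead attack all $\ell,\ell'$ at once.

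The gap in your version is the singular-vector analysis for arbitrary $\ell$. The maximal submodule of $\widehat{V}^k_{n,\ell k}$ for $\ell\neq 0$ is generated by a singular vector at conformal weight $|\ell|$ above the bottom (since $\Delta_{n\pm 1,\ell k}-\Delta_{n,\ell k}=\pm\ell$), and these vectors grow rapidly in complexity; the paper deliberately only ever uses the two explicit level-one singular vectors in $\widehat{V}^k_{0,\pm k}$. Your suggestion to obtain them by applying $\sigma^\ell$ to $\psi^-_0 v\in\widehat{V}^k_{n,0}$ does not trivially produce an expression in the PBW basis of $\widehat{V}^k_{n,\ell k}$: the spectral-flow identification of the lowest weight space is not the identity on modes, so one must solve for both the new highest-weight vector and the image of $\psi^-_0 v$ inside the flowed module, and the resulting expression is at level $|\ell|$. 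Translating its vanishing under $\widetilde{\cY}$ into a constraint on $\pi(\widetilde{\cY})$ via iterated uses of \eqref{eqn:intw_op_comm}--\eqref{eqn:intwo_op_it} is therefore a substantial unfinished computation, and the same applies to the ``reverse'' check that the intertwining operator supplied by Theorem~\ref{thm:gen_fus_rules} actually descends to the atypical quotients. There is a further structural point your sketch leans on implicitly: knowing the kernel \emph{contains} a singular vector only gives an upper bound on $W_{\mathrm{top}}$, and the paper closes this with the simplicity of the tensor product, which in turn comes from the $\ell=\pm1$ simple-current step; you would need an independent argument that the kernel is precisely one Verma summand. None of this is insurmountable, but as written your plan replaces a single clean reduction by associativity with an open-ended case-by-case singular-vector computation. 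I would recommend adopting the paper's reduction: prove the $\ell=0$ and $\ell,\ell'=\pm1$ cases with the explicit level-one singular vectors, then bootstrap.
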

\begin{proof}
 We will prove two special cases of the fusion rules: the $\ell=0$ case
 \begin{equation}\label{eqn:ell=0_case}
  \widehat{A}_{n,0}^k\boxtimes\widehat{A}_{n',\ell'k}^k\cong\widehat{A}^k_{n+n',\ell'k},
 \end{equation}
and also
\begin{equation}\label{eqn:ell_case}
 \widehat{A}_{-\frac{\ell}{2}+\varepsilon(\ell),\ell k}^k\boxtimes\widehat{A}_{-\frac{\ell'}{2}+\varepsilon(\ell'),\ell' k}^k\cong \widehat{A}_{-\frac{\ell+\ell'}{2}+\varepsilon(\ell+\ell'),(\ell+\ell')k}^k
\end{equation}
for $\ell,\ell'\in\ZZ$. The general formula then follows from these by associativity and commutativity of tensor products:
\begin{align*}
 \widehat{A}_{n,\ell k}^k\boxtimes\widehat{A}_{n',\ell' k}^k & \cong (\widehat{A}^k_{n+\frac{\ell}{2}-\varepsilon(\ell),0}\boxtimes\widehat{A}_{-\frac{\ell}{2}+\varepsilon(\ell),\ell k}^k)\boxtimes(\widehat{A}_{n'+\frac{\ell'}{2}-\varepsilon(\ell'),0}^k\boxtimes\widehat{A}_{-\frac{\ell'}{2}+\varepsilon(\ell'),\ell' k}^k)\nonumber\\
 & \cong(\widehat{A}_{n+\frac{\ell}{2}-\varepsilon(\ell),0}^k\boxtimes\widehat{A}_{n'+\frac{\ell'}{2}-\varepsilon(\ell'),0}^k)\boxtimes(\widehat{A}_{-\frac{\ell}{2}+\varepsilon(\ell),\ell k}^k\boxtimes\widehat{A}_{-\frac{\ell'}{2}+\varepsilon(\ell'),\ell' k}^k)\nonumber\\
 & \cong \widehat{A}^k_{n+n'+\frac{\ell+\ell'}{2}-\varepsilon(\ell)-\varepsilon(\ell'),0}\boxtimes\widehat{A}^k_{-\frac{\ell+\ell'}{2}+\varepsilon(\ell+\ell'),(\ell+\ell')k}\nonumber\\
 & \cong\widehat{A}^k_{n+n'-\varepsilon(\ell+\ell'),(\ell+\ell')k}
\end{align*}
for $n,n'\in\CC$ and $\ell,\ell\in\ZZ$.

To prove \eqref{eqn:ell=0_case}, we first take $\ell'=0$. Then Proposition \ref{prop:piY_surjective} applied to the surjective tensor product intertwining operator yields a (non-zero) surjective $\gl(1|1)$-module homomorphism
\begin{equation*}
 A_n\otimes A_{n'}\cong A_{n+n'}\twoheadrightarrow(\widehat{A}^k_{n,0}\boxtimes\widehat{A}^k_{n',0})(0)=(\widehat{A}^k_{n,0}\boxtimes\widehat{A}^k_{n',0})_{[0]}
\end{equation*}
(recall also Lemma \ref{lem:usefullemma_1} and Remark \ref{rem:useful}). The universal property of induced $\widehat{\gl(1|1)}$-modules then leads to a non-zero homomorphism $\widehat{A}^k_{n+n',0}\rightarrow\widehat{A}^k_{n,0}\boxtimes\widehat{A}^k_{n',0}$ which is injective because $\widehat{A}^k_{n+n',0}$ is simple. It is also surjective because Lemma \ref{lem:usefullemma_1} says that $\widehat{A}^k_{n,0}\boxtimes\widehat{A}^k_{n',0}$ is generated by its weight-$0$ subspace, so we have proved the $\ell'=0$ case of \eqref{eqn:ell=0_case}. This also shows that $\widehat{A}^k_{n,0}$ is a simple current for $n\in\CC$, with tensor inverse $\widehat{A}^k_{-n,0}$.

For $\ell'\neq 0$, we now know that $\widehat{A}^k_{n,0}\boxtimes\widehat{A}^k_{n',\ell' k}$ is simple because $\widehat{A}^k_{n,0}$ is a simple current. Since Proposition \ref{prop:piY_surjective} gives a surjective $\gl(1|1)$-module homomorphism
\begin{equation*}
 A_n\otimes V_{n',\ell'k}\cong V_{n+n',\ell'k}\twoheadrightarrow(\widehat{A}^k_{n,0}\boxtimes\widehat{A}^k_{n',\ell'k})(0),
\end{equation*}
it follows that $\widehat{A}^k_{n,0}\boxtimes\widehat{A}^k_{n',\ell'k}$ is the unique irreducible quotient $\widehat{A}^k_{n+n',\ell'k}$ of $\widehat{V}^k_{n+n',\ell'k}$. This finishes the proof of  \eqref{eqn:ell=0_case}.

For \eqref{eqn:ell_case}, we first take $\ell=1$ and $\ell'=-1$. In this case the tensor product intertwining operator $\cY$ of type $\binom{\widehat{A}^k_{0,k}\boxtimes\widehat{A}^k_{0,-k}}{\widehat{A}^k_{0,k}\,\widehat{A}^k_{0,-k}}$ induces a surjective $\gl(1|1)$-homomorphism
\begin{equation*}
 \pi(\cY): V_{0,k}\otimes V_{0,-k}\cong P_0\twoheadrightarrow(\widehat{A}^k_{0,k}\boxtimes\widehat{A}^k_{0,-k})(0)=(\widehat{A}^k_{0,k}\boxtimes\widehat{A}^k_{0,-k})_{[0]}
\end{equation*}
(again recall Lemma \ref{lem:usefullemma_1} and Remark \ref{rem:useful}). The universal property of induced modules then yields a homomorphism $\Pi: \widehat{P}_0^k\rightarrow\widehat{A}^k_{0,k}\boxtimes\widehat{A}^k_{0,-k}$ which is surjective by Lemma \ref{lem:usefullemma_1}. It will induce the required isomorphism $\widehat{A}^k_{0,0}\cong\widehat{A}^k_{0,k}\boxtimes\widehat{A}^k_{0,-k}$ if $\ker\Pi$ is the (unique) maximal proper submodule of $\widehat{P}_0^k$. To prove this, we just need to show that $\ker \pi(\cY)$ contains the unique maximal proper submodule of $P_0$. In fact, Lemma \ref{lem:sing_vector} says that
\begin{equation*}
 v_{0,k}\otimes v_{0,-k},\,\psi^-v_{0,k}\otimes\psi^-v_{0,-k}\in\ker\pi(\cY),
\end{equation*}
and it is easy to see that these two vectors generate the maximal proper submodule of $V_{0,k}\otimes V_{0,-k}\cong P_0$. This proves the $\ell=1$, $\ell'=-1$ case of \eqref{eqn:ell_case}, and we have also now shown that $\widehat{A}^k_{0,\pm k}$ are mutually inverse simple currents.

Now since $\widehat{A}^k_{0,k}$ generates a group of simple currents, \eqref{eqn:ell_case} for general $\ell,\ell'\in\ZZ$ will follow if we can show that
\begin{equation*}
 (\widehat{A}^k_{0,k})^{\boxtimes\ell} \cong \widehat{A}^k_{-\frac{\ell}{2}+\varepsilon(\ell),\ell k}
\end{equation*}
for $\ell\in\ZZ$. Since this relationship holds for $\ell=-1,0,1$, it will hold in general by induction on $\ell$ if we can show that
\begin{equation}\label{eqn:ind_con}
 \widehat{A}^k_{0,k}\boxtimes\widehat{A}^k_{-\frac{\ell}{2}+\frac{1}{2},\ell k} \cong  \widehat{A}^k_{-\frac{\ell}{2},(\ell+1) k}\quad\text{and}\quad\widehat{A}^k_{\frac{\ell}{2}-\frac{1}{2},-\ell k}\boxtimes\widehat{A}^k_{0,-k}\cong\widehat{A}^k_{\frac{\ell}{2},-(\ell+1)k}
\end{equation}
for $\ell\in\ZZ_+$.

To prove \eqref{eqn:ind_con}, we first note that both tensor product modules are simple because $\widehat{A}^k_{0,\pm k}$ are simple currents. Then Proposition \ref{prop:piY_surjective} again shows that the minimal conformal weight spaces of the tensor product modules are $\gl(1|1)$-module quotients of
\begin{equation*}
 V_{0,k}\otimes V_{-\frac{\ell}{2}+\frac{1}{2},\ell k}\cong V_{-\frac{\ell}{2}+1,(\ell+1)k}\oplus V_{-\frac{\ell}{2},(\ell+1) k}
 \end{equation*}
 and
 \begin{equation*}
 V_{\frac{\ell}{2}-\frac{1}{2},-\ell k}\otimes V_{0,-k}\cong V_{\frac{\ell}{2},-(\ell+1)k}\oplus V_{\frac{\ell}{2}-1,-(\ell+1)k},
 \end{equation*}
respectively. Thus we just need to show that the kernels of the respective $\gl(1|1)$-surjections $\pi(\cY)$ agree with $V_{-\frac{\ell}{2}+1,(\ell+1)k}$ and $V_{\frac{\ell}{2}-1,-(\ell+1)k}$. For the first case, Lemma \ref{lem:sing_vector}(1) shows that $v_{0,k}\otimes v_{-\frac{\ell}{2}+\frac{1}{2},\ell k}\in\ker\pi(\cY)$; this is a highest-weight vector generating $V_{-\frac{\ell}{2}+1,(\ell+1)k}$, so the first case of \eqref{eqn:ind_con} is proved. For the second case, Lemma \ref{lem:sing_vector}(2) shows that $\psi_0^- v_{\frac{\ell}{2}-\frac{1}{2},-\ell k}\otimes\psi_0^- v_{0,-k}\in\ker\pi(\cY)$. As this is a lowest-weight vector generating $V_{\frac{\ell}{2}-1,-(\ell+1)k}$, the second case of \eqref{eqn:ind_con} is proved. This completes the proof of the theorem.
%
%
%
%
\end{proof}

Next, we compute the tensor products of atypical with typical irreducible $V_k(\gl(1|1))$-modules:
\begin{theorem}\label{thm:atyp_typ_fusion}
 For $n,n'\in\CC$, $\ell\in\ZZ$, and $e'/k\notin\ZZ$,
 \begin{equation*}
  \widehat{A}^k_{n,\ell k}\boxtimes\widehat{V}^k_{n',e'}\cong\widehat{V}^k_{n+n'-\varepsilon(\ell), e'+\ell k}.
 \end{equation*}
\end{theorem}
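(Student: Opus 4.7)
My plan is to exploit the simple-current structure established in Theorem \ref{thm:atyp_atyp_fusion} to reduce to three base cases---$\ell=0$ and $\ell=\pm 1$---and then propagate to all $\ell$ by associativity. First, since each $\widehat{A}^k_{n,\ell k}$ is a simple current, the tensor product $\widehat{A}^k_{n,\ell k}\boxtimes\widehat{V}^k_{n',e'}$ is a simple module, and Remark \ref{rem:useful} places it in $(\cO_k^{fin})_{e'+\ell k}$. Because $(e'+\ell k)/k\notin\ZZ$, it must be a typical irreducible $\widehat{V}^k_{m,e'+\ell k}$ for some $m\in\CC$ whose lowest-conformal-weight space is $V_{m,e'+\ell k}$; thus it suffices to compute the top level and read off $m$.

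For $\ell=0$, Proposition \ref{prop:piY_surjective} applied to the canonical tensor product intertwining operator $\cY$ yields a surjective $\gl(1|1)$-homomorphism
\[
\pi(\cY)\colon A_n\otimes V_{n',e'}\cong V_{n+n',e'}\twoheadrightarrow\bigl(\widehat{A}^k_{n,0}\boxtimes\widehat{V}^k_{n',e'}\bigr)(0).
\]
The target is a nonzero irreducible $\gl(1|1)$-module (typical, since $e'\neq 0$) and the source is irreducible, so the map is an isomorphism, forcing $m=n+n'$.

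For $\ell=1$, Proposition \ref{prop:piY_surjective} supplies a surjection $\pi(\cY)\colon V_{0,k}\otimes V_{n',e'}\twoheadrightarrow\bigl(\widehat{A}^k_{0,k}\boxtimes\widehat{V}^k_{n',e'}\bigr)(0)$. A direct calculation of the two highest-weight vectors in the source decomposes it as a $\gl(1|1)$-module,
\[
V_{0,k}\otimes V_{n',e'}\cong V_{n'+\tfrac{1}{2},k+e'}\oplus V_{n'-\tfrac{1}{2},k+e'},
\]
with both summands irreducible since $(k+e')/k\notin\ZZ$, the first being generated by $v_{0,k}\otimes v_{n',e'}$. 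By Lemma \ref{lem:sing_vector}(1) this generator lies in $\ker\pi(\cY)$, so the first summand is annihilated and the induced map $V_{n'-\frac{1}{2},k+e'}\to\bigl(\widehat{A}^k_{0,k}\boxtimes\widehat{V}^k_{n',e'}\bigr)(0)$ is an isomorphism, giving $m=n'-\tfrac{1}{2}$. The case $\ell=-1$ is handled symmetrically: using braided commutativity, $\widehat{A}^k_{0,-k}\boxtimes\widehat{V}^k_{n',e'}\cong\widehat{V}^k_{n',e'}\boxtimes\widehat{A}^k_{0,-k}$, and Lemma \ref{lem:sing_vector}(2) places $\psi^-v_{n',e'}\otimes\psi^-v_{0,-k}$ in the kernel of the corresponding $\pi(\cY)$; this vector generates the summand $V_{n'-\frac{1}{2},e'-k}$ of $V_{n',e'}\otimes V_{0,-k}$ under the $\psi^+$-action, forcing $m=n'+\tfrac{1}{2}$.

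Finally, I invoke Theorem \ref{thm:atyp_atyp_fusion} to write $\widehat{A}^k_{n,\ell k}\cong\widehat{A}^k_{n+\ell/2-\varepsilon(\ell),0}\boxtimes\widehat{A}^k_{-\ell/2+\varepsilon(\ell),\ell k}$, together with the identity $\widehat{A}^k_{-\ell/2+\varepsilon(\ell),\ell k}\cong(\widehat{A}^k_{0,k})^{\boxtimes\ell}$ proved there (with the natural convention $(\widehat{A}^k_{0,k})^{\boxtimes(-|\ell|)}=(\widehat{A}^k_{0,-k})^{\boxtimes|\ell|}$). An induction on $|\ell|$ using associativity and the $\ell=\pm 1$ result yields
\[
\widehat{A}^k_{-\ell/2+\varepsilon(\ell),\ell k}\boxtimes\widehat{V}^k_{n',e'}\cong\widehat{V}^k_{n'-\ell/2,e'+\ell k}.
\]
Tensoring on the left by $\widehat{A}^k_{n+\ell/2-\varepsilon(\ell),0}$ and applying the $\ell=0$ formula completes the proof with $m=n+n'-\varepsilon(\ell)$. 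I expect the main technical step to be the $\ell=\pm 1$ computation: correctly decomposing $V_{0,\pm k}\otimes V_{n',e'}$ into two irreducible Vermas and invoking Lemma \ref{lem:sing_vector} to decide \emph{which} summand is annihilated by $\pi(\cY)$---the rest of the argument is then essentially bookkeeping with associativity and the simple-current identity.
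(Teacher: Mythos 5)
Your proposal is correct and follows essentially the same route as the paper's proof: establish the base cases $\ell=0$ and $\ell=\pm 1$ via Proposition \ref{prop:piY_surjective} together with Lemma \ref{lem:sing_vector}, then propagate to general $\ell$ via the simple-current decomposition from Theorem \ref{thm:atyp_atyp_fusion} and associativity. The only cosmetic difference is the order of bookkeeping in the reduction (the paper applies the $\ell=0$ fusion first and then iterates $\ell=\pm 1$, whereas you iterate $\ell=\pm 1$ first and finish with $\ell=0$).
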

\begin{proof}
 We will prove the special cases
 \begin{equation}\label{eqn:ell=0_case_2}
  \widehat{A}^k_{n,0}\boxtimes\widehat{V}^k_{n',e'}\cong\widehat{V}^k_{n+n',e'}
 \end{equation}
and
\begin{equation}\label{eqn:ell_case_2}
 \widehat{A}^k_{0,\pm k}\boxtimes\widehat{V}^k_{n',e'}\cong\widehat{V}^k_{n'\mp\frac{1}{2},e'\pm k}.
\end{equation}
The general case then follows from these fusion rules together with associativity of the tensor product and the fusion rules for atypical modules from Theorem \ref{thm:atyp_atyp_fusion}:
\begin{align*}
 \widehat{A}^k_{n,\ell k}\boxtimes \widehat{V}^k_{n',e'} & \cong (\widehat{A}^k_{-\frac{\ell}{2}+\varepsilon(\ell),\ell k}\boxtimes \widehat{A}^k_{n+\frac{\ell}{2}-\varepsilon(\ell), 0})\boxtimes \widehat{V}^k_{n',e'}\nonumber\\
 & \cong(\widehat{A}^k_{0,\pm k})^{\boxtimes\vert\ell\vert}\boxtimes \widehat{V}^k_{n+n'+\frac{\ell}{2}-\varepsilon(\ell),e'}\nonumber\\
 &\cong \widehat{V}^k_{n+n'+\frac{\ell}{2}-\varepsilon(\ell)\mp\frac{\vert\ell\vert}{2}, e'\pm\vert\ell\vert k} = \widehat{V}^k_{n+n'-\varepsilon(\ell),e'+\ell k}
\end{align*}
for all $n,n'\in\CC$, $\ell\in\ZZ$, and $e'/k\notin\ZZ$.

For \eqref{eqn:ell=0_case_2}, Proposition \ref{prop:piY_surjective} yields a surjective $\gl(1|1)$-homomorphism
\begin{equation*}
 A_n\otimes V_{n',e'}\cong V_{n+n',e'}\twoheadrightarrow (\widehat{A}^k_{n,0}\boxtimes\widehat{V}^k_{n',e'})(0),
\end{equation*}
which lifts to a non-zero map $\Pi: \widehat{V}^k_{n+n',e'}\rightarrow\widehat{A}^k_{n,0}\boxtimes\widehat{V}^k_{n',e'}$ by the universal property of induced $\widehat{\gl(1|1)}$-modules. Since $e'/k\notin\ZZ$ and $\widehat{A}^k_{n,0}$ is a simple current, both domain and codomain of $\Pi$ are simple; therefore $\Pi$ is an isomorphism.

For \eqref{eqn:ell_case_2}, Proposition \ref{prop:piY_surjective} yields surjective $\gl(1|1)$-homomorphisms
\begin{align*}
\pi(\cY_+)  :   V_{0, k}\otimes V_{n',e'} & \cong V_{n'+\frac{1}{2},e'+ k}\oplus V_{n'-\frac{1}{2},e'+ k}\twoheadrightarrow(\widehat{A}^k_{0, k}\boxtimes\widehat{V}^k_{n',e'})(0)\nonumber\\
\pi(\cY_-)  :  V_{n',e'}\otimes V_{0,-k} & \cong V_{n'+\frac{1}{2},e'-k}\oplus V_{n'-\frac{1}{2},e'-k}\twoheadrightarrow(\widehat{V}_{n',e'}^k\boxtimes\widehat{A}^k_{0,-k})(0).
\end{align*}
The modules $\widehat{A}^k_{0, \pm k}\boxtimes\widehat{V}^k_{n',e'}$ are again irreducible, so it is enough to determine the kernels of $\pi(\cY_{\pm})$. Since $e'\neq 0$, Lemma \ref{lem:sing_vector}(1) shows that $v_{0,k}\otimes v_{n',e'}\in\ker\pi(\cY_+)$, and thus $\ker\pi(\cY_+)=V_{n'+\frac{1}{2},e'+k}$, while Lemma \ref{lem:sing_vector}(2) shows that $\psi^- v_{n',e'}\otimes\psi^- v_{0,-k}\in\ker\pi(\cY_-)$, and thus $\ker\pi(\cY_-)=V_{n'-\frac{1}{2},e'-k}$. This completes the proof of \eqref{eqn:ell_case_2} and of the theorem.
%
%
%
\end{proof}

Finally, we compute products of typical irreducible $V_k(\gl(1|1))$-modules, although the following theorem also incorporates some fusion rules involving reducible Verma modules:
\begin{theorem}\label{thm:typ_typ_fusion}
For $n,n'\in\CC$,
 \begin{equation*}
  \widehat{V}^k_{n,e}\boxtimes\widehat{V}^k_{n',e'}\cong\left\lbrace\begin{array}{lll}
  \widehat{V}_{n+n'+\frac{1}{2},e+e'}^k\oplus\widehat{V}_{n+n'-\frac{1}{2},e+e'}^k & \text{if} & (e+e')/k\notin\ZZ \\
  \widehat{P}^k_{n+n'} & \text{if} & e+e'=0 \\
         \widehat{P}^k_{ n+n'+\varepsilon((e+e')/k),e+e'} & \text{if} & (e+e')/k\in\ZZ\setminus\lbrace 0\rbrace  \,\,\text{and}\,\,
                 e'/k\notin\ZZ\\                                                   \end{array}
\right. .
 \end{equation*}
\end{theorem}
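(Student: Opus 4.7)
The proof proceeds by three cases according to the value of $(e+e')/k$; in all cases set $T:=\widehat{V}^k_{n,e}\boxtimes\widehat{V}^k_{n',e'}$, and the main engine is Theorem \ref{thm:gen_fus_rules}, which together with the contragredient identifications of Proposition \ref{prop:contra} converts Hom computations out of $T$ into $\gl(1|1)$-Hom computations. The four-dimensional $\gl(1|1)$-module $V_{n,e}\otimes V_{n',e'}$ has $N$-eigenvalues $n+n'+1,\ n+n',\ n+n',\ n+n'-1$ and $E$-eigenvalue $e+e'$: when $e+e'\neq 0$ the highest-weight vector $v\otimes v'$ generates a Verma submodule and the short exact sequence splits (since $E$ acts invertibly), giving $V_{n+n'+\frac{1}{2},e+e'}\oplus V_{n+n'-\frac{1}{2},e+e'}$; when $e+e'=0$ but $e\neq 0$, a direct socle computation (the unique $\psi^\pm$-annihilated weight-$(n+n')$ vector is $v\otimes\psi^- v'+(-1)^{|v|}\psi^- v\otimes v'$, with uniqueness forced by $e\neq 0$) shows $V_{n,e}\otimes V_{n',-e}\cong P_{n+n'}$.

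For Case 1, $(e+e')/k\notin\ZZ$: every simple object of $(\cO_k^{fin})_{e+e'}$ is a typical Verma, which is projective by Lemmas \ref{typicaldifferent}--\ref{typicalself} and whose contragredient, by Proposition \ref{prop:contra}, is again a typical Verma and hence projective, so typical Vermas are also injective. Thus the finite-length module $T$ is a direct sum of typical Vermas, with multiplicities
\[
\dim\Hom(T,\widehat{V}^k_{n'',e+e'})=\dim\Hom_{\gl(1|1)}(V_{n,e}\otimes V_{n',e'},V_{n'',e+e'})=\delta_{n'',n+n'+\frac{1}{2}}+\delta_{n'',n+n'-\frac{1}{2}},
\]
yielding the claimed direct sum.

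For Case 2, $e+e'=0$: Theorem \ref{thm:gen_fus_rules} gives $\dim\Hom(T,\widehat{A}^k_{n'',0})=\dim\Hom_{\gl(1|1)}(P_{n+n'},A_{n''})=\delta_{n'',n+n'}$, so the head of $T$ is $\widehat{A}^k_{n+n',0}$. Projectivity of $\widehat{P}^k_{n+n'}$ gives a lift $f:\widehat{P}^k_{n+n'}\to T$ of the canonical surjection $T\twoheadrightarrow\widehat{A}^k_{n+n',0}$, and $f$ is forced to be surjective (else the cokernel would supply a second independent map $T\to\widehat{A}^k_{n+n',0}$). For injectivity, Theorem \ref{thm:gen_fus_rules} yields $\dim\Hom(T,\widehat{P}^k_{n+n'})=\dim\Hom_{\gl(1|1)}(P_{n+n'},P_{n+n'})=2$, while the universal property of induced modules together with the fact that the subspace of $\widehat{P}^k_{n+n'}$ annihilated by all positive modes coincides with its lowest conformal weight space $P_{n+n'}$ gives $\dim\End(\widehat{P}^k_{n+n'})=2$. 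The composition map $\phi\mapsto\phi\circ f$ from $\Hom(T,\widehat{P}^k_{n+n'})$ to $\End(\widehat{P}^k_{n+n'})$ is injective because $f$ is surjective, hence bijective by dimension count; the preimage of $\id_{\widehat{P}^k_{n+n'}}$ provides a section of $f$, and indecomposability of $\widehat{P}^k_{n+n'}$ forces $f$ to be an isomorphism.

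For Case 3, $(e+e')/k=\ell\in\ZZ\setminus\{0\}$ (which, combined with $e'/k\notin\ZZ$, also forces $e/k\notin\ZZ$): tensor $T$ with the simple current $\widehat{A}^k_{0,-\ell k}$, which is inverse to $\widehat{A}^k_{0,\ell k}$ by Theorem \ref{thm:atyp_atyp_fusion}. Theorem \ref{thm:atyp_typ_fusion} yields $\widehat{A}^k_{0,-\ell k}\boxtimes T\cong\widehat{V}^k_{n-\varepsilon(-\ell),e-\ell k}\boxtimes\widehat{V}^k_{n',e'}$, whose $E$-eigenvalues sum to zero, so Case 2 gives $\widehat{A}^k_{0,-\ell k}\boxtimes T\cong\widehat{P}^k_{n+n'-\varepsilon(-\ell)}$. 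Tensoring back with $\widehat{A}^k_{0,\ell k}$ and using that the auto-equivalence $\widehat{A}^k_{0,\ell k}\boxtimes-$ sends the projective cover of $\widehat{A}^k_{m,0}$ to the projective cover of $\widehat{A}^k_{m,0}\boxtimes\widehat{A}^k_{0,\ell k}\cong\widehat{A}^k_{m,\ell k}$ (using $\varepsilon(0,\ell)=0$ and Theorem \ref{projectivecovers}), one obtains $T\cong\widehat{P}^k_{n+n'-\varepsilon(-\ell),\ell k}$, which equals $\widehat{P}^k_{n+n'+\varepsilon(\ell),\ell k}$ by the identity $\varepsilon(\ell)=-\varepsilon(-\ell)$ for $\ell\neq 0$. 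The central obstacle is the injectivity step in Case 2, which crucially exploits the two-dimensional endomorphism ring of $\widehat{P}^k_{n+n'}$ arising from its length-$4$ logarithmic structure.
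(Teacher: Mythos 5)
Your proof is correct, but in Cases 1 and 2 it takes a genuinely different route from the paper's, which handles $(e+e')/k\notin\ZZ\setminus\{0\}$ by a single unified argument. The paper sets $\widehat{P}^k$ equal to the appropriate sum of generalized Verma modules (two affine Vermas if $e+e'\neq 0$, the module $\widehat{P}^k_{n+n'}$ if $e+e'=0$), uses Theorem \ref{thm:gen_fus_rules} and the universal property of $\boxtimes$ to produce a surjection $\Pi: T\to\widehat{P}^k$, splits $\Pi$ by projectivity of $\widehat{P}^k$, and then kills the complement $W$ by the dimension count $\dim T(0)\le\dim(V_{n,e}\otimes V_{n',e'})=4=\dim\widehat{P}^k(0)$ coming from Proposition \ref{prop:piY_surjective}. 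You instead argue in the \emph{opposite direction}: in Case 1 you observe that every simple in the block $(\cO_k^{fin})_{e+e'}$ is both projective and (via Proposition \ref{prop:contra} and exactness of the contragredient) injective, so $T$ is forced to be semisimple and its decomposition is read off from a $\Hom$-count; in Case 2 you build a surjection $f:\widehat{P}^k_{n+n'}\to T$ from projectivity of $\widehat{P}^k_{n+n'}$ and then force injectivity from the equality $\dim\Hom(T,\widehat{P}^k_{n+n'})=\dim\End(\widehat{P}^k_{n+n'})=2$, where the latter follows from the universal property of induced modules (using that the degree-$0$ subspace of a generalized Verma module is precisely the annihilator of the positive modes). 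The paper's version is more economical; yours isolates the structural inputs (injectivity of typical Vermas, the two-dimensional endomorphism ring of $\widehat{P}^k_{n+n'}$) more explicitly. Case 3 in both is a simple-current transfer from Case 2; the paper proves $\widehat{A}^k_{\varepsilon(\ell),\ell k}\boxtimes\widehat{P}^k_n\cong\widehat{P}^k_{n+\varepsilon(\ell),\ell k}$ by a direct length and projectivity argument, whereas you phrase the same fact as the statement that the monoidal autoequivalence $\widehat{A}^k_{0,\ell k}\boxtimes-$ sends projective covers to projective covers, which is slightly cleaner but equivalent.

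One small point worth flagging: your $\Hom$-computation $\dim\Hom(T,\widehat{A}^k_{n'',0})=\dim\Hom_{\gl(1|1)}(P_{n+n'},A_{n''})$ via Theorem \ref{thm:gen_fus_rules} is legitimate only because the atypical irreducibles $\widehat{A}^k_{n'',0}$ happen to \emph{be} generalized Verma modules (induced from the one-dimensional $A_{n''}$), and hence contragredients of generalized Verma modules; this is implicit in the paper's notation and exact sequence \eqref{extatypical} but deserves to be said, since Theorem \ref{thm:gen_fus_rules} applies only to targets of the form $(\widehat{M}^k_3)'$. With that remark made explicit, the argument is complete.
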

\begin{proof}
 It is straightforward to compute that, as $\gl(1|1)$-modules,
 \begin{equation*}
  V_{n,e}\otimes V_{n',e'}\cong\left\lbrace\begin{array}{lll}
                                            V_{n+n'+\frac{1}{2},e+e'}\oplus V_{n+n'-\frac{1}{2},e+e'} & \text{if} & e+e'\neq 0\\
                                            P_{n+n'} & \text{if} & e+e'=0\\
                                           \end{array}
\right. .
 \end{equation*}
In particular, $V_{n,e}\otimes V_{n',e'}\cong\widehat{P}^k(0)$, where $\widehat{P}^k$ is the generalized Verma module
\begin{equation*}
 \widehat{P}^k=\left\lbrace\begin{array}{lll}
  \widehat{V}_{n+n'+\frac{1}{2},e+e'}^k\oplus\widehat{V}_{n+n'-\frac{1}{2},e+e'}^k & \text{if} & e+e'\neq 0\\
         \widehat{P}^k_{ n+n'} & \text{if} & e+e'=0                                            \end{array}
\right. .
\end{equation*}
When $(e+e')/k\notin\ZZ\setminus\lbrace 0\rbrace$, therefore, $\widehat{P}^k$ is projective in $\cO_k^{fin}$ (by the theorems of Section \ref{subsec:proj}) and $\widehat{P}^k$ is the contragredient of a generalized Verma module (by Proposition \ref{prop:contra}). This means we can apply Theorem \ref{thm:gen_fus_rules} and the universal property of tensor products to get a homomorphism
\begin{equation*}
 \Pi: \widehat{V}^k_{n,e}\boxtimes\widehat{V}^k_{n',e'}\rightarrow\widehat{P}^k.
\end{equation*}
The map $\Pi$ is surjective because $\widehat{P}^k$ is generated by $\widehat{P}^k(0)$, so because $\widehat{P}^k$ is projective, it occurs as a direct summand of $\widehat{V}^k_{n,e}\boxtimes\widehat{V}^k_{n',e'}$. If $W$ is a submodule complement of $\widehat{P}^k$ in $\widehat{V}^k_{n,e}\boxtimes\widehat{V}^k_{n',e'}$, then the tensor product module has an $\NN$-grading such that
\begin{equation*}
 (\widehat{V}^k_{n,e}\boxtimes\widehat{V}^k_{n',e'})(0) = W(0)\oplus\widehat{P}^k(0).
\end{equation*}
However, Proposition \ref{prop:piY_surjective} says that $(\widehat{V}^k_{n,e}\boxtimes\widehat{V}^k_{n',e'})(0)$ is a homomorphic image of $\widehat{P}^k(0)$ (for any allowable choice of $\NN$-grading on the tensor product). Therefore $W(0)=0$ for any possible $\NN$-grading of $W$, and we conclude $W=0$. This proves the $(e+e')/k\notin\ZZ\setminus\lbrace 0\rbrace$ cases of the theorem.

Now when $(e+e')/k=\ell\in\ZZ\setminus\lbrace 0\rbrace$ and $e'/k\notin\ZZ$, we use Theorem \ref{thm:atyp_typ_fusion} and the $e+e'=0$ case of the present theorem to calculate
\begin{align*}
 \widehat{V}^k_{n,e}\boxtimes\widehat{V}^k_{n',e'} &\cong (\widehat{A}^k_{\varepsilon((e+e')/k),e+e'}\boxtimes\widehat{V}^k_{n,-e'})\boxtimes\widehat{V}^k_{n',e'}\nonumber\\
 & \cong \widehat{A}^k_{\varepsilon(\ell),\ell k}\boxtimes(\widehat{V}^k_{n,-e'}\boxtimes\widehat{V}^k_{n',e'}) \cong \widehat{A}^k_{\varepsilon(\ell),\ell k}\boxtimes\widehat{P}^k_{n+n'}.
\end{align*}
Thus it is sufficient to show that
\begin{equation}\label{eqn:atyp_proj_fusion}
 \widehat{A}^k_{\varepsilon(\ell),\ell k}\boxtimes\widehat{P}^k_n\cong\widehat{P}^k_{n+\varepsilon(\ell),\ell k}
\end{equation}
for any $\ell\in\ZZ\setminus\lbrace 0\rbrace$, $n\in\CC$. Since $\widehat{A}^k_{\varepsilon(\ell),\ell k}$ is a necessarily rigid simple current, since $\widehat{P}^k_n$ is projective in $\cO_k^{fin}$, and since tensor products of rigid with projective objects are projective (see for example \cite[Cor. 2, Appendix]{KL5}), $\widehat{A}^k_{\varepsilon(\ell),\ell k}\boxtimes\widehat{P}^k_n$ is projective in $\cO_k^{fin}$. We also have a surjection
\begin{equation*}
 \widehat{A}^k_{\varepsilon(\ell),\ell k}\boxtimes\widehat{P}^k_n \xrightarrow{\id\boxtimes q} \widehat{A}^k_{\varepsilon(\ell),\ell k}\boxtimes\widehat{A}^k_{n,0}\xrightarrow{\cong} \widehat{A}^k_{n+\varepsilon(\ell),\ell k},
\end{equation*}
where $q$ is a surjection from $\widehat{P}^k_{n}$ onto $\widehat{A}^k_{n,0}$. Since $\widehat{P}^k_{n+\varepsilon(\ell),\ell k}$ is a projective cover of $\widehat{A}^k_{n+\varepsilon(\ell),\ell k}$, it follows that $\widehat{P}^k_{n+\varepsilon(\ell),\ell k}$ is a direct summand of $\widehat{A}^k_{\varepsilon(\ell),\ell k}\boxtimes\widehat{P}^k_n$. Then because $\widehat{A}^k_{\varepsilon(\ell),\ell k}$ is a simple current, $\widehat{A}^k_{\varepsilon(\ell),\ell k}\boxtimes\widehat{P}^k_n$ has length $4$ (see for example \cite[Prop. 2.5]{CKLR}, as does $\widehat{P}^k_{n+\varepsilon(\ell),\ell k}$, and \eqref{eqn:atyp_proj_fusion} follows.
\end{proof}

We can also compute tensor products involving projective modules using the fusion rules for simple modules together with associativity of tensor products; we record them here:
\begin{cor}\label{Okgeneral}
Tensor products involving projective modules in $\cO_k^{fin}$ are as follows:
\begin{enumerate}
\item For $n,n'\in\CC$ and $\ell,\ell'\in\ZZ$,
\begin{equation*}
 \widehat{A}_{n, \ell k}^k \btimes \widehat{P}^k_{n', \ell' k} \cong \widehat{P}^k_{n+n'-\varepsilon(\ell, \ell'), (\ell+\ell')k}.
\end{equation*}

\item For $n,n'\in\CC$, $e/k\notin\ZZ$, and $\ell\in\ZZ$,
$$\widehat{V}^k_{n, e}\btimes \widehat{P}_{n', \ell' k}^k = \widehat{V}^k_{n+n'+1-\varepsilon(\ell'), e+\ell' k} \oplus 2\cdot\widehat{V}^k_{n+n'-\varepsilon(\ell'), e+\ell' k} \oplus \widehat{V}^k_{n+n'-1-\varepsilon(\ell'), e+\ell'k}.$$

\item For $n,n'\in\CC$ and $\ell,\ell'\in\ZZ$,
$$\widehat{P}^k_{n, \ell k}\btimes \widehat{P}_{n', \ell' k}^k = \widehat{P}^k_{n+n'+1-\varepsilon(\ell, \ell'), (\ell+\ell')k} \oplus 2\cdot\widehat{P}^k_{n+n'-\varepsilon(\ell, \ell'), (\ell+\ell')k} \oplus \widehat{P}^k_{n+n'-1-\varepsilon(\ell,\ell'), (\ell+\ell')k}.$$
\end{enumerate}
\end{cor}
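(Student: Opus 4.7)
My plan is to prove the three parts in order, using only associativity/commutativity of $\boxtimes$ together with Theorems \ref{thm:atyp_atyp_fusion}--\ref{thm:typ_typ_fusion} and the projective cover statement of Theorem \ref{projectivecovers}; no new representation-theoretic input is required. The unifying observation is that every projective $\widehat{P}^k_{n, \ell k}$ is an $\widehat{A}^k_{\varepsilon(\ell), \ell k}$-shift of $\widehat{P}^k_{n - \varepsilon(\ell), 0}$ (once part~(1) is in hand), while each $\widehat{P}^k_{m, 0}$ can in turn be rewritten as $\widehat{V}^k_{0, -e_0}\boxtimes\widehat{V}^k_{m, e_0}$ for any $e_0/k\notin\ZZ$ via the $e+e'=0$ case of Theorem \ref{thm:typ_typ_fusion}. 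Iterating these two rewrites reduces all three tensor products to fusions already computed.

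For part~(1), I would first note that $\widehat{A}^k_{n, \ell k}$ is a simple current with inverse $\widehat{A}^k_{-n, -\ell k}$ by Theorem \ref{thm:atyp_atyp_fusion}, hence rigid; consequently $\widehat{A}^k_{n, \ell k}\boxtimes(-)$ is exact and preserves composition length and projectivity. Since $\widehat{P}^k_{n', \ell' k}$ is projective of length $4$, the same holds for $\widehat{A}^k_{n, \ell k}\boxtimes\widehat{P}^k_{n', \ell' k}$. Tensoring the canonical surjection $\widehat{P}^k_{n',\ell' k}\twoheadrightarrow\widehat{A}^k_{n',\ell' k}$ with $\id_{\widehat{A}^k_{n,\ell k}}$ and invoking Theorem \ref{thm:atyp_atyp_fusion} produces a surjection onto $\widehat{A}^k_{n+n'-\varepsilon(\ell,\ell'),(\ell+\ell')k}$. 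Thus $\widehat{A}^k_{n,\ell k}\boxtimes\widehat{P}^k_{n',\ell' k}$ is a length-$4$ projective cover of this atypical irreducible, so by Theorem \ref{projectivecovers} and uniqueness of projective covers it must equal $\widehat{P}^k_{n+n'-\varepsilon(\ell,\ell'),(\ell+\ell')k}$.

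For part~(2), part~(1) applied to the labels $(\varepsilon(\ell'),\ell' k)$ and $(n'-\varepsilon(\ell'),0)$ yields $\widehat{P}^k_{n',\ell' k}\cong\widehat{A}^k_{\varepsilon(\ell'),\ell' k}\boxtimes\widehat{P}^k_{n'-\varepsilon(\ell'),0}$ (using $\varepsilon(\ell',0)=0$); combining with associativity and Theorem \ref{thm:atyp_typ_fusion} gives $\widehat{V}^k_{n,e}\boxtimes\widehat{P}^k_{n',\ell' k}\cong\widehat{V}^k_{n,e+\ell' k}\boxtimes\widehat{P}^k_{n'-\varepsilon(\ell'),0}$, reducing the problem to computing $\widehat{V}^k_{n,e}\boxtimes\widehat{P}^k_{m,0}$ for $e/k\notin\ZZ$. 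Picking $e_0\in\CC$ with $e_0/k,(e-e_0)/k\notin\ZZ$ (a cofinite condition), I would rewrite $\widehat{P}^k_{m,0}\cong\widehat{V}^k_{0,-e_0}\boxtimes\widehat{V}^k_{m,e_0}$ from the $e+e'=0$ case of Theorem \ref{thm:typ_typ_fusion}, and then apply the typical-typical case of that theorem twice in succession to obtain the asserted three-Verma decomposition. Part~(3) follows by the same scheme: apply part~(1) to both factors to pull out $\widehat{A}^k_{\varepsilon(\ell),\ell k}\boxtimes\widehat{A}^k_{\varepsilon(\ell'),\ell' k}\cong\widehat{A}^k_{\varepsilon(\ell+\ell'),(\ell+\ell')k}$, leaving $\widehat{P}^k_{a,0}\boxtimes\widehat{P}^k_{b,0}$; rewriting one factor as $\widehat{V}^k_{0,-e_0}\boxtimes\widehat{V}^k_{a,e_0}$ and invoking part~(2) followed by the $e+e'=0$ case of Theorem \ref{thm:typ_typ_fusion} gives the three-projective sum, and a final application of part~(1) reinstalls the $(\ell+\ell')k$ label. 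The only real bookkeeping throughout is tracking $\varepsilon$-shifts via the identity $\varepsilon(\ell)+\varepsilon(\ell')-\varepsilon(\ell+\ell')=\varepsilon(\ell,\ell')$; I do not anticipate any substantive obstacle, as all the needed inputs — rigidity of simple currents, the fusion rules for simple modules, and the uniqueness of projective covers — are already in place.
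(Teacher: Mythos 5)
Your proposal is correct and realizes exactly the strategy the paper gestures at in the one-sentence lead-in to Corollary~\ref{Okgeneral} (``using the fusion rules for simple modules together with associativity of tensor products''); the paper records the fusion rules but does not write out a proof, so yours supplies the missing details and does so soundly. The key technical moves all check out: pulling the spectral-flow index out of a projective via part~(1), rewriting $\widehat{P}^k_{m,0}\cong\widehat{V}^k_{0,-e_0}\boxtimes\widehat{V}^k_{m,e_0}$ from the $e+e'=0$ case of Theorem~\ref{thm:typ_typ_fusion}, and then applying the generic typical-typical rule twice; the $\varepsilon$-bookkeeping via $\varepsilon(\ell,\ell')=\varepsilon(\ell)+\varepsilon(\ell')-\varepsilon(\ell+\ell')$ is consistent throughout. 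One small point worth tightening in part~(1): you assert the tensor product ``is a length-4 projective cover'' directly, but what actually follows from the surjection onto $\widehat{A}^k_{n+n'-\varepsilon(\ell,\ell'),(\ell+\ell')k}$ and projectivity is that the projective cover $\widehat{P}^k_{n+n'-\varepsilon(\ell,\ell'),(\ell+\ell')k}$ occurs as a direct summand of $\widehat{A}^k_{n,\ell k}\boxtimes\widehat{P}^k_{n',\ell' k}$; equality of lengths (both $4$, since simple currents preserve length) then forces the summand to be the whole module. This is precisely the argument the paper uses to establish \eqref{eqn:atyp_proj_fusion} inside the proof of Theorem~\ref{thm:typ_typ_fusion}, so you are on solid ground, but stating it as ``direct summand plus length comparison'' rather than ``is a projective cover'' avoids the need to verify that the surjection is essential.
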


\section{Rigidity}

In this section, we prove that the tensor supercategories $KL_k$ and $\cO_k^{fin}$ are rigid. The strategy is to first prove that all simple $V_k(\gl(1|1))$-modules are rigid, and then extend rigidity to finite-length objects using \cite[Thm. 4.4.1]{CMY2}.

\subsection{Knizhnik-Zamolodchikov equations}

In order to prove that the typical irreducible $V_k(\gl(1|1))$-modules $\widehat{V}^k_{n,e}$ are rigid, we will need explicit formulas for four-point correlation functions involving highest-weight vectors in $\widehat{V}^k_{n,e}$ and its contragredient. We will obtain these correlation functions as solutions to Knizhnik-Zamolodchikov (KZ) equations.

Recall from Proposition \ref{prop:contra} that $(\widehat{V}^k_{n,e})'\cong\widehat{V}^k_{-n,-e}$, but by an odd isomorphism. Thus it is better to take $(\widehat{V}^k_{n,e})'$ to be the parity-reversed module $\Pi(\widehat{V}^k_{-n,-e})$; that is, we take a highest-weight vector $v_{-n,-e}\in(\widehat{V}^k_{n,e})'$ to be odd. This way, intertwining operators of interest involving $\widehat{V}^k_{n,e}$ and its contragredient, in particular the ones induced by the homomorphisms
\begin{equation*}
 \widehat{V}^k_{n,e}\boxtimes\Pi(\widehat{V}^k_{-n,-e})\xrightarrow{\cong}\widehat{P}^k_0
\end{equation*}
and
\begin{equation*}
 \widehat{V}^k_{n,e}\boxtimes\Pi(\widehat{V}^k_{-n,-e})\xrightarrow{\cong}\widehat{P}^k_0\twoheadrightarrow\widehat{A}^k_{0,0},
\end{equation*}
will be even. For simplicity of notation, in what follows we will use $V$ to denote $\widehat{V}^k_{n,e}$ and $V'$ to denote $\Pi(\widehat{V}^k_{-n,-e})$.

Now let $W$ be some $V_k(\gl(1|1))$-module, $\cY_1$ an even intertwining operator of type $\binom{V}{V\,W}$, and $\cY_2$ an even intertwining operator of type $\binom{W}{V'\,V}$.
 Then we define the multivalued analytic functions
\begin{align*}
 &\Phi(z_1,z_2)(v_0,v_1,v_2,v_3): = \jiao{v_0, \cY_1(v_1,z_1)\cY_2(v_2, z_2)v_3}, \qquad\vert z_1\vert>\vert z_2\vert>0
\end{align*}
on the indicated region, where $v_0,v_2\in V'$ and $v_1,v_3\in V$ are vectors of (minimal) conformal weight $\Delta_{n,e}$. We also assume each $v_i$ is an $N_0$-eigenvector with eigenvalue $n_i$; so $n_1, n_3\in\lbrace n\pm\frac{1}{2}\rbrace$ and  $n_0,n_2\in\lbrace -n\pm\frac{1}{2}\rbrace$. It is then straightforward to use the expression \eqref{ell-1} for $L_{-1}$, the $L_{-1}$-derivative for intertwining operators, the commutator formula \eqref{eqn:intw_op_comm}, and the iterate formula \eqref{eqn:intwo_op_it} to derive the following partial differential equations (for examples of detailed calculations of this type see for example \cite{HL, Mc1, CMY2}):
\begin{prop}[Knizhnik-Zamolodchikov equations]\label{KZ}
The functions $\Phi$ satisfy the following partial differential equations:
\begin{align}
& \partial_{z_1} \Phi(z_1, z_2)(v_0, v_1, v_2, v_3)\nonumber\\
& = \left[-\frac{e}{k}\left(n_1-n_2+\frac{e}{k}\right)(z_1-z_2)^{-1}+ \frac{e}{k}\left(n_1+n_3+\frac{e}{k}\right)z_1^{-1}\right]\Phi(z_1, z_2)(v_0, v_1, v_2, v_3)\nonumber\\
 & \quad + \frac{(-1)^{|v_1|}}{k}(z_1-z_2)^{-1}\big[\Phi(z_1, z_2)(v_0, \psi^-v_1, \psi^+v_2, v_3)-\Phi(z_1, z_2)(v_0, \psi^+v_1, \psi^-v_2, v_3)\big] \nonumber\\
\label{productz1} & \quad+ \frac{(-1)^{|v_1|+|v_2|}}{k}z_1^{-1}\big[\Phi(z_1, z_2)(v_0, \psi^-v_1, v_2, \psi^+v_3)-\Phi(z_1, z_2)(v_0, \psi^+v_1, v_2, \psi^-v_3)\big]
\end{align}
\begin{align}
& \partial_{z_2} \Phi(z_1, z_2)(v_0, v_1, v_2, v_3)\nonumber\\
&= \left[\frac{e}{k}\left(n_1-n_2+\frac{e}{k}\right)(z_1-z_2)^{-1}+ \frac{e}{k}\left(n_2-n_3-\frac{e}{k}\right)z_2^{-1}\right]\Phi(z_1, z_2)(v_0, v_1, v_2, v_3) \nonumber\\
& \quad + \frac{(-1)^{|v_1|}}{k}(z_1-z_2)^{-1}\big[\Phi(z_1, z_2)(v_0, \psi^+v_1, \psi^-v_2, v_3)-\Phi(z_1, z_2)(v_0, \psi^-v_1, \psi^+v_2, v_3)\big]\nonumber \\
\label{productz2}& \quad + \frac{(-1)^{|v_2|}}{k}z_2^{-1}\big[\Phi(z_1, z_2)(v_0, v_1, \psi^-v_2, \psi^+v_3) -\Phi(z_1, z_2)(v_0, v_1, \psi^+v_2, \psi^-v_3)\big],
\end{align}
\end{prop}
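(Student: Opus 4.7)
The plan is to derive both partial differential equations by applying the $L_{-1}$-derivative property of intertwining operators, $\partial_{z_i}\cY_i(v,z_i) = \cY_i(L_{-1}v,z_i)$, together with the explicit modified Sugawara expression \eqref{ell-1} for $L_{-1}$. The crucial simplification is that every vector $v_0,v_1,v_2,v_3$ sits in a lowest conformal weight space, so all positive modes of $\widehat{\gl(1|1)}$ annihilate $v_1,v_2,v_3$, and by the contragredient formula \eqref{eqn:aff_contra} any positive-mode operator that is moved onto $v_0$ also gives zero. This will kill all but finitely many terms.

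For the first equation, since positive modes kill $v_1$, only the $r=0$ summands of \eqref{ell-1} survive, giving
\[
L_{-1}v_1 = \frac{e}{k}N_{-1}v_1 + \frac{n_1}{k}E_{-1}v_1 - \frac{1}{k}\psi^+_{-1}\psi^-_0 v_1 + \frac{1}{k}\psi^-_{-1}\psi^+_0 v_1 + \frac{e}{k^2}E_{-1}v_1.
\]
For each resulting expression $\cY_1(a_{-1}\tilde v_1,z_1)$, I would apply the iterate formula \eqref{eqn:intwo_op_it} to rewrite it as a sum of terms $z_1^i\, a_{-1-i}\cY_1(\tilde v_1,z_1)$ for $i\ge 0$ and terms $z_1^{-1-i}\cY_1(\tilde v_1,z_1)a_i$ for $i\ge 0$. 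The left-mode terms vanish after pairing with $v_0$. For the right-mode terms, I would move $a_i$ past $\cY_2(v_2,z_2)$ using the commutator formula \eqref{eqn:intw_op_comm}; only zero-mode actions on $v_2$ and $v_3$ survive (since higher positive modes annihilate them), and the resulting geometric series $\sum_{i\ge 0} z_1^{-1-i}z_2^i$ sums to $(z_1-z_2)^{-1}$, while the explicit $i=0$ contribution from $a_0 v_3$ contributes $z_1^{-1}$. Assembling the pieces, the $a=N$ and $a=E$ contributions (using $E_0v_2=-ev_2$, $E_0v_3=ev_3$) combine to produce the coefficients $\frac{e}{k}(n_1+n_3+e/k)$ of $z_1^{-1}$ and $-\frac{e}{k}(n_1-n_2+e/k)$ of $(z_1-z_2)^{-1}$, while the $a=\psi^\pm$ contributions assemble into the bracketed differences in \eqref{productz1}.

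The second equation is derived by the parallel strategy using $\partial_{z_2}\cY_2(v_2,z_2) = \cY_2(L_{-1}v_2,z_2)$ and the analogous expansion of $L_{-1}v_2$ (with $E_0v_2=-ev_2$, $N_0v_2=n_2v_2$). The only difference is that the $a_{-1-i}\cY_2(\tilde v_2,z_2)$ terms from the iterate formula must now be moved \emph{left} past $\cY_1(v_1,z_1)$ via \eqref{eqn:intw_op_comm}; the commutator produces terms $\cY_1(a_0v_1,z_1)$ (only $j=0$ survives as $v_1$ is lowest-weight), while the mode that makes it all the way past $\cY_1$ to $v_0$ annihilates it. Summing the resulting geometric series again yields a $(z_1-z_2)^{-1}$ pole, with a $z_2^{-1}$ contribution from the right-mode $a_0$ action on $v_3$. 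I expect the main technical obstacle to be bookkeeping the $\ZZ_2$-graded signs: each invocation of \eqref{eqn:intw_op_comm}, \eqref{eqn:intwo_op_it}, and \eqref{eqn:aff_contra} introduces sign factors depending on parities of $a$, $v_1$, $v_2$, $v_3$, and the $\psi^\pm$ contributions must be tracked carefully to reproduce the exact signs $(-1)^{|v_1|}$ and $(-1)^{|v_1|+|v_2|}$ appearing in the stated equations.
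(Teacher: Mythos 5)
Your proposal matches the paper's approach exactly: the paper gives no detailed derivation, but states right before the proposition that the equations follow by combining the $L_{-1}$-derivative property of intertwining operators with the modified Sugawara expression \eqref{ell-1} for $L_{-1}$ and then repeatedly applying the commutator formula \eqref{eqn:intw_op_comm} and iterate formula \eqref{eqn:intwo_op_it}, citing \cite{HL, Mc1, CMY2} for worked examples. Your sketch correctly identifies that only $r=0$ survives in the Sugawara sum on lowest-weight vectors, that left-mode contributions $a_{-1-i}$ vanish after pairing against $v_0$ via \eqref{eqn:aff_contra}, that the commutator produces only zero-mode actions on the other lowest-weight insertions, and that the resulting geometric series $\sum_{i\ge 0} z_1^{-1-i}z_2^i = (z_1-z_2)^{-1}$ assembles the pole structure; your coefficient bookkeeping (using $E_0v_2=-ev_2$, $E_0v_3=ev_3$, $N_0 v_j = n_j v_j$) reproduces the stated coefficients, and you correctly flag the $\ZZ_2$-graded signs as the remaining technical check.
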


\begin{remark}
For simplicity, we will sometimes drop the dependence on $v_0$, $v_1$, $v_2$, and $v_3$ from the notation for $\Phi$.
\end{remark}

To solve the KZ equations, we reduce $\Phi$ to a one-variable function using the $L_0$-conjugation formula for intertwining operators. In fact, since the lowest conformal weight of both $V$ and $V'$ is $\Delta_{n,e}$, $L_0$-conjugation implies that
\begin{equation}\label{eqn:Phi_phi_relation}
 \Phi(z_1,z_2)=z_1^{-2\Delta_{n,e}}\phi(z_2/z_1)
\end{equation}
where
\begin{equation*}
 \phi(z):=\Phi(1,z)=\langle v_0,\cY_1(v_1,1)\cY_2(v_2,z)v_3\rangle.
\end{equation*}
It is convenient to view $\phi(z)$ as a single-valued analytic function on the simply-connected domain
\begin{equation*}
 U=\lbrace z\in\CC\,\vert\,\vert z\vert<1\rbrace\setminus(-1,0];
\end{equation*}
we fix a single-valued branch by setting $\log 1=0$ and $\log z=\log\vert z\vert+i\arg z$, where $-\pi<\arg z<\pi$.

We will need some relations between the functions $\phi$ for varying $v_1$, $v_2$, and $v_3$:
\begin{prop}
For any lowest-conformal-weight vector $v_0\in V'$, the following relations hold:
\begin{align}
\label{vanish1}
&\phi(z)(v_0,v_{n,e},v_{-n,-e},v_{n,e}) = 0,\\
\label{reduction1s}
& \phi(z)(v_0,\psi^-v_{n,e},v_{-n,-e},v_{n,e})  + \phi(z)(v_0,v_{n,e},\psi^-v_{-n,-e},v_{n,e})\nonumber\\
& \qquad = \phi(z)(v_0,v_{n,e},v_{-n,-e},\psi^-v_{n,e}).
\end{align}
\end{prop}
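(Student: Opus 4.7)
My plan is to derive both identities by comparing $N_0$-eigenvalues across the pairing and, where needed, by applying the commutator formula \eqref{eqn:intw_op_comm} with $a = \psi^-$. The key weight-matching observation is that, since $N$ is even, \eqref{eqn:aff_contra} forces $\langle v_0, w\rangle = 0$ unless the $N_0$-eigenvalues of $v_0 \in V'$ and $w \in V$ are negatives of one another (using the contragredient $N_0$-action on $V'$, which under $V' \cong \Pi(\widehat{V}^k_{-n,-e})$ coincides with the natural $N_0$-action on $\Pi(\widehat{V}^k_{-n,-e})$).

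For \eqref{vanish1}, the commutator $[N_0,\cY_i(v,x)] = \cY_i(N_0 v, x)$ shows that $\cY_1(v_{n,e},1)\cY_2(v_{-n,-e},z)v_{n,e}$ has total $N_0$-eigenvalue $(n+\tfrac{1}{2}) + (-n+\tfrac{1}{2}) + (n+\tfrac{1}{2}) = n + \tfrac{3}{2}$, whereas the lowest-conformal-weight space of $V'$ carries $N_0$-eigenvalues in $\{-n+\tfrac{1}{2},\,-n-\tfrac{1}{2}\}$, neither of which equals $-(n+\tfrac{3}{2})$. So \eqref{vanish1} holds by weight grounds alone.

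For \eqref{reduction1s}, the same weight count shows that every term in the identity vanishes unless $v_0$ has $N_0$-eigenvalue $-(n+\tfrac{1}{2})$, so I may assume $v_0$ is a nonzero scalar multiple of $\psi^- v_{-n,-e}$. Because $\psi^-_0$ squares to zero in $\widehat{\gl(1|1)}$ and the contragredient action $(\psi^-_0)'$ corresponds under $V' \cong \Pi(\widehat{V}^k_{-n,-e})$ to $\psi^-_0$, we get $(\psi^-_0)' v_0 = 0$, so \eqref{eqn:aff_contra} gives $\langle v_0,\psi^-_0 X\rangle = 0$ for every $X \in V$. I would then take $X = \cY_1(v_{n,e},1)\cY_2(v_{-n,-e},z)v_{n,e}$ and push $\psi^-_0$ past $\cY_1$ and $\cY_2$ using \eqref{eqn:intw_op_comm}. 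Since $v_{n,e}$ is even while $v_{-n,-e}$ is odd (by the parity reversal in $V' = \Pi(\widehat{V}^k_{-n,-e})$), the two commutators carry opposite signs, and pairing against $v_0$ yields
\[
0 \;=\; -\phi(z)(v_0,v_{n,e},v_{-n,-e},\psi^- v_{n,e}) + \phi(z)(v_0,v_{n,e},\psi^- v_{-n,-e},v_{n,e}) + \phi(z)(v_0,\psi^- v_{n,e},v_{-n,-e},v_{n,e}),
\]
which rearranges to \eqref{reduction1s}.

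The only thing to watch is the sign bookkeeping in the two commutators with $\cY_1$ and $\cY_2$: the crucial input is the oddness of $v_{-n,-e}$ in the parity-reversed $V'$, which flips the sign factor $(-1)^{\vert\psi^-\vert\vert v_{-n,-e}\vert}$ from $+1$ to $-1$ and produces the needed $+\phi(z)(v_0,v_{n,e},\psi^- v_{-n,-e},v_{n,e})$ term rather than its negative. Once parities are tracked, everything else is mechanical.
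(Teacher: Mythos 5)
Your proof is correct, and for \eqref{vanish1} you take a genuinely different and in fact simpler route than the paper. The paper proves \eqref{vanish1} by specializing the Knizhnik--Zamolodchikov equations \eqref{productz1}--\eqref{productz2} to $v_1=v_3=v_{n,e}$, $v_2=v_{-n,-e}$, combining them with the $L_0$-conjugation relation \eqref{eqn:Phi_phi_relation}, and extracting the algebraic consequence $\frac{e}{k}\phi(z)=0$; your $N_0$-weight count (the total eigenvalue $n+\tfrac{3}{2}$ of $\cY_1(v_{n,e},1)\cY_2(v_{-n,-e},z)v_{n,e}$ is never the negative of any $N_0$-eigenvalue on the lowest-conformal-weight space of $V'$) reaches the same conclusion directly, without invoking the differential equations at all. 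This is a clean simplification; the paper's use of the KZ system here is presumably for economy since those equations are set up in any case for Theorem~\ref{thm:diff_eq} and the rigidity argument.

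For \eqref{reduction1s} your computation is essentially the same as the paper's — both push $\psi^-_0$ past $\cY_1$ and $\cY_2$ with the $r=0$ commutator formula \eqref{eqn:intw_op_comm} and use the parities $\vert v_{n,e}\vert=0$, $\vert v_{-n,-e}\vert=1$ to flip the sign — but your entry point is slightly different. The paper writes the starting zero as $\phi(z)(\psi^-v_0,v_{n,e},v_{-n,-e},v_{n,e})=0$ by appealing to \eqref{vanish1} (valid since $(\psi^-_0)'v_0$ is again a lowest-conformal-weight vector of $V'$), whereas you first reduce by $N_0$-weight matching to $v_0=\psi^-v_{-n,-e}$ and then observe directly that $(\psi^-_0)'v_0 = (\psi^-_0)^2 v_{-n,-e}=0$. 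This decouples \eqref{reduction1s} from \eqref{vanish1}, which is a modest structural advantage. Your sign bookkeeping — the $+1$ from $(-1)^{\vert\psi^-\vert\vert v_{n,e}\vert}$ versus the $-1$ from $(-1)^{\vert\psi^-\vert\vert v_{-n,-e}\vert}$ — checks out and agrees with the paper.
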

\begin{proof}
For $v_1=v_3=v_{n,e}$ and $v_2=v_{-n,-e}$, the KZ equations \eqref{productz1} and \eqref{productz2} become
\begin{align*}
 \partial_{z_1}\Phi(z_1,z_2) & = \left[-\frac{e}{k}\left(2n+\frac{e}{k}\right)(z_1-z_2)^{-1}+\frac{e}{k}\left(2n+1+\frac{e}{k}\right)z_1^{-1}\right]\Phi(z_1,z_2)\nonumber\\
 \partial_{z_2}\Phi(z_1,z_2) & =\frac{e}{k}\left(2n+\frac{e}{k}\right)\left[(z_1-z_2)^{-1}-z_2^{-1}\right]\Phi(z_1,z_2).
\end{align*}
From \eqref{eqn:Phi_phi_relation}, we also get the relations
\begin{align*}
 \partial_{z_1}\Phi(z_1,z_2) & = -2\Delta_{n,e} z_1^{-2\Delta_{n,e}-1}\phi(z_2/z_1)-z_1^{-2\Delta_{n,e}-2}z_2\phi'(z_2/z_1),\nonumber\\
 \partial_{z_2}\Phi(z_1,z_2) &=z_1^{-2\Delta_{n,e}-1}\phi'(z_2/z_1).
\end{align*}
Thus setting $z_1=1$, $z_2=z$ and using the definition \eqref{eqn:Delta_n,e} of $\Delta_{n,e}$, we get
\begin{align*}
 -2\Delta_{n,e}\phi(z)-z\phi'(z) & =\left(-2\Delta_{n,e}(1-z)^{-1}+2\Delta_{n,e}+\frac{e}{k}\right)\phi(z),\nonumber\\
 \phi'(z) & =2\Delta_{n,e}\left((1-z)^{-1}-z^{-1}\right)\phi(z).
\end{align*}
These two equations simplify to $\frac{e}{k}\phi(z)=0$; since $e\neq 0$, this means $\phi(z)=0$, proving \eqref{vanish1}.

For \eqref{reduction1s}, we use \eqref{vanish1}, the contragredient formula \eqref{eqn:aff_contra}, and the $r=0$ case of the commutator formula \eqref{eqn:intw_op_comm} to get
\begin{align*}
0 & =\phi(z)(\psi^-v_0,v_{n,e},v_{-n,-e},v_{n,e})\nonumber\\
&= -(-1)^{|v_0|}\phi(z)(v_0,\psi^-v_{n,e},v_{-n,-e},v_{n,e})  -(-1)^{|v_0|+|v_{n,e}|}\phi(z)(v_0,v_{n,e},\psi^-v_{-n,-e},v_{n,e})\nonumber\\
& \quad\, -(-1)^{|v_0|+|v_{n,e}|+|v_{-n,-e}|}\phi(z)(v_0,v_{n,e},v_{-n,-e},\psi^-v_{n,e}).
\end{align*}
Since $\vert v_{n,e}\vert=0$ and $\vert v_{-n,-e}\vert=1$, \eqref{reduction1s} follows.
\end{proof}

Now we derive a second-order differential equation for $\phi(z)(v_0,v_{n,e},v_{-n,-e},\psi^-v_{n,e})$. We begin with two cases of the KZ equation \eqref{productz2} specialized to $z_1=1$, $z_2=z$:
\begin{align}\label{eqn:phi1_diff_eq}
 \phi'(z) & (v_0,\psi^-v_{n,e},v_{-n,-e},v_{n,e})\nonumber\\
 &= \left[\frac{e}{k}\left(2n-1+\frac{e}{k}\right)(1-z)^{-1}-\frac{e}{k}\left(2n-\frac{e}{k}\right)z^{-1}\right]\phi(z)(v_0,\psi^-v_{n,e},v_{-n,-e},v_{n,e})\nonumber\\
 &\qquad -\frac{e}{k}(1-z)^{-1}\phi(z)(v_0,v_{n,e},\psi^- v_{-n,-e},v_{n,e})\nonumber\\
 & = 2\Delta_{n,e}\big[(1-z)^{-1}-z^{-1}\big]\phi(z)(v_0,\psi^-v_{n,e},v_{-n,-e},v_{n,e})\nonumber\\
 &\qquad-\frac{e}{k}(1-z)^{-1}\phi(z)(v_0,v_{n,e},v_{-n,-e},\psi^-v_{n,e}),
\end{align}
where the second equality uses \eqref{eqn:Delta_n,e} and \eqref{reduction1s}, and
\begin{align}\label{eqn:phi3_diff_eq}
\phi'(z) & (v_0,v_{n,e},v_{-n,-e},\psi^-v_{n,e})\nonumber\\
& = \left[\frac{e}{k}\left(2n+\frac{e}{k}\right)(1-z)^{-1}+\frac{e}{k}\left(2n-1-\frac{e}{k}\right)z^{-1}\right]\phi(z)(v_0,v_{n,e},v_{-n,-e},\psi^-v_{n,e})\nonumber\\
&\qquad - \frac{e}{k} z^{-1}\phi(z)(v_0,v_{n,e},\psi^-v_{-n,-e}, v_{n,e})\nonumber\\
& = 2\Delta_{n,e}\big[(1-z)^{-1}-z^{-1}\big]\phi(z)(v_0,v_{n,e},v_{-n,-e},\psi^-v_{n,e})\nonumber\\
&\qquad +\frac{e}{k}z^{-1}\phi(z)(v_0,\psi^-v_{n,e},v_{-n,-e},v_{n,e}).
\end{align}
We can solve \eqref{eqn:phi3_diff_eq} for $\phi(z)(v_0,\psi^-v_{n,e},v_{-n,-e},v_{n,e})$ in terms of $\phi(z)(v_0,v_{n,e},v_{-n,-e},\psi^-v_{n,e})$ and its derivative and then plug into \eqref{eqn:phi1_diff_eq}. The result is the following differential equation for $\phi(z)(v_0,v_{n,e},v_{-n,-e},\psi^-v_{n,e})$:
\begin{theorem}\label{thm:diff_eq}
 For any lowest-conformal-weight vector $v_0\in V'$, the analytic function $\phi(z)(v_0,v_{n,e},v_{-n,-e},\psi^-v_{n,e})$ is a solution to the differential equation
 \begin{align}\label{eqn:main_diff_eq}
  z(1-z)\phi''(z) &+\big[(4\Delta_{n,e}+1)-(8\Delta_{n,e}+1)z\big]\phi'(z)+4\Delta_{n,e}^2 z^{-1}\phi(z)\nonumber\\
 & +2\Delta_{n,e}(2\Delta_{n,e}-1)(1-z)^{-1}\phi(z)+\left[\left(\frac{e}{k}\right)^2-16\Delta_{n,e}^2\right]\phi(z)=0
 \end{align}
in the region $U$.
\end{theorem}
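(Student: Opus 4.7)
\textbf{Proof proposal for Theorem \ref{thm:diff_eq}.}

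The plan is to eliminate the auxiliary function $\phi_1(z) := \phi(z)(v_0,\psi^-v_{n,e},v_{-n,-e},v_{n,e})$ from the coupled first-order system consisting of \eqref{eqn:phi1_diff_eq} and \eqref{eqn:phi3_diff_eq}, obtaining a scalar second-order ODE for $\phi_3(z) := \phi(z)(v_0,v_{n,e},v_{-n,-e},\psi^-v_{n,e})$. Writing $\Delta := \Delta_{n,e}$, the two equations read
\begin{align*}
 \phi_1'(z) &= 2\Delta\bigl[(1-z)^{-1}-z^{-1}\bigr]\phi_1(z) - \tfrac{e}{k}(1-z)^{-1}\phi_3(z), \\
 \phi_3'(z) &= 2\Delta\bigl[(1-z)^{-1}-z^{-1}\bigr]\phi_3(z) + \tfrac{e}{k}z^{-1}\phi_1(z),
\end{align*}
so the second equation can be solved algebraically (using $e\neq 0$) for $\phi_1(z)$ as an explicit linear combination of $\phi_3(z)$ and $z\phi_3'(z)$.

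First I would substitute this expression for $\phi_1$ into the first equation, after differentiating it to produce a formula for $\phi_1'$ in terms of $\phi_3$, $\phi_3'$ and $\phi_3''$. Multiplying through by $z(1-z)$ and collecting the coefficients of $\phi_3'$ using the identity $1-\tfrac{z}{1-z}=\tfrac{1-2z}{1-z}$, the $\phi_3'$ terms collapse exactly to $[(4\Delta+1)-(8\Delta+1)z]\phi_3'$, matching the middle term of \eqref{eqn:main_diff_eq}.

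Next, for the coefficient of $\phi_3$, the key algebraic input is the partial fraction decomposition
\begin{equation*}
 \frac{(1-2z)^2}{z(1-z)} \;=\; \frac{1}{z} + \frac{1}{1-z} - 4,
\end{equation*}
together with the elementary simplification $-2\Delta - 2\Delta\,\tfrac{z}{1-z} = -\tfrac{2\Delta}{1-z}$. Using these, the $\phi_3$-coefficient reduces to $4\Delta^2 z^{-1}+2\Delta(2\Delta-1)(1-z)^{-1}+[(e/k)^2-16\Delta^2]$, giving precisely \eqref{eqn:main_diff_eq}.

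The only real obstacle is bookkeeping: one must carefully track the $\phi_3'$ and $\phi_3$ contributions coming from three separate sources (the derivative of $\phi_1$, the term $2\Delta[(1-z)^{-1}-z^{-1}]\phi_1$, and the term $-\tfrac{e}{k}(1-z)^{-1}\phi_3$), and then simplify using the two identities above. Since each step is an elementary but lengthy manipulation, I would organize the computation by first isolating the $\phi_3''$-term, then the $\phi_3'$-term, then the $\phi_3$-term, verifying at each stage that the $(e/k)$ factors combine correctly (the only place $(e/k)^2$ appears is the product of the off-diagonal entries of the original $2\times 2$ system, which accounts for the shift by $(e/k)^2$ in the final equation). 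Analyticity of $\phi_3$ on $U$ is inherited from $\phi_1, \phi_3$ being matrix coefficients of compositions of intertwining operators convergent in $|z_1|>|z_2|>0$, translated to $U$ via \eqref{eqn:Phi_phi_relation}.
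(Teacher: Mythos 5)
Your proposal is correct and follows exactly the route the paper indicates immediately before the theorem: solve \eqref{eqn:phi3_diff_eq} algebraically for $\phi(z)(v_0,\psi^-v_{n,e},v_{-n,-e},v_{n,e})$, differentiate, and substitute into \eqref{eqn:phi1_diff_eq}. The partial-fraction identity $\tfrac{(1-2z)^2}{z(1-z)} = \tfrac{1}{z}+\tfrac{1}{1-z}-4$ you invoke is the key simplification and your final coefficients match \eqref{eqn:main_diff_eq}.
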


Note that $\phi(z)(v_0,v_{n,e},v_{-n,-e},\psi^-v_{n,e})$, as a series in $z$, is the series expansion of a solution to \eqref{eqn:main_diff_eq} about the regular singular point $0$. Since \eqref{eqn:main_diff_eq} is a second-order differential equation with regular singular points at $0$, $1$, and $\infty$, it can be transformed into a hypergeometric differential equation. Indeed, if
$$f(z) = z^{2\Delta_{n,e}}(1-z)^{2\Delta_{n,e}}\phi(z)$$
where $\phi(z)$ is a solution to \eqref{eqn:main_diff_eq}, then $f(z)$ satisfies the hypergeometric equation
\begin{equation}\label{secondorder2}
z(1-z)f''(z) + (1-z)f'(z) + \left(\frac{e}{k}\right)^2f(z) = 0.
\end{equation}
The solutions of \eqref{secondorder2} are well known (see for example \cite[Chap.~15]{AS} or \cite[Sec. 15.10]{DLMF}), and it follows that two linearly independent solutions of the original equation \eqref{eqn:main_diff_eq} on $U$ are
\begin{align*}
\phi^{(1)}(z) & = z^{-2\Delta_{n,e}}(1-z)^{-2\Delta_{n,e}}  {}_2F_1\left(\frac{e}{k}, -\frac{e}{k}; 1; z\right)\\
\phi^{(2)}(z) &= z^{-2\Delta_{n,e}}(1-z)^{-2\Delta_{n,e}}\left({}_2F_1\left(\frac{e}{k}, -\frac{e}{k}; 1; z\right)\log z + G(z)\right),
\end{align*}
where $G(z)$ is a power series that converges in the region $U$ (by adding a multiple of $\phi^{(1)}(z)$ if necessary, we may assume $G(z)$ has no constant term, but this is not important for us).

We also need some analytic properties of iterates of intertwining operators involving $V$ and $V'$, but we will not need explicit formulas. Let $M$ be some $V_k(\gl(1|1))$-module, $\cY^1$ an even intertwining operator of type $\binom{M}{V\,V'}$, and $\cY^2$ an even intertwining operator of type $\binom{V}{M\,V}$. The series
\begin{align*}
 &\Psi(z_0,z_2)(v_0,v_1,v_2,v_3): = \jiao{v_0, \cY^1(\cY^2(v_1, z_0)v_2, z_2)v_3},\qquad\vert z_2\vert>\vert z_0\vert>0
\end{align*}
in $z_0$ and $z_2$ converges to a multivalued analytic function in the indicated region. We then define the single-variable function
\begin{align*}
 \psi(z):=\Psi(1-z,z)=\langle v_0,\cY^1(\cY^2(v_1,1-z)v_2,z)v_3\rangle.
\end{align*}
Using the same branch of logarithm for $\log z$ and $\log(1-z)$ as we used for $\phi(z)$, $\psi(z)$ defines a single-valued analytic function on the simply-connected region
\begin{equation*}
 \widetilde{U}=\lbrace z\in\CC\,\vert\,\vert z\vert>\vert 1-z\vert>0\rbrace\setminus[1,\infty)=\left\lbrace z\in\CC\,\big\vert\,\mathrm{Re}\,z>\frac{1}{2}\right\rbrace\setminus[1,\infty).
\end{equation*}
Also, by the $L_0$-conjugation property,
\begin{align}\label{eqn:iterate_series_exp}
 \psi(z) & =z^{-2\Delta_{n,e}}\left\langle v_0,\cY^1\left(\cY^2\left(v_1,\frac{1-z}{z}\right)v_2,1\right)v_3\right\rangle\nonumber\\
 &=\left(1+\frac{1-z}{z}\right)^{2\Delta_{n,e}}\left\langle v_0,\cY^1\left(\cY^2\left(v_1,\frac{1-z}{z}\right)v_2,1\right)v_3\right\rangle
\end{align}
for lowest-conformal-weight vectors $v_0,v_2\in V'$ and $v_1,v_3\in V$. Thus the iterate of intertwining operators gives the expansion of the analytic function $\psi(z)$ as a series in $\frac{1-z}{z}$ on the region $\widetilde{U}$.

\begin{remark}
 Using the associativity of intertwining operators from \cite{HLZ6}, $\psi(z)$ is the analytic continuation to the region $\widetilde{U}$ of a corresponding product of intertwining operators $\phi(z)$. Thus the functions $\psi(z)$ satisfy the same differential equations as the functions $\phi(z)$.
\end{remark}

\subsection{Rigidity for $KL_k$ and $\cO_k^{fin}$}\label{subsec:rigidity}
In this section, we will prove that $KL_k$ and $\cO_k^{fin}$ are rigid, beginning with the simple objects. To simplify the proof, we first discuss some results on rigidity in general tensor (super)categories.

Recall that a (left) dual of an object $W$ in a tensor (super)category with unit object $\one$ is an object $W'$ equipped with an even evaluation morphism $e_W: W'\boxtimes W\rightarrow\one$ and even coevaluation $i_W: \one\rightarrow W\boxtimes W'$ such that the compositions
\begin{equation*}
 W\xrightarrow{l_W^{-1}}\one\boxtimes W\xrightarrow{i_W\boxtimes\id_W} (W\boxtimes W')\boxtimes W\xrightarrow{\cA_{W,W',W}^{-1}} W\boxtimes(W'\boxtimes W)\xrightarrow{\id_W\boxtimes e_W} W\boxtimes\one\xrightarrow{r_W} W
\end{equation*}
and
\begin{equation*}
 W'\xrightarrow{r_{W'}^{-1}}W'\boxtimes \one\xrightarrow{\id_{W'}\boxtimes i_W} W'\boxtimes (W\boxtimes W')\xrightarrow{\cA_{W',W,W'}} (W'\boxtimes W)\boxtimes W'\xrightarrow{e_W\boxtimes \id_{W'}} \one\boxtimes W'\xrightarrow{l_{W'}} W'
\end{equation*}
are identities. The object $W$ is \textit{rigid} if it also has a right dual, defined analogously; for tensor supercategories of modules for a vertex operator superalgebra, left duals are also right duals due to braiding and ribbon structure. In the following we will denote the above two rigidity compositions by $\mathfrak{R}_W$ and $\mathfrak{R}_{W'}$, respectively.

In tensor supercategories of modules for a self-contragredient vertex operator superalgebra, the contragredient $W'$ of a module $W$ satisfies the following universal property due to symmetries of intertwining operators \cite{HLZ2,Xu}: there is an even evaluation $e_W: W'\boxtimes W\rightarrow\one$ (where the unit object $\one$ is the superalgebra itself) such that for any morphism $f: X\boxtimes W\rightarrow\one$, there is a unique $\varphi: X\rightarrow W'$ such that the diagram
\begin{equation*}
 \xymatrix{
 X\boxtimes W \ar[d]_{\varphi\boxtimes\id_W} \ar[rd]^f & \\
 W'\boxtimes W \ar[r]_(.65){e_W} & \one \\
 }
\end{equation*}
commutes. In a general tensor supercategory, we say that a pair $(W', e_W)$ is a \textit{contragredient} of $W$ if it satisfies this universal property. If two objects $W$ and $X$ have contragredients, then a morphism $f: W\rightarrow X$ has a contragredient $f':X'\rightarrow W'$ defined to be the unique morphism such that the diagram
\begin{equation*}
 \xymatrix{
 X'\boxtimes W \ar[r]^{\id_{X'}\boxtimes f} \ar[d]_{f'\boxtimes\id_W} & X'\boxtimes X \ar[d]^{e_X} \\
 W'\boxtimes W \ar[r]_(.6){e_W} & \one\\
 }
\end{equation*}
commutes.
\begin{lemma}
 Let $W$ be an object of a tensor supercategory with contragredient $(W',e_W)$, and let $i_W: \one\rightarrow W\boxtimes W'$ be a morphism. Then the rigidity compositions with respect to $e_W$ and $i_W$ satisfy $\mathfrak{R}_{W'}=\mathfrak{R}_W'$.
\end{lemma}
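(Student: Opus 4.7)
The plan is to reduce the equality $\mathfrak{R}_{W'}=\mathfrak{R}_W'$ to an equality of morphisms $W'\boxtimes W\to\one$ via the universal property of the contragredient. By the very definition of $\mathfrak{R}_W'$ as the contragredient of $\mathfrak{R}_W$, it is the unique morphism $W'\to W'$ satisfying
\[
e_W\circ(\mathfrak{R}_W'\boxtimes\id_W)=e_W\circ(\id_{W'}\boxtimes\mathfrak{R}_W).
\]
Therefore it suffices to verify
\[
e_W\circ(\mathfrak{R}_{W'}\boxtimes\id_W)=e_W\circ(\id_{W'}\boxtimes\mathfrak{R}_W). \tag{$\ast$}
\]

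To prove $(\ast)$, I would expand both sides using the definitions of $\mathfrak{R}_W$ and $\mathfrak{R}_{W'}$. Each side is then a composite built only from $i_W$, $e_W$, the associator $\cA$, and the unit isomorphisms $l$ and $r$. The strategy is a diagram chase using naturality of $\cA$, $l$, $r$ together with the pentagon and triangle coherence axioms. Intuitively, in the graphical calculus both composites consist of inserting the ``zig-zag'' produced by $i_W$ followed by $e_W$ between the strands of $W'\boxtimes W$ before evaluating; one picture puts the zig-zag on the $W$ side and the other on the $W'$ side, and these two pictures are related by a planar isotopy that in the algebraic setting is realized precisely by the coherence axioms. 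Concretely, one isolates the middle subcomposite
\[
(W'\boxtimes W)\boxtimes\one\xrightarrow{\id\boxtimes i_W}(W'\boxtimes W)\boxtimes(W\boxtimes W')\xrightarrow{\text{assoc}}W'\boxtimes\bigl((W\boxtimes W')\boxtimes W\bigr)\xrightarrow{\id\boxtimes(\id\boxtimes e_W)}W'\boxtimes W
\]
appearing inside $e_W\circ(\mathfrak{R}_{W'}\boxtimes\id_W)$ and reparenthesizes it into the analogous subcomposite appearing inside $e_W\circ(\id_{W'}\boxtimes\mathfrak{R}_W)$; the outer unit isomorphisms cancel by the triangle axiom, and the outermost $e_W$'s then match directly.

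I expect the main obstacle to be purely bookkeeping: carefully tracking how the associator reparenthesizes the quadruple tensor products $(W'\boxtimes W)\boxtimes(W\boxtimes W')$ and $W'\boxtimes\bigl(W\boxtimes(W'\boxtimes W)\bigr)$, and invoking the pentagon and triangle axioms in the right places to match the two sides. There is no conceptual difficulty — the identity is formal in any monoidal (super)category with a contragredient — but writing it out explicitly requires a somewhat lengthy commutative diagram. Once $(\ast)$ is established, the uniqueness clause in the universal property of $(W',e_W)$ forces $\mathfrak{R}_{W'}=\mathfrak{R}_W'$, completing the proof.
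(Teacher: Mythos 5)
Your proposal is correct and takes essentially the same route as the paper: reduce $\mathfrak{R}_{W'}=\mathfrak{R}_W'$ to the identity $e_W\circ(\mathfrak{R}_{W'}\boxtimes\id_W)=e_W\circ(\id_{W'}\boxtimes\mathfrak{R}_W)$ via the defining (uniqueness) property of the contragredient morphism, then verify that identity by expanding both rigidity compositions and chasing diagrams using naturality, the pentagon and triangle axioms, and $l_\one = r_\one$. The paper carries out the bookkeeping you describe at a schematic level, but the structure of the argument is identical.
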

\begin{proof}
 We need to show that $e_W\circ(\mathfrak{R}_{W'}\boxtimes\id_W)=e_W\circ(\id_{W'}\boxtimes\mathfrak{R}_W)$. The left side is the composition
 \begin{align*}
  W'  \boxtimes  W  &\xrightarrow{r_{W'}^{-1}\boxtimes\id_W} (W'\boxtimes\one)\boxtimes W\xrightarrow{(\id_{W'}\boxtimes i_W)\boxtimes\id_W} (W'\boxtimes(W\boxtimes W'))\boxtimes W\nonumber\\
  &\xrightarrow{\cA_{W',W,W'}\boxtimes\id_W} ((W'\boxtimes W)\boxtimes W')\boxtimes W\xrightarrow{(e_W\boxtimes\id_{W'})\boxtimes\id_W} (\one\boxtimes W')\boxtimes W\nonumber\\
  &\xrightarrow{l_{W'}\boxtimes\id_W} W'\boxtimes W\xrightarrow{e_W} \one.
 \end{align*}
By properties of the unit isomorphisms and naturality of associativity isomorphisms, this becomes
\begin{align*}
 W' & \boxtimes W \xrightarrow{\id_{W'}\boxtimes l_W^{-1}} W'\boxtimes(\one\boxtimes W)\xrightarrow{\id_{W'}\boxtimes(i_W\boxtimes\id_W)} W'\boxtimes((W\boxtimes W')\boxtimes W)\nonumber\\
 & \xrightarrow{\cA_{W',W\boxtimes W',W}} (W'\boxtimes(W\boxtimes W'))\boxtimes W\xrightarrow{\cA_{W',W,W'}^{-1}\boxtimes\id_W} ((W'\boxtimes W)\boxtimes W')\boxtimes W\nonumber\\
 &\xrightarrow{\cA_{W'\boxtimes W,W',W}^{-1}} (W'\boxtimes W)\boxtimes(W'\boxtimes W)\xrightarrow{e_W\boxtimes\id_{W'\boxtimes W}} \one\boxtimes(W'\boxtimes W)\xrightarrow{l_{W'\boxtimes W}} W'\boxtimes W\xrightarrow{e_W} \one.
\end{align*}
We then apply the pentagon axiom to the associativity isomorphisms and naturality of unit and associativity isomorphisms to the second $e_W$ to obtain
\begin{align*}
 W'  \boxtimes W & \xrightarrow{\id_{W'}\boxtimes l_W^{-1}} W'\boxtimes(\one\boxtimes W)\xrightarrow{\id_{W'}\boxtimes(i_W\boxtimes\id_W)} W'\boxtimes((W\boxtimes W')\boxtimes W)\nonumber\\
 & \xrightarrow{\id_{W'}\boxtimes\cA_{W,W',W}^{-1}} W'\boxtimes(W\boxtimes(W'\boxtimes W)\xrightarrow{\id_{W'}\boxtimes(\id_W\boxtimes e_W)} W'\boxtimes(W\boxtimes\one)\nonumber\\
 &\xrightarrow{\cA_{W',W,\one}} (W'\boxtimes W)\boxtimes\one\xrightarrow{e_W\boxtimes\id_\one} \one\boxtimes\one\xrightarrow{l_\one} \one.
\end{align*}
But since $l_\one=r_\one$, the last three arrows become
\begin{align*}
 r_\one\circ(e_W\boxtimes\id_\one)\circ\cA_{W',W,\one} = e_W\circ r_{W'\boxtimes W}\circ\cA_{W',W,\one}=e_W\circ(\id_{W'}\boxtimes r_W),
\end{align*}
so the entire composition is $e_W\circ(\id_{W'}\circ\mathfrak{R}_W)$.
\end{proof}

\begin{cor}\label{cor:RigW_enough}
 Let $W$ be a simple object of a tensor supercategory with contragredient $(W',e_W)$, and let $i_W:\one\rightarrow W\boxtimes W'$ be an even morphism. If the rigidity composition $\mathfrak{R}_W$ with respect to $e_W$ and $i_W$ is non-zero, then $W$ is left rigid.
\end{cor}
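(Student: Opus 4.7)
The plan is to combine the preceding lemma $\mathfrak{R}_{W'} = \mathfrak{R}_W'$ with a Schur-type argument, thereby reducing rigidity of $W$ to the invertibility of the single composition $\mathfrak{R}_W$. The intuition is that once we know $\mathfrak{R}_W$ is a non-zero scalar times the identity, we can rescale the coevaluation $i_W$ to turn the first rigidity composition into $\id_W$, and then the lemma hands us the second rigidity composition for free.

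First, because all morphisms in the composition defining $\mathfrak{R}_W$ (including $e_W$ and the assumed-even $i_W$) are even, $\mathfrak{R}_W$ is an even endomorphism of $W$. Since $W$ is simple and the supercategory is $\CC$-linear, the even endomorphism ring of $W$ is a division algebra over $\CC$; assuming it is finite-dimensional (as is standard for module categories of vertex operator superalgebras), it equals $\CC\cdot\id_W$. Hence $\mathfrak{R}_W = \lambda\cdot\id_W$ for some $\lambda\in\CC$, and the hypothesis forces $\lambda\neq 0$. Set $\tilde{i}_W := \lambda^{-1} i_W$, which is still even. The composition $\mathfrak{R}_W$ is $\CC$-linear in $i_W$ since $i_W$ appears exactly once as a tensor factor, so the new rigidity composition built from $(e_W,\tilde{i}_W)$ is $\lambda^{-1}\cdot\mathfrak{R}_W = \id_W$.

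Finally, I apply the preceding lemma to the pair $(e_W,\tilde{i}_W)$: it gives $\mathfrak{R}_{W'} = (\mathfrak{R}_W)' = (\id_W)' = \id_{W'}$, since taking contragredients is a functor and so preserves identity morphisms. Thus $(W', e_W, \tilde{i}_W)$ satisfies both rigidity axioms, exhibiting $W$ as left rigid. The only delicate point is the Schur-type step — one must be sure the even endomorphism ring of a simple object in the ambient supercategory really is $\CC$ — but in the cases of interest ($KL_k$ and $\cO_k^{fin}$) this follows from the standard argument applied to the even part of $\End(W)$.
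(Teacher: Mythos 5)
Your proof is correct and follows essentially the same route as the paper's: Schur's lemma gives $\mathfrak{R}_W = c\cdot\id_W$ with $c\neq 0$, and then the preceding lemma $\mathfrak{R}_{W'} = \mathfrak{R}_W'$ yields the second rigidity axiom after rescaling the coevaluation by $c^{-1}$. The extra caveats you raise about the even endomorphism ring being $\CC\cdot\id_W$ are appropriate and implicitly assumed in the paper as well.
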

\begin{proof}
 Since $W$ is simple and $\mathfrak{R}_W$ is even and non-zero, we have $\mathfrak{R}_W=c\cdot\id_W$ for some non-zero scalar $c$. Then by the lemma, $$\mathfrak{R}_{W'}=\mathfrak{R}_W'=(c\cdot\id_W)'=c\cdot\id_{W'},$$
 so $(W',e_W,c^{-1}\cdot i_W)$ is a left dual of $W$.
\end{proof}

Now we can prove that the simple objects of $KL_k$ and $\cO_k^{fin}$ are rigid. For the atypical simple modules $\widehat{A}^k_{n,\ell k}$, $\ell\in\ZZ$, this is easy: By Proposition \ref{prop:contra}, $(\widehat{A}^k_{n,\ell k})'\cong\widehat{A}^k_{-n,-\ell k}$, and Theorem \ref{thm:atyp_atyp_fusion} shows that $\widehat{A}^k_{n,\ell k}\boxtimes\widehat{A}^k_{-n,-\ell k}\cong\widehat{A}^k_{0,0}\cong V_k(\gl(1|1))$. So we can take both evaluation and coevaluation to be isomorphisms, and then Corollary \ref{cor:RigW_enough} shows that $\widehat{A}^k_{n,\ell k}$ is rigid. (Actually, for $\ell\neq 0$, we should take $(\widehat{A}^k_{n,\ell k})'$ to be the parity-reversed module $\Pi(\widehat{A}^k_{-n,-\ell k})$ to ensure that evaluation and coevaluation are both even.)

Now for the typical irreducible modules, Proposition \ref{prop:contra} shows that we may take $(\widehat{V}^k_{n,e})'$ to be $\Pi(\widehat{V}^k_{-n,-e})$. We first fix a choice of evaluation and coevaluation. For the evaluation, let $\cE$ denote the intertwining operator of type $\binom{V_k(\gl(1|1))}{\Pi(\widehat{V}^k_{-n,-e})\,\widehat{V}^k_{n,e}}$ induced by the nondegenerate bilinear form
\begin{equation*}
 \langle\cdot,\cdot\rangle: \Pi(\widehat{V}^k_{-n,-e})\times\widehat{V}^k_{n,e}\rightarrow\CC
\end{equation*}
such that $\langle\psi^-v_{-n,-e},v_{n,e}\rangle=\langle v_{-n,-e},\psi^- v_{n,e}\rangle =1$. In particular, for lowest-conformal-weight vectors $v'\in\Pi(V_{-n,-e})$, $v\in V_{n,e}$, we have
\begin{equation*}
 \cE(v',x)v \in x^{-2\Delta_{n,e}}\big( \langle v',v\rangle\one+x\,V_k(\gl(1|1))[[x]]\big).
\end{equation*}
We then define the evaluation $\varepsilon:\Pi(\widehat{V}^k_{-n,-e})\boxtimes\widehat{V}^k_{n,e}\rightarrow V_k(\gl(1|1))$ to be the unique homomorphism such that $\varepsilon\circ\cY_\boxtimes=\cE$, where $\cY_\boxtimes$ denotes the canonical even tensor product intertwining operator of type $\binom{\Pi(\widehat{V}^k_{-n,-e})\boxtimes\widehat{V}^k_{n,e}}{\Pi(\widehat{V}^k_{-n,-e})\,\widehat{V}^k_{n,e}}$.

For the coevaluation, we first note that there is an even $\gl(1|1)$-module homomorphism
$A_0\rightarrow V_{n,e}\otimes \Pi(V_{-n,-e})$ defined by
\begin{equation*}
 \one \mapsto \psi^- v_{n,e}\otimes v_{-n,-e}+v_{n,e}\otimes\psi^- v_{-n,-e}.
\end{equation*}
We can compose this with the $\gl(1|1)$-homomorphism
\begin{equation*}
 \pi(\cY_\boxtimes): V_{n,e}\otimes\Pi(V_{-n,-e})\rightarrow(\widehat{V}^k_{n,e}\boxtimes\Pi(\widehat{V}^k_{-n,-e}))(0)
\end{equation*}
of Section \ref{subsec:gen_fus_rules}, and then use the universal property of induced $\widehat{\gl(1|1)}$-modules to extend to a homomorphism
\begin{equation*}
 i: V_k(\gl(1|1))\rightarrow\widehat{V}^k_{n,e}\boxtimes\Pi(\widehat{V}^k_{-n,-e}).
\end{equation*}
By definition,
\begin{equation*}
 i(\one) =\pi_0\left(\cY_\boxtimes(\psi^-v_{n,e},1)v_{-n,-e}+\cY_\boxtimes(v_{n,e},1)\psi^- v_{-n,-e}\right).
\end{equation*}
Equivalently, $i(\one)$ is the coefficient of $x^{-2\Delta_{n,e}}$ in $\cY_\boxtimes(\psi^-v_{n,e},x)v_{-n,-e}+\cY_\boxtimes(v_{n,e},x)\psi^- v_{-n,-e}.$

Now Corollary \ref{cor:RigW_enough} implies that $\widehat{V}^k_{n,e}$ will be rigid if the rigidity composition
\begin{equation*}
 \mathfrak{R}=r\circ(\id\boxtimes\varepsilon)\circ\cA^{-1}\circ(i\boxtimes\id)\circ l^{-1}
\end{equation*}
is non-zero. We shall prove this by showing $\langle \psi^-v_{-n,-e},\mathfrak{R}(v_{n,e})\rangle\neq 0$. From the definitions, $(i\boxtimes \id)\circ l^{-1}(v_{n,e})$ is the coefficient of $\left(\frac{1-x}{x}\right)^{-2\Delta_{n,e}}$ in the series
\begin{align*}
 \cY_\boxtimes\left(\cY_\boxtimes\left(\psi^- v_{n,e},\frac{1-x}{x}\right)v_{-n,-e},1\right)v_{n,e} +\cY_\boxtimes\left(\cY_\boxtimes\left(v_{n,e},\frac{1-x}{x}\right)\psi^-v_{-n,-e},1\right)v_{n,e}.
\end{align*}
Note also that $\left(\frac{1-x}{x}\right)^{-2\Delta_{n,e}}$ is the lowest power of $\frac{1-x}{x}$ in this series since $0$ is the lowest conformal weight of $\widehat{V}_{n,e}^k\btimes\Pi(\widehat{V}_{-n,-e}^k)$ (recall Lemma \ref{lem:usefullemma_1} and Remark \ref{rem:useful}).
We take $x$ to be a real number in the interval $(\frac{1}{2},1)=U\cap\widetilde{U}\cap\RR$, and then recalling \eqref{eqn:iterate_series_exp}, we find that  $\langle\psi^- v_{-n,-e},\mathfrak{R}(v_{n,e})\rangle$ is the coefficient of $\left(\frac{1-x}{x}\right)^{-2\Delta_{n,e}}$ in the expansion of the following analytic function as a series in $\frac{1-x}{x}$ and $\ln\left(\frac{1-x}{x}\right)$ on $(\frac{1}{2},1)$:
\begin{align}\label{eqn:rig_calc}
& \left\langle \psi^-v_{-n,-e},\overline{r\circ(\id\boxtimes\varepsilon)\circ\cA^{-1}}\circ\cY_\boxtimes(\cY_\boxtimes(\psi^-v_{n,e},1-x)v_{-n,-e},x)v_{n,e}\right\rangle\nonumber\\
 &\qquad\qquad +\left\langle \psi^-v_{-n,-e},\overline{r\circ(\id\boxtimes\varepsilon)\circ\cA^{-1}}\circ\cY_\boxtimes(\cY_\boxtimes(v_{n,e},1-x)\psi^-v_{-n,-e},x)v_{n,e}\right\rangle\nonumber\\
 &\qquad =\left\langle\psi^-v_{-n,-e},\overline{r\circ(\id\boxtimes\varepsilon)}\circ\cY_\boxtimes(\psi^-v_{n,e},1)\cY_\boxtimes(v_{-n,-e},x)v_{n,e}\right\rangle\nonumber\\
 &\qquad\qquad +\left\langle\psi^-v_{-n,-e},\overline{r\circ(\id\boxtimes\varepsilon)}\circ\cY_\boxtimes(v_{n,e},1)\cY_\boxtimes(\psi^-v_{-n,-e},x)v_{n,e}\right\rangle\nonumber\\
 &\qquad =\left\langle\psi^-v_{-n,-e},\Omega(Y_{\widehat{V}^k_{n,e}})(v_{n,e},1)\cE(v_{-n,-e},x)\psi^-v_{n,e}\right\rangle,
\end{align}
where we have used \eqref{reduction1s} for the last equality, and $\Omega(Y_{\widehat{V}_{n,e}^k})$ is the intertwining operator of type $\binom{\widehat{V}^k_{n,e}}{\widehat{V}^k_{n,e}\,V_k(\gl(1|1))}$ obtained from the vertex operator by skew-symmetry.

By Theorem \ref{thm:diff_eq}, \eqref{eqn:rig_calc} is a solution to the differential equation \eqref{eqn:main_diff_eq}. As a series in $x$, it is non-logarithmic with lowest-degree term
\begin{equation*}
 \langle\psi^-v_{-n,-e},v_{n,e}\rangle\langle v_{-n,-e},\psi^-v_{n,e}\rangle x^{-2\Delta_{n,e}} =x^{-2\Delta_{n,e}},
\end{equation*}
so \eqref{eqn:rig_calc}  is the fundamental basis solution
\begin{equation*}
 \phi^{(1)}(x)=x^{-2\Delta_{n,e}}(1-x)^{-2\Delta_{n,e}} {}_2 F_1\left(\frac{e}{k},-\frac{e}{k};1;x\right).
\end{equation*}
Thus we are reduced to showing that in the expansion of ${}_2 F_1\left(\frac{e}{k},-\frac{e}{k};1;x\right)$ as a series in $\frac{1-x}{x}$ and $\ln\left(\frac{1-x}{x}\right)$ on the interval $(\frac{1}{2},1)$, the contant term is non-zero. Indeed, from \cite[Eq. 15.8.11]{DLMF}, the constant term in this expansion is $\left[\Gamma\left(1+\frac{e}{k}\right)\Gamma\left(1-\frac{e}{k}\right)\right]^{-1}=\frac{\sin(\pi e/k)}{\pi e/k}\neq 0$ since $e/k\notin\ZZ$. We conclude that $\widehat{V}^k_{n,e}$ is rigid.

We now extend rigidity from simple objects to the full supercategories $KL_k$ and $\cO_k^{fin}$. First recall that all modules in both categories have finite length (in the sense that every module has a finite filtration of $\ZZ_2$-graded submodules such that the consecutive quotients are $\ZZ_2$-graded simple $V_k(\gl(1|1))$-modules). From this, it is clear that $KL_k$ is closed under $\ZZ_2$-graded submodules and quotients, and the same is also clear for $\cO_k^{fin}$. Moreover, since taking contragredients is an exact functor, $KL_k$ is closed under contragredients; $\cO_k^{fin}$ is also closed under contragredients by the $a_r=E_0,N_0$ cases of \eqref{eqn:aff_contra}. These are the conditions needed to apply the (straightforward superalgebra generalization of) \cite[Thm 4.4.1]{CMY2}, which states that if simple objects are rigid, then so are finite-length objects:
\begin{theorem}\label{rigidityofcategoryO}
 The tensor supercategories $KL_k$ and $\cO_k^{fin}$ are rigid; moreover, they are braided ribbon tensor supercategories with even natural twist isomorphism $\theta=e^{2\pi i L_0}$.
\end{theorem}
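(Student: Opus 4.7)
The plan is to assemble the ingredients already developed in the preceding subsections and invoke the extension result \cite[Thm. 4.4.1]{CMY2}. The strategy splits cleanly into two stages: first establish rigidity on the simple objects, and then lift it to arbitrary finite-length objects by checking the categorical hypotheses required for the extension theorem.

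For the first stage, the simple objects of $KL_k$ and $\cO_k^{fin}$ split into two families. The atypical simple modules $\widehat{A}^k_{n,\ell k}$ for $n\in\CC$, $\ell\in\ZZ$ are rigid by Theorem \ref{thm:atyp_atyp_fusion} together with Proposition \ref{prop:contra}: the contragredient is (up to parity reversal) $\widehat{A}^k_{-n,-\ell k}$, and the fusion product with it is $V_k(\gl(1|1))$, so any choice of non-zero even evaluation and coevaluation morphisms are isomorphisms, and Corollary \ref{cor:RigW_enough} applies directly. The typical simple modules $\widehat{V}^k_{n,e}$ for $e/k\notin\ZZ$ are rigid by the computation carried out above using Theorem \ref{thm:diff_eq}: the relevant rigidity composition $\mathfrak{R}$, paired against a lowest-conformal-weight vector, is shown to equal the analytic continuation of the fundamental hypergeometric solution $\phi^{(1)}(z)$ to the region $\widetilde{U}$; the non-vanishing of the relevant coefficient reduces to $\Gamma(1+e/k)^{-1}\Gamma(1-e/k)^{-1}=\frac{\sin(\pi e/k)}{\pi e/k}\neq 0$, which holds precisely because $e/k\notin\ZZ$. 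Thus again Corollary \ref{cor:RigW_enough} shows $\widehat{V}^k_{n,e}$ is rigid.

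For the second stage, I would verify that both $KL_k$ and $\cO_k^{fin}$ satisfy the hypotheses of \cite[Thm. 4.4.1]{CMY2} (in its straightforward supercategory generalization). By definition, every object of $KL_k$ has finite length, and hence so does every object of $\cO_k^{fin}$. Both categories are clearly closed under taking $\ZZ_2$-graded subobjects and quotients. For closure under contragredients, $KL_k$ is closed because the contragredient functor is exact and preserves finite length, and $\cO_k^{fin}$ is closed by inspecting the formula \eqref{eqn:aff_contra}: if $E_0$ and $N_0$ act semisimply on $W$, they also act semisimply on $W'$. With all simple objects rigid and these closure conditions in place, \cite[Thm. 4.4.1]{CMY2} immediately yields rigidity of every finite-length object, so $KL_k$ and $\cO_k^{fin}$ are rigid tensor supercategories.

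Finally, for the ribbon structure, I would note that $\theta = e^{2\pi i L_0}$ is a well-defined even natural automorphism of the identity functor because $L_0$ acts locally finitely on every object of $KL_k$ (with finitely many generalized eigenspaces in each module, by the finite-length hypothesis). The compatibility of $\theta$ with the braiding and with duals is standard for braided tensor categories of vertex operator (super)algebra modules, following from the monodromy formulas for intertwining operators combined with the evenness of the contragredient pairing; see for instance the analogous arguments in \cite{HLZ8, CKM}. The only conceptual step to watch is parity: since we have arranged evaluation and coevaluation to be even (using parity-reversed contragredients where necessary, as in the case $\widehat{A}^k_{n,\ell k}$ with $\ell\neq 0$ and $\widehat{V}^k_{n,e}$ above), the resulting ribbon twist is automatically even. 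The main obstacle in the whole proof is really concentrated in the typical-module rigidity calculation, which has already been carried out; everything else is assembly and checking of routine closure properties.
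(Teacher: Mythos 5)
Your proposal matches the paper's proof essentially verbatim: rigidity is established for simple objects (atypicals via simple-current fusion and Corollary \ref{cor:RigW_enough}, typicals via the KZ/hypergeometric computation), and then lifted to all finite-length objects by verifying the closure hypotheses (finite length, closure under subquotients, closure under contragredients) needed for \cite[Thm.~4.4.1]{CMY2}. The one place you go slightly beyond the paper is the short sanity check that $\theta = e^{2\pi i L_0}$ is a well-defined even twist, which the paper simply asserts; note, though, that your parenthetical ``finitely many generalized eigenspaces in each module'' is not literally true (infinite-dimensional modules have infinitely many conformal weight spaces) --- what the finite-length hypothesis actually gives you is a uniform bound on the nilpotency degree of $L_0 - h$ on each weight space, which is the correct reason $e^{2\pi i L_0}$ makes sense.
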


\section{Simple current extensions of \texorpdfstring{$V_1(\gl(1|1))$}{V1(gl(1|1))}}
There are a few families of vertex operator superalgebras extending $V_k(\gl(1|1))$. These are all infinite-order simple current extensions, and we refer to \cite{CKL} for the general theory.

The following simple current extensions of $V_1(\gl(1|1))$ are considered in \cite{CR1}:
\begin{equation}\label{generalextension}
\frW_{n+\frac{1}{2}, \ell} = \widehat{A}_{0,0} \oplus \bigoplus_{m \geq 1}(\widehat{A}_{n+\frac{1}{2}, \ell}^{\btimes m}\oplus \widehat{A}_{-n-\frac{1}{2}, -\ell}^{\btimes m})
\end{equation}
for $\ell\in \ZZ$ and $n \in \RR$ such that $|\ell| \leq 2\Delta_{n+\frac{1}{2},\ell}$ and $2n\ell \in \ZZ$. In particular, it is shown that as vertex superalgebras
\begin{equation*}
\frW_{\frac{1}{2},-2} \cong V_{-\frac{1}{2}}(\sl(2|1)), \;\;\; \frW_{\frac{1}{2},1} \cong V_{1}(\sl(2|1)),
\end{equation*}
where $V_{-\frac{1}{2}}(\sl(2|1))$ and $V_{1}(\sl(2|1))$ are the simple vertex operator superalgebras of $\sl(2|1)$ at levels $-\frac{1}{2}$ and $1$, respectively. These superalgebras are thus objects of the direct limit completion $\Ind(\cO_1^{fin})$ of the category $\cO_1^{fin}$ of $V_1(\gl(1|1))$-modules. The direct limit completion is the category of all generalized $V_1(\gl(1|1))$-modules that are unions of submodules in $\cO_1^{fin}$; the existence of vertex and braided tensor category structures on $\Ind(\cO_1^{fin})$ follows after verifying the conditions of \cite[Thm. 1.1]{CMY1}:
\begin{enumerate}
 \item The vertex operator superalgebra $V_1(\gl(1|1))$ is an object of $\cO_1^{fin}$.
 \item The category $\cO_1^{fin}$ is closed under submodules, quotients, and finite direct sums.
 \item Every module in $\cO_1^{fin}$ is finitely generated (since every module in $\cO_1^{fin}$ has finite length).
 \item The category $\cO_1^{fin}$ admits vertex and braided tensor category structures by Theorem \ref{thm:Okfin_tens_cat}.
 \item For any intertwining operator $\cY$ of type $\binom{X}{W_1\,W_2}$ where $W_1$, $W_2$ are modules in $\cO_1^{fin}$ and $X$ is a generalized $V_1(\gl(1|1))$-module in $\Ind(\cO_1^{fin})$, the submodule $\Im\,\cY\subseteq X$ is an object of $\cO_1^{fin}$. (By \cite[Cor. 2.13]{CMY1}, $\Im\,\cY$ is $C_1$-cofinite, that is, an object of $KL_1$. Then $\Im\,\cY$ is an object of $\cO_1^{fin}$ by the $a=E, N$, $r=0$ cases of \eqref{eqn:intw_op_comm}.)
\end{enumerate}
For more details on the braided tensor category structure on $\Ind(\cO_1^{fin})$, see \cite{CMY1}.

%

We can now study $V_{-\frac{1}{2}}(\sl(2|1))$-modules using the induction functor
\begin{equation*}
\cF: \cO_1^{fin} \rightarrow \Rep\,V_{-\frac{1}{2}}(\sl(2|1))
\end{equation*}
from \cite{CKM}, where $\Rep\,V_{-\frac{1}{2}}(\sl(2|1))$ is the category of (not necessarily local) $V_{-\frac{1}{2}}(\sl(2|1))$-modules in $\Ind(\cO_1^{fin})$. By \cite[Lem.~2.65]{CKM}, a $V_1(\gl(1|1))$-module induces to a local $V_{-\frac{1}{2}}(\sl(2|1))$-module, that is, an object of the braided tensor subcategory $\Rep^0\,V_{-\frac{1}{2}}(\sl(2|1))$, if and only if its monodromy with $V_{-\frac{1}{2}}(\sl(2|1))$ is trivial. Note that \eqref{generalextension} gives
\begin{equation*}
\frW_{\frac{1}{2}, -2} = \widehat{A}_{0,0} \oplus \bigoplus_{m \geq 1}(\widehat{A}_{m-\frac{1}{2}, -2m}\oplus \widehat{A}_{-m+\frac{1}{2}, 2m})=\bigoplus_{m\in\ZZ} \widehat{A}_{m-\varepsilon(m),-2m}.
\end{equation*}
So by the fusion rules in Theorem \ref{thm:atyp_atyp_fusion} and the balancing equation with twist $\theta = e^{2\pi i L_0}$, the monodromy of $\widehat{A}_{n,\ell}$ with $\widehat{A}_{m-\varepsilon(m), -2m}$ for $n \in \CC$, $\ell \in \ZZ$, $m \in \ZZ$ is
\begin{align*}
\cM_{\widehat{A}_{n,\ell}, \widehat{A}_{m-\varepsilon(m), -2m}} &= \theta_{\widehat{A}_{n,\ell}\btimes\widehat{A}_{m-\varepsilon(m), -2m}}\circ (\theta_{\widehat{A}_{n,\ell}}^{-1}\btimes \theta_{\widehat{A}_{m-\varepsilon(m), -2m}}^{-1})\nonumber\\
& = \theta_{\widehat{A}_{n+m-\varepsilon(\ell)+\varepsilon(\ell-2m), \ell-2m}}\circ (\theta_{\widehat{A}_{n,\ell}}^{-1}\btimes \theta_{\widehat{A}_{m-\varepsilon(m), -2m}}^{-1})\nonumber\\
& = e^{2\pi i(\Delta_{n+m-\varepsilon(\ell)+\varepsilon(\ell-2m), \ell-2m}-\Delta_{n,\ell}-\Delta_{m-\varepsilon(m), -2m})}.
\end{align*}
From this, we see that $\widehat{A}_{n,\ell}$ induces to a local module if and only if $2n \in \ZZ$. Similarly by the fusion rules in Theorem \ref{thm:atyp_typ_fusion}, the monodromy of the typical module $\widehat{V}_{n,e}$ for $n \in \CC, e \notin \ZZ$ with $\widehat{A}_{m-\varepsilon(m), -2m}$ is
\begin{align*}
\cM_{\widehat{V}_{n,e}, \widehat{A}_{m-\varepsilon(m), -2m}} &= \theta_{\widehat{V}_{n,e}\btimes\widehat{A}_{m-\varepsilon(m), -2m}}\circ (\theta_{\widehat{V}_{n,e}}^{-1}\btimes \theta_{\widehat{A}_{m-\varepsilon(m), -2m}}^{-1})\nonumber\\
& = \theta_{\widehat{V}_{n+m, e-2m}}\circ (\theta_{\widehat{V}_{n,e}}^{-1}\btimes \theta_{\widehat{A}_{m-\varepsilon(m), -2m}}^{-1})\nonumber\\
& = e^{2\pi i(\Delta_{n+m, e-2m}-\Delta_{n,e}-\Delta_{m-\varepsilon(m), -2m})},
\end{align*}
From this, we see that $\widehat{V}_{n,e}$ induces to a local module if and only if $2n+e \in \ZZ$. Moreover, we can determine the simple objects of $\Rep^0\,V_{-\frac{1}{2}}(\sl(2|1))$ as follows:
\begin{prop}\label{prop:simple}
The simple objects of $\Rep^0\,V_{-\frac{1}{2}}(\sl(2|1))$ are of the form $\cF(S)$, where $S$ is either $\widehat{A}_{n,\ell}$ for $n \in \frac{1}{2}\ZZ$, $\ell \in \ZZ$ or $\widehat{V}_{n,e}$ for $n \in \CC$, $e \notin \ZZ$ such that $2n+e \in \ZZ$. Moreover, $\cF(S)\cong \cF(S')$ if and only if there exists $m \in \ZZ$ such that
\begin{equation}\label{simplecurrentequiv}
S' \cong S \btimes \widehat{A}_{m-\varepsilon(m), -2m}.
\end{equation}
\end{prop}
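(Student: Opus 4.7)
The plan is to first show that for every simple $V_1(\gl(1|1))$-module $S$ satisfying the monodromy condition in the proposition, the induction $\cF(S)$ is a simple object of $\Rep^0\, V_{-\frac{1}{2}}(\sl(2|1))$; then show that every simple local module arises this way via Frobenius reciprocity; and finally derive the equivalence criterion from the decomposition of $\cF(S)$ as a $V_1(\gl(1|1))$-module.

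For simplicity of $\cF(S)$, the restriction to $V_1(\gl(1|1))$ decomposes as
\begin{equation*}
\cF(S) \cong V_{-\frac{1}{2}}(\sl(2|1))\btimes S \cong \bigoplus_{m \in \ZZ} \widehat{A}_{m-\varepsilon(m),-2m}\btimes S.
\end{equation*}
By the fusion rules in Theorems \ref{thm:atyp_atyp_fusion} and \ref{thm:atyp_typ_fusion}, each summand is a simple $V_1(\gl(1|1))$-module, and the summands are pairwise non-isomorphic because their $E_0$-eigenvalues $e_S - 2m$ differ. Any nonzero $V_{-\frac{1}{2}}(\sl(2|1))$-submodule $Y\subseteq\cF(S)$ is in particular a $V_1(\gl(1|1))$-submodule, so is a direct sum of some subset of these simple summands. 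Multiplication by the simple current summand $\widehat{A}_{m-\varepsilon(m),-2m}\subset V_{-\frac{1}{2}}(\sl(2|1))$ sends $\widehat{A}_{n-\varepsilon(n),-2n}\btimes S$ isomorphically onto $\widehat{A}_{(m+n)-\varepsilon(m+n),-2(m+n)}\btimes S$; these maps are nonzero because $V_{-\frac{1}{2}}(\sl(2|1))$ is a simple vertex operator superalgebra and hence acts faithfully on $\cF(S)$. Therefore $Y$ contains every summand and $\cF(S)$ is simple.

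For the classification, let $X$ be a simple object of $\Rep^0\, V_{-\frac{1}{2}}(\sl(2|1))$. Since $X$ lies in $\Ind(\cO_1^{fin})$, it contains a simple $V_1(\gl(1|1))$-submodule $S$ with inclusion $i: S \hookrightarrow X$. Because $X$ is local, the monodromy $\cM_{V_g, X}$ with each simple current summand $V_g = \widehat{A}_{m-\varepsilon(m),-2m}$ of $V_{-\frac{1}{2}}(\sl(2|1))$ is the identity; by naturality of monodromy applied to $\id_{V_g}\btimes i$, we conclude $\cM_{V_g, S}$ is also the identity, so $S$ satisfies the monodromy condition of the proposition. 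By Frobenius reciprocity \cite[Sec. 2.7]{CKM}, the inclusion $i$ extends to a nonzero $V_{-\frac{1}{2}}(\sl(2|1))$-module map $\cF(S) \to X$, which is an isomorphism since both sides are simple.

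Finally, if $S' \cong S \btimes \widehat{A}_{m-\varepsilon(m),-2m}$ for some $m$, then $\cF(S') \cong V_{-\frac{1}{2}}(\sl(2|1))\btimes \widehat{A}_{m-\varepsilon(m),-2m}\btimes S \cong V_{-\frac{1}{2}}(\sl(2|1))\btimes S \cong \cF(S)$, since tensoring with the invertible summand $\widehat{A}_{m-\varepsilon(m),-2m}$ permutes the simple current decomposition of $V_{-\frac{1}{2}}(\sl(2|1))$. Conversely, any isomorphism $\cF(S)\cong\cF(S')$ restricts to an isomorphism of $V_1(\gl(1|1))$-modules between $\bigoplus_m \widehat{A}_{m-\varepsilon(m),-2m}\btimes S$ and $\bigoplus_m \widehat{A}_{m-\varepsilon(m),-2m}\btimes S'$; comparing simple summands on either side yields \eqref{simplecurrentequiv}. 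The main obstacle is the simplicity argument, specifically verifying that the multiplication maps of $V_{-\frac{1}{2}}(\sl(2|1))$ connecting the simple current summands of $\cF(S)$ are all nonzero; this relies on the simplicity of the extension vertex operator superalgebra, which ensures no summand can be annihilated by the action.
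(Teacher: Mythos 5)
Your proof follows the same overall route as the paper: classify simple $S$ by the monodromy computation, show $\cF(S)$ is simple, apply Frobenius reciprocity to see every simple local $X$ arises this way, and derive \eqref{simplecurrentequiv}. The one place where you deviate is the simplicity of $\cF(S)$, where the paper simply cites \cite[Prop.~4.4]{CKM} after noting that the summands $\widehat{A}_{m-\varepsilon(m),-2m}\btimes S$ are pairwise non-isomorphic, whereas you attempt a direct argument.

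That direct argument contains a gap exactly where you flagged "the main obstacle." You claim the component maps $\widehat{A}_{m-\varepsilon(m),-2m}\otimes\bigl(\widehat{A}_{n-\varepsilon(n),-2n}\btimes S\bigr)\to\widehat{A}_{(m+n)-\varepsilon(m+n),-2(m+n)}\btimes S$ are nonzero "because $V_{-\frac{1}{2}}(\sl(2|1))$ acts faithfully on $\cF(S)$." Faithfulness only says no nonzero element of the extension acts as zero on \emph{all} of $\cF(S)$; it does not rule out the possibility that the action of the summand $\widehat{A}_{m-\varepsilon(m),-2m}$ kills some particular simple summand $\widehat{A}_{n-\varepsilon(n),-2n}\btimes S$ while being nonzero on others. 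The correct reason is simplicity of $V_{-\frac{1}{2}}(\sl(2|1))$ as a vertex operator superalgebra combined with associativity of the module action: the internal multiplication component $\widehat{A}_{-m+\varepsilon(m),2m}\btimes\widehat{A}_{m-\varepsilon(m),-2m}\to V_1(\gl(1|1))$ is nonzero by simplicity of the extension, and composing it via associativity with the (nonzero) action of the identity component forces each $\mu_{m,n}$ to be nonzero. This is precisely the content packaged in \cite[Prop.~4.4]{CKM}, and citing it is cleaner than reconstructing the argument. Aside from this, your handling of locality (pushing the monodromy condition from $X$ down to $S$ via naturality and then applying Frobenius reciprocity) is a slight reordering of the paper's argument, and both orderings are valid.
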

\begin{proof}
Let $X$ be a simple object of $\Rep^0\,V_{-\frac{1}{2}}(\sl(2|1))$ and let $\cG$ be the restriction functor $\Rep^0\,V_{-\frac{1}{2}}(\sl(2|1)) \rightarrow \Ind(\cO_1^{fin})$. As an object of $\Ind(\cO_1^{fin})$, $\cG(X)$ is the union of $\cO_1^{fin}$-submodules; thus because every non-zero object of $\cO_1^{fin}$ contains an irreducible submodule, $\cG(X)$ contains an irreducible submodule $S$. By Frobenius reciprocity,
\[
\Hom(\cF(S), X) \cong \Hom(S, \cG(X)) \neq 0,
\]
so if $\cF(S)$ is simple, then $\cF(S)\cong X$.
Indeed, by examining the $E_0$-eigenvalues of $\widehat{A}_{m-\varepsilon(m), -2m}\btimes S$,
\[
\widehat{A}_{m-\varepsilon(m), -2m}\btimes S \ncong \widehat{A}_{m'-\varepsilon(m'), -2m'}\btimes S
\]
for $m \neq m'$, and then \cite[Prop.~4.4]{CKM} shows $\cF(S)$ is simple. Moreover, we have seen that $\cF(S)$ is an object in $\Rep^0\,V_{-\frac{1}{2}}(\sl(2|1))$ if and only if $S = \widehat{A}_{n,\ell}$ for $n \in \frac{1}{2}\ZZ$, $\ell \in \ZZ$ or $\widehat{V}_{n,e}$ for $n \in \CC$, $e \notin \ZZ$ such that $2n+e \in \ZZ$.

The condition \eqref{simplecurrentequiv} follows from Frobenius reciprocity.
\end{proof}

\begin{remark}
As $V_1(\gl(1|1))$-modules,
\[
\cF(\widehat{A}_{n,\ell}) = \bigoplus_{m\in \ZZ}\widehat{A}_{n+m-\varepsilon(\ell)+\varepsilon(\ell-2m), \ell-2m},
\]
and
\[
\cF(\widehat{V}_{n,e}) = \bigoplus_{m\in \ZZ}\widehat{V}_{n+m, e-2m}.
\]
Since the lowest conformal weights of the summands
$\widehat{A}_{n+m-\varepsilon(\ell)+\varepsilon(n-2m), \ell-2m}$
and $\widehat{V}_{n+m, e-2m}$ are both linear functions of $m$, most of the induced simple objects $\cF(\widehat{A}_{n,\ell})$ and $\cF(\widehat{V}_{n,e})$ are not lowest-weight modules for $V_{-\frac{1}{2}}(\sl(2|1))$.

The character of $\widehat{V}_{n,e}$ is
\[
\text{ch}[\widehat{V}_{n,e}](z, y; q) =  q^{\Delta_{n,e}}y^ez^n \prod_{i=0}^\infty\frac{(1+zq^{i+1})(1+z^{-1}q^i)}{(1-q^{i+1})^2}
\]
so that we get
\begin{equation}\label{eq:RHWch}
\begin{split}
\text{ch}[\cF(\widehat{V}_{n,e})](z, y; q) &=  \text{ch}[\widehat{V}_{n,e}](z, y; q)  \sum_{m \in\ZZ }     q^{-m(2n+e)}y^{-2m}z^m.
\end{split}
\end{equation}
Thus $\cF(\widehat{V}_{n,e})$ has an infinite-dimensional lowest-conformal-weight space if $2n+e=0$ and has unbounded conformal weights otherwise. These are examples of relaxed highest-weight modules and their images under spectral flow.
\end{remark}

Now that we have determined the simple modules in $\Rep^0\,V_{-\frac{1}{2}}(\sl(2|1))$, we determine their projective covers. First, we need a lemma:
\begin{lemma}\label{lem:ind_proj}
 If $P$ is projective in $\cO_1^{fin}$, then $\cF(P)$ is projective in $\Rep\,V_{-\frac{1}{2}}(\sl(2|1))$.
\end{lemma}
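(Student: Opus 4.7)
The plan is to prove projectivity of $\cF(P)$ by Frobenius reciprocity, i.e.\ by using that induction $\cF$ is left adjoint to restriction $\cG : \Rep\,V_{-\frac{1}{2}}(\sl(2|1))\to\mathrm{Ind}(\cO_1^{fin})$ (as established in \cite[Sec.~2.7]{CKM}). This yields a natural isomorphism
\begin{equation*}
\Hom_{V_{-\frac{1}{2}}(\sl(2|1))}(\cF(P),-)\;\cong\;\Hom_{V_1(\gl(1|1))}(P,\cG(-)),
\end{equation*}
so the task reduces to showing that $\Hom(P,\cG(-))$ sends short exact sequences in $\Rep\,V_{-\frac{1}{2}}(\sl(2|1))$ to surjections. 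Since $\cG$ is exact (it is essentially forgetful), the real content is to show that if $0\to X_1\to X_2\xrightarrow{\pi} X_3\to 0$ is exact in $\mathrm{Ind}(\cO_1^{fin})$ and $f:P\to X_3$ is a morphism, then $f$ lifts through $\pi$.

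The key step is to reduce this lifting problem to one inside $\cO_1^{fin}$, where projectivity of $P$ is available by hypothesis. First, because $P$ has finite length, the image $f(P)\subseteq X_3$ is finite length, hence an object of $\cO_1^{fin}$; call it $Y_3$. Pick finitely many generators of $Y_3$ and lift each one through $\pi$ to a vector in $X_2$. Because $X_2$ lies in $\mathrm{Ind}(\cO_1^{fin})$, it is the directed union of its $\cO_1^{fin}$-submodules, so these finitely many lifts all lie in a single finite-length submodule $Z\subseteq X_2$. Setting $Y_2 := Z\cap\pi^{-1}(Y_3)$ gives a finite-length submodule of $X_2$ on which $\pi$ restricts to a surjection $Y_2\twoheadrightarrow Y_3$. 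We thus have an exact sequence $0\to\ker\pi\cap Y_2\to Y_2\to Y_3\to 0$ entirely inside $\cO_1^{fin}$, and projectivity of $P$ yields a lift $g:P\to Y_2\subseteq X_2$ with $\pi\circ g = f$. Naturality of the Frobenius reciprocity isomorphism in the second variable then transports this lift back to the required lift in $\Rep\,V_{-\frac{1}{2}}(\sl(2|1))$.

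The only potentially subtle point is the construction of the finite-length submodule $Y_2$, which requires knowing that $X_2$ is a directed union of $\cO_1^{fin}$-submodules and that $f(P)$ is finitely generated. Both are immediate: the former is the definition of $\mathrm{Ind}(\cO_1^{fin})$, and the latter is because $P$ (and hence $f(P)$) has finite length in $\cO_1^{fin}$. There is no obstruction coming from the superalgebra structure, since submodules, images, and kernels in the above argument are automatically $\ZZ_2$-graded.
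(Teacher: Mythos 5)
Your proof is correct and is essentially the same as the paper's: the paper also first establishes that $P$ remains projective in $\Ind(\cO_1^{fin})$ by lifting finitely many generators of the image into a finite-length $\cO_1^{fin}$-submodule of the source, and then concludes by Frobenius reciprocity, expressed there as the observation that $\Hom_{\Rep}(\cF(P),-)=\Hom_{\Ind(\cO_1^{fin})}(P,\cG(-))$ is a composition of the exact restriction functor with the exact functor $\Hom(P,-)$.
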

\begin{proof}
 We first verify that $P$ is projective in $\Ind(\cO_1^{fin})$. Thus suppose we have a diagram
 \begin{equation*}
  \xymatrix{
  & P \ar[d]^q \\
  X \ar[r]_p & Y \\
  }
 \end{equation*}
in $\Ind(\cO_1^{fin})$ with $p$ surjective. Since $\cO_1^{fin}$ is closed under quotients, $\Im\,q\subseteq Y$ is a (finitely-generated) object of $\cO_1^{fin}$. Then since $X$, as an object of $\Ind(\cO_1^{fin})$, is the union of its $\cO_1^{fin}$-submodules, there are finitely many $\cO_1^{fin}$-submodules $\lbrace W_i\rbrace$ which contain preimages of a generating set for $\Im\,q$. Because $\cO_1^{fin}$ is closed under finite direct sums and quotients, the submodule $W=\sum W_i$ is an object of $\cO_1^{fin}$, and $p(W)\subseteq Y$ is an $\cO_1^{fin}$-submodule that contains $\Im\,q$. Now because $P$ is projective in $\cO_1^{fin}$, there is a map $f: P\rightarrow W$ such that $q=p\vert_W\circ f$. Interpreting $f$ as a map into $X$, we conclude that $P$ is projective in $\Ind(\cO_1^{fin})$.

Now, as in \cite[Thm. 17]{ACKR}, Frobenius reciprocity implies that $\cF(P)$ is projective in $\Rep\,V_{-\frac{1}{2}}(\sl(2|1))$. Specifically, $\Hom_{\Rep\,V_{-\frac{1}{2}}(\sl(2|1))}(\cF(P),\cdot)$ is exact since it is the composition of two exact functors: the restriction functor $\cG: \Rep\,V_{-\frac{1}{2}}(\sl(2|1))\rightarrow\Ind(\cO_1^{fin})$ and the $\Hom$ functor $\Hom_{\Ind(\cO_1^{fin})}(P,\cdot)$.
\end{proof}

Now we can prove:
\begin{prop}\label{prop:proj}
 Suppose $S$ is a simple $V_1(\gl(1|1))$-module with projective cover $P_S$ in $\cO_1^{fin}$ such that $\cF(S)$ is an object of $\Rep^0\,V_{-\frac{1}{2}}(\sl(2|1))$. Then $\cF(P_S)$ is a projective cover of $\cF(S)$ in $\Rep^0\,V_{-\frac{1}{2}}(\sl(2|1))$.
\end{prop}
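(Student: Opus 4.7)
The plan has three steps: show that $\cF(P_S)$ belongs to the local subcategory $\Rep^0\,V_{-\frac{1}{2}}(\sl(2|1))$; show that $\cF(P_S)$ is projective there; and identify $\cF(P_S)$ as the projective cover of $\cF(S)$ by analyzing its simple quotients via Frobenius reciprocity.

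For locality, if $S$ is typical then $P_S=S$ and there is nothing to check. If $S=\widehat{A}^1_{n,\ell}$ is atypical then $P_S$ is (a spectral-flow image of) $\widehat{P}^1_n$, whose Loewy diagram has composition factors of the form $\widehat{A}^1_{n',\ell}$ with $n'\in\{n,n\pm 1\}$; each such factor satisfies $2n'\in\ZZ$ whenever $2n\in\ZZ$, so each induces to a local simple $V_{-\frac{1}{2}}(\sl(2|1))$-module by the monodromy criterion computed just before Proposition~\ref{prop:simple}. To propagate locality from composition factors to $\cF(P_S)$ itself, I would compute the monodromy $\cM_{\widehat{A}^1_{m-\varepsilon(m),-2m},P_S}$ with each simple summand of $V_{-\frac{1}{2}}(\sl(2|1))$ using the balancing equation applied to the twist $\theta=e^{2\pi i L_0}$: the scalar part reduces to the locality condition for the lowest-conformal-weight factor of $\widehat{A}^1_{m-\varepsilon(m),-2m}\boxtimes P_S$ (already an integer by the composition-factor analysis), while the unipotent part cancels because tensoring by the simple current $\widehat{A}^1_{m-\varepsilon(m),-2m}$ shifts $L_0$ by a scalar. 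Projectivity of $\cF(P_S)$ in the larger category $\Rep\,V_{-\frac{1}{2}}(\sl(2|1))$ is Lemma~\ref{lem:ind_proj}, and since $\Rep^0\,V_{-\frac{1}{2}}(\sl(2|1))$ is a full subcategory, this projectivity immediately restricts.

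To finish, apply $\cF$ (right exact as $V_{-\frac{1}{2}}(\sl(2|1))\boxtimes{-}$) to a surjection $P_S\twoheadrightarrow S$ to obtain a surjection $\cF(P_S)\twoheadrightarrow\cF(S)$. For any simple $T\in\Rep^0\,V_{-\frac{1}{2}}(\sl(2|1))$, Proposition~\ref{prop:simple} writes $T=\cF(S')$, and Frobenius reciprocity yields
\begin{equation*}
\Hom_{\Rep^0\,V_{-\frac{1}{2}}(\sl(2|1))}(\cF(P_S),\cF(S'))\cong\bigoplus_{m\in\ZZ}\Hom_{\cO_1^{fin}}\left(P_S,\widehat{A}^1_{m-\varepsilon(m),-2m}\boxtimes S'\right).
\end{equation*}
Since $P_S$ lies in a single $E_0$-eigenspace while the right-hand summands have pairwise distinct $E_0$-eigenvalues, at most one value of $m$ contributes; as each $\widehat{A}^1_{m-\varepsilon(m),-2m}\boxtimes S'$ is simple and $P_S$ is the projective cover of $S$, that contribution is one-dimensional precisely when $\widehat{A}^1_{m-\varepsilon(m),-2m}\boxtimes S'\cong S$, equivalently $\cF(S')\cong\cF(S)$. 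Hence $\cF(S)$ is the unique simple quotient of $\cF(P_S)$ and appears with multiplicity one, so $\cF(P_S)$ is indecomposable and the kernel of $\cF(P_S)\twoheadrightarrow\cF(S)$ is its unique maximal submodule, identifying $\cF(P_S)$ as the projective cover of $\cF(S)$. The main obstacle will be the locality step, since $\Rep^0\,A$ is not in general extension-closed in $\Rep\,A$, and the logarithmic $L_0$-blocks on $\widehat{P}^1_{n,\ell}$ make the balancing-equation analysis more delicate than the scalar computation available for simple modules.
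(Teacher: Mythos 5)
Your overall strategy matches the paper's (locality, projectivity via Lemma \ref{lem:ind_proj}, then a simple-quotient/submodule analysis using Frobenius reciprocity), but you diverge from the paper at two points, one of which is a genuine improvement and one of which you have correctly flagged as a soft spot.

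The final step is where you take a genuinely different and arguably cleaner route. The paper shows that the \emph{socle} of $\cF(\widehat{P}_{n,\ell})$ is simple via $\Hom(\cF(S'),\cF(\widehat{P}_{n,\ell}))$, deduces indecomposability, and then invokes Fitting's Lemma on a round-trip composition $f\circ g$ to produce the minimal lift. You instead compute $\Hom(\cF(P_S),\cF(S'))$, conclude that $\cF(S)$ is the unique simple quotient with multiplicity one, and read off directly that the kernel of $\cF(P_S)\twoheadrightarrow\cF(S)$ is the unique maximal submodule, which is exactly the superfluous-epimorphism characterization of a projective cover. This avoids Fitting's Lemma entirely. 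To make the inference "unique simple quotient with multiplicity one $\Rightarrow$ indecomposable with unique maximal submodule" airtight, you should note explicitly that $\cF(P_S)$ has finite length; this follows because $\cF$ is exact (not merely right exact, as you wrote) and carries the length-$4$ filtration of $P_S$ by modules whose simple subquotients all induce to local simple $V_{-\frac{1}{2}}(\sl(2|1))$-modules (by Proposition \ref{prop:simple}). Without finite length the head could be empty and the argument would break.

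On locality, the paper simply cites \cite[Thm.~1.4(1)]{CKL}, which says the conformal-weight criterion for $\cF(P_S)$ to be local agrees with that for $\cF(S)$, so locality is immediate once $\cF(S)$ is known to be local. You instead sketch a direct monodromy computation and acknowledge the two obstacles (that $\Rep^0$ need not be extension-closed, and that $\theta=e^{2\pi i L_0}$ has a unipotent part on $\widehat{P}^1_{n,\ell}$). Your claim that the unipotent parts cancel because tensoring by the simple current shifts $L_0$ by a scalar is plausible but is exactly the content that would need to be justified carefully; as stated it is circular, since $\cM_{\widehat{A},\widehat{P}}$, $\theta_{\widehat{A}\boxtimes\widehat{P}}$, and $\theta_{\widehat{P}}$ are related by the balancing equation itself. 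Replacing that sketch with the citation to \cite[Thm.~1.4(1)]{CKL} (or deriving its content from the ribbon structure of $\cO_1^{fin}$) would close the one real soft spot in your argument.
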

\begin{proof}
 By \cite[Thm. 1.4(1)]{CKL}, the conformal weight criterion for $\cF(P_S)$ to be local is the same as that for $\cF(S)$, so $\cF(P_S)$ is an object of $\Rep^0\,V_{-\frac{1}{2}}(\sl(2|1))$ if $\cF(S)$ is, and then $\cF(P_S)$ is projective in $\Rep^0\,V_{-\frac{1}{2}}(\sl(2|1))$ by the preceding lemma. Furthermore, because induction is exact, the canonical surjection $p: P_S\rightarrow S$ induces to a surjection $\cF(p):\cF(P_S)\rightarrow\cF(S)$.

 Now if $S$ is a typical simple $V_1(\gl(1|1))$-module, then $P_S=S$ and it is clear that $\cF(P_S)$ is a projective cover of $\cF(S)$. If $S=\widehat{A}_{n,\ell}$ is atypical  with projective cover $\widehat{P}_{n,\ell}$ in $\cO_1^{fin}$, we need to show that for any surjection $q: P\rightarrow \cF(\widehat{A}_{n,\ell})$ in $\Rep^0\,V_{-\frac{1}{2}}(\sl(2|1))$ with $P$ projective, there is a surjection $f: P\rightarrow\cF(\widehat{P}_{n,\ell})$ such that $q=\cF(p)\circ f$. Indeed, because $P$ and $\cF(\widehat{P}_{n,\ell})$ are both projective, we have maps $f: P\rightarrow\cF(\widehat{P}_{n,\ell})$ and $g: \cF(\widehat{P}_{n,\ell})\rightarrow P$ such that the diagrams
 \begin{equation*}
 \xymatrix{
  & P \ar[ld]_{f} \ar[d]^q \\
  \cF(\widehat{P}_{n,\ell}) \ar[r]_{\cF(p)} & \cF(\widehat{A}_{n,\ell}) \\
  } \qquad\qquad
  \xymatrix{
  & P  \ar[d]^q \\
  \cF(\widehat{P}_{n,\ell}) \ar[ru]^{g} \ar[r]_{\cF(p)} & \cF(\widehat{A}_{n,\ell}) \\
  }
 \end{equation*}
commute. Since induction is exact, $\cF(\widehat{P}_{n,\ell})$ has finite length; so if we can show that $\cF(\widehat{P}_{n,\ell})$ is also indecomposable, then Fitting's Lemma implies that $f\circ g$ is either nilpotent or an isomorphism. It cannot be nilpotent because
\begin{equation*}
 \cF(p)\circ(f\circ g)^N =\cF(p)\neq 0
\end{equation*}
for all $N\in\NN$, so it is an isomorphism. It follows that $f$ is surjective (and $g$ is injective).

It remains to show that $\cF(\widehat{P}_{n,\ell})$ is indecomposable. It is enough to show that its socle is $\cF(\widehat{A}_{n,\ell})$ (and in particular is simple), because then the intersection of any two non-zero submodules in $\cF(\widehat{P}_{n,\ell})$ will contain the socle. By Frobenius reciprocity and Corollary \ref{Okgeneral}, for any simple $V_1(\gl(1|1))$-module $S$ such that $\cF(S)$ is local,
\begin{align*}
 \dim\Hom_{\Rep^0\,V_{-\frac{1}{2}}(\sl(2|1))}(\cF(S),\cF(\widehat{P}_{n,\ell})) & =\sum_{m\in\ZZ}\dim\Hom_{\cO_1^{fin}}(S,\widehat{A}_{m-\varepsilon(m),-2m}\boxtimes\widehat{P}_{n,\ell})\nonumber\\
 & \hspace{-2em}=\left\lbrace\begin{array}{ll}
                 1 & \text{if}\,\, S\cong\widehat{A}_{m-\varepsilon(m),-2m}\boxtimes\widehat{A}_{n,\ell}\,\,\text{for some}\,\,m\in\ZZ \\
                 0 & \text{otherwise}
                \end{array}
                \right. .
\end{align*}
Thus $\cF(S)$ occurs as a submodule of $\cF(\widehat{P}_{n,\ell})$, with multiplicity one, if and only if $\cF(S)\cong\cF(\widehat{A}_{n,\ell})$ (recall \eqref{simplecurrentequiv}). Consequently, $\mathrm{Soc}\,\cF(\widehat{P}_{n,\ell})\cong\cF(\widehat{A}_{n,\ell})$ as required.
\end{proof}

For the vertex operator superalgebra $V_{1}(\sl(2|1))$,
first \eqref{generalextension} gives
\begin{equation}
\frW_{\frac{1}{2}, 1} = \widehat{A}_{0,0} \oplus \bigoplus_{m \in \ZZ_{\geq 1}}(\widehat{A}_{\frac{1}{2}, m}\oplus \widehat{A}_{-\frac{1}{2}, -m})=\bigoplus_{m\in\ZZ} \widehat{A}_{\varepsilon(m),m}.
\end{equation}
Then similar analysis as above gives the simple objects in $\Rep^0\,V_{1}(\sl(2|1))$ and their projective covers:
\begin{prop}
The simple objects of $\Rep^0\,V_{1}(\sl(2|1))$ are of the form $\cF(S)$, where $S$ is either $\widehat{A}_{n,\ell}$ for $n \in \frac{1}{2} + \ZZ$, $\ell \in \ZZ\setminus \{0\}$ or $n \in \ZZ$, $\ell = 0$, or $\widehat{V}_{n,e}$ for $n \in \CC$, $e \notin \ZZ$ such that $n+e \in \frac{1}{2}+\ZZ$. For any simple $V_1(\gl(1|1))$-module $S$ such that $\cF(S)$ is an object of $\Rep^0\,V_{1}(\sl(2|1))$, $\cF(P_S)$ is a projective cover of $\cF(S)$ in $\Rep^0\,V_{1}(\sl(2|1))$, where $P_S$ is a projective cover of $S$ in $\cO_1^{fin}$.
\end{prop}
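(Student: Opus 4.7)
The plan is to proceed in close parallel with the proofs of Propositions \ref{prop:simple} and \ref{prop:proj}, now with the simple current extension
\[
\frW_{\frac{1}{2},1}=\bigoplus_{m\in\ZZ}\widehat{A}_{\varepsilon(m),m}\cong V_1(\sl(2|1))
\]
in place of $\frW_{\frac{1}{2},-2}$. Since $\widehat{A}_{\frac{1}{2},1}$ generates the underlying group of simple currents (one checks $\widehat{A}_{\frac{1}{2},1}^{\boxtimes m}\cong\widehat{A}_{\varepsilon(m),m}$ via Theorem \ref{thm:atyp_atyp_fusion}), triviality of the monodromy with this single generator suffices for locality, and the overall logical structure of the proof is then unchanged.

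The first computational step is the monodromy calculation via the balancing equation
\(
\cM_{W,X}=\theta_{W\boxtimes X}\circ(\theta_W^{-1}\boxtimes\theta_X^{-1})
\)
with $\theta=e^{2\pi i L_0}$ and $\Delta_{n,\ell}=\ell n+\ell^{2}/2$ at level $k=1$. Using Theorem \ref{thm:atyp_atyp_fusion}, one obtains for $\widehat{A}_{n,\ell}$ a monodromy phase congruent to $n\pmod{\ZZ}$ when $\ell=0$ (forcing $n\in\ZZ$) and congruent to $n+\frac{1}{2}\pmod{\ZZ}$ when $\ell\neq 0$ (forcing $n\in\frac{1}{2}+\ZZ$); the split between these two cases arises from the piecewise values of $\varepsilon(\ell,1)=\varepsilon(\ell)+\varepsilon(1)-\varepsilon(\ell+1)$. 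Using Theorem \ref{thm:atyp_typ_fusion}, one gets $\widehat{A}_{\varepsilon(m),m}\boxtimes\widehat{V}_{n,e}\cong\widehat{V}_{n,e+m}$ and monodromy phase $m(n+e-\varepsilon(m))$, which is integral for all $m\in\ZZ$ precisely when $n+e\in\frac{1}{2}+\ZZ$. These are the three conditions listed in the statement.

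Next, to identify all simple objects of $\Rep^0\,V_1(\sl(2|1))$, I would argue that whenever $S$ is a simple $V_1(\gl(1|1))$-module with $\cF(S)$ local, $\cF(S)$ is itself simple: by Remark \ref{rem:useful} the summands $\widehat{A}_{\varepsilon(m),m}\boxtimes S$ for distinct $m$ carry distinct $E_0$-eigenvalues, so \cite[Prop.~4.4]{CKM} applies. Conversely, any simple $X\in\Rep^0\,V_1(\sl(2|1))$ contains some irreducible $\cO_1^{fin}$-submodule $S\subseteq\cG(X)$ by the finite-length argument used in Proposition \ref{prop:simple}, and Frobenius reciprocity then produces a non-zero (hence surjective) map $\cF(S)\to X$ which is an isomorphism by simplicity of $\cF(S)$.

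Finally, the projective-cover assertion is obtained by the same argument as Proposition \ref{prop:proj}: the module $\cF(P_S)$ is local by the conformal-weight criterion \cite[Thm.~1.4(1)]{CKL}, projective in $\Rep\,V_1(\sl(2|1))$ by Lemma \ref{lem:ind_proj}, and surjects onto $\cF(S)$ by exactness of induction. Indecomposability reduces to showing $\mathrm{Soc}\,\cF(P_S)\cong\cF(S)$, which follows from Frobenius reciprocity combined with the projective fusion rules in Corollary \ref{Okgeneral}; Fitting's lemma then completes the projective-cover argument. I expect the main obstacle to be purely bookkeeping: the case analysis for the piecewise-defined cocycle $\varepsilon(\ell,m)$ in the atypical monodromy computation, which is precisely what forces the distinct condition at $\ell=0$. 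Everything else is a mechanical adaptation of the $V_{-\frac{1}{2}}(\sl(2|1))$ case already treated above.
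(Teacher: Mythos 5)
Your proposal is correct and is exactly the ``similar analysis as above'' the paper invokes without spelling out: the same monodromy computation via the balancing equation (with $\widehat{A}_{\varepsilon(m),m}$ replacing $\widehat{A}_{m-\varepsilon(m),-2m}$, and the $\ell=0$ vs.\ $\ell\neq 0$ split in the cocycle $\varepsilon(\ell,m)$ producing the two atypical conditions), the same Frobenius-reciprocity/simple-current argument for identifying simple objects, and the same projectivity/socle/Fitting's-lemma argument for projective covers. The simplification of checking monodromy only against the generator $\widehat{A}_{\frac{1}{2},1}$ is sound since monodromy of a simple object against a simple current is a scalar that is multiplicative under fusion of simple currents.
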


\begin{remark}
As $V_1(\gl(1|1))$-modules,
\[
\cF(\widehat{A}_{n,\ell}) = \bigoplus_{m\in \ZZ}\widehat{A}_{n-\varepsilon(\ell)+\varepsilon(\ell+m), \ell+m},
\]
and
\[
\cF(\widehat{V}_{n,e}) = \bigoplus_{m\in \ZZ}\widehat{V}_{n, e+m}.
\]
Note that the lowest conformal weights of the summands
$\widehat{A}_{n-\varepsilon(\ell)+\varepsilon(\ell+m), \ell+m}$
and $\widehat{V}_{n, e+m}$ are both quadratic functions of $m$ with leading term $\frac{1}{2}m^2$. Thus the induced simple objects $\cF(\widehat{A}_{n,\ell})$ and $\cF(\widehat{V}_{n,e})$ are all lowest-weight modules for $V_{1}(\sl(2|1))$.
\end{remark}

\bigskip

\noindent
Department of Mathematical and Statistical Sciences, University of Alberta, Edmonton, Alberta T6G 2R3, Canada\\
\emph{E-mail:} \texttt{creutzig@ualberta.ca, jinwei2@ualberta.ca}

\medskip

\noindent
Yau Mathematical Sciences Center, Tsinghua University, Beijing 100084, China\\
\emph{E-mail:} \texttt{rhmcrae@tsinghua.edu.cn}

\end{document}